\newtheorem{thm}{Theorem}[section]
\newtheorem{lemma}[thm]{Lemma}
\newtheorem{cor}[thm]{Corollary}
\newtheorem{prop}[thm]{Proposition}
\newtheorem{rem}[thm]{Remark}
\newtheorem{open}[thm]{Open Question}
\numberwithin{equation}{section}
\newenvironment{theorem}[2][Theorem]{\begin{trivlist}
\item[\hskip \labelsep {\bfseries #1}\hskip \labelsep {\bfseries #2.}]}{\end{trivlist}}
\theoremstyle{definition}
\begin{document}

\title[Weighted theory of Toeplitz operators on the Bergman space]{Weighted theory of Toeplitz operators on the Bergman space}

\author{Cody B. Stockdale}
\address{Cody B. Stockdale\hfill\break\indent 
 School of Mathematical Sciences and Statistics\hfill\break\indent 
 Clemson University\hfill\break\indent 
 105 Sikes Hall\hfill\break\indent 
 Clemson, SC 29634 USA}
\email{cbstock@clemson.edu}

\author{Nathan A. Wagner}
\address{Nathan A. Wagner \hfill\break\indent 
 Department of Mathematics \hfill\break\indent 
Brown University \hfill\break\indent 
151 Thayer Street \hfill\break\indent 
 Providence, RI 02912 USA}
\email{nathan\_wagner@brown.edu}
\thanks{The second author is the corresponding author}
\thanks{N. A. Wagner's research was supported in part by National Science Foundation grant DGE \#1745038.}

\begin{abstract}
We study the weighted compactness and boundedness properties of Toeplitz operators on the Bergman space with respect to B\'ekoll\`e-Bonami type weights. Let $T_u$ denote the Toeplitz operator on the (unweighted) Bergman space of the unit ball in $\mathbb{C}^n$ with symbol $u \in L^{\infty}$. 
We characterize the compact Toeplitz operators on the weighted Bergman space $\mathcal{A}^p_\sigma$ for all $\sigma$ in a subclass of the B\'ekoll\`e-Bonami class $B_p$ that includes radial weights and powers of the Jacobian of biholomorphic mappings. Concerning boundedness, we show that $T_u$ extends boundedly on $L^p_{\sigma}$ for $p \in (1,\infty)$ and weights $\sigma$ in a $u$-adapted class of weights containing $B_p$, and we establish analogous weighted endpoint weak-type $(1,1)$ bounds for weights beyond $B_1$. 

\smallskip
\noindent \textbf{MSC:} Primary: 32A50, Secondary: 32A25, 32A36, 42B20 

\smallskip
\noindent \textbf{Keywords:} Toeplitz operators, Bergman projection, Bergman space, B\'ekoll\`e-Bonami weights

\end{abstract}

\maketitle


\section{Introduction}\label{Introduction}

The Bergman space of the unit ball $\mathbb{B}_n\subseteq \mathbb{C}^n$ is defined to be
$$
    \mathcal{A}^2:= L^2 \cap \text{Hol}(\mathbb{B}_n)
$$
where $L^2$ denotes the space of square integrable functions on $\mathbb{B}_n$ with respect to normalized Lebesgue measure $V$ and $\text{Hol}(\mathbb{B}_n)$ represents the holomorphic functions on $\mathbb{B}_n$. Since $\mathcal{A}^2$ is a closed subspace of $L^2$, there exists an orthogonal projection from $L^2$ onto $\mathcal{A}^2$. This map is called the Bergman projection and is given by 
$$
    Pf(z)=\int_{\mathbb{B}_n} K(z,w)f(w)\,dV(w),
$$
where $K$ is the reproducing kernel of $\mathcal{A}^2$. Recall that 
$$
    K(z,w)=K_w(z)=\frac{1}{(1-z\overline{w})^{n+1}},
$$
where $z\overline{w}:=\sum_{j=1}^{n}z_j\overline{w}_j$.

It is clear that the Bergman projection is a bounded operator on $L^2$; in fact, $P$ acts boundedly on $L^p$ for all $p \in (1,\infty)$. Concerning the endpoint behavior, $P$ fails to be bounded on $L^1$, but instead satisfies the weak-type $(1,1)$ inequality: there exists $C>0$ such that
$$
    \|Pf\|_{L^{1,\infty}}:=\sup_{\lambda>0}\lambda|\{z \in \mathbb{B}_n: |Pf(z)|>\lambda\}|\leq C\|f\|_{L^1}
$$
for all $f \in L^1$, where $|A|$ represents the Lebesgue measure of $A\subseteq \mathbb{B}_n$. We write $L^{1,\infty}$ for the space of functions $f$ such that $\|f\|_{L^{1,\infty}}$ as defined above is finite and put $\mathcal{A}^{1,\infty}:=L^{1,\infty}\cap\text{Hol}(\mathbb{B}_n)$. We denote the smallest constant $C$ in the weak-type inequality above by $\|P\|_{L^1\rightarrow L^{1,\infty}}$ and the operator norms of $P$ on $L^p$ by $\|P\|_{L^p\rightarrow L^p}$. See \cites{Z2020} for a recent survey on the $L^p$ theory of $P$ and \cites{DHZZ2001, B198182,M1994,SW2020} for work on the weak-type $(1,1)$ bound for $P$.

Given a function $u$ on $\mathbb{B}_n$, the associated Toeplitz operator, $T_u$, is given by
\begin{align*}
    T_{u}f(z):=P(uf)(z)=\int_{\mathbb{B}_n}K(z,w)u(w)f(w)\,dV(w).
\end{align*}
Appealing to the properties of the Bergman projection, it immediate to see that if $u \in L^{\infty}$, then $T_u$ extends boundedly on $L^p$ for $p \in (1,\infty)$ and from $L^1$ to $L^{1,\infty}$. Also, if $f \in L^1$, then $T_uf$ is holomorphic (see \cite{Ru1980}), and so $T_u$ actually maps $L^p$ to $\mathcal{A}^p$ and $L^1$ to $\mathcal{A}^{1,\infty}$ boundedly. 

Moreover, if the symbol $u$ is ``nicer" than a general $L^{\infty}$ function, then these properties can be strengthened. For example, if the symbol decays in an appropriate sense, then the associated Toeplitz operator may be compact, rather than just bounded. Recall that a linear operator $T$ is compact from a Banach space $\mathcal{X}$ to a topological vector space $\mathcal{Y}$ if $T(\overline{A})$ is precompact in $\mathcal{Y}$ whenever $A$ is a bounded subset of $\mathcal{X}$, and that compact operators are necessarily bounded.

In \cite{AZ1998}, Axler and Zheng characterized the Toeplitz operators with bounded symbols that act compactly on $\mathcal{A}^2$. Their characterization involves the Berezin transform 
$$
    \widetilde{T_u}(z):=\langle T_uk_z,k_z\rangle,
$$
where the $k_z$ are the normalized reproducing kernels for $\mathcal{A}^2$ given by 
$$
    k_z(w)=\frac{(1-|z|^2)^{\frac{n+1}{2}}}{(1-\overline{z}w)^{n+1}}.
$$ 
The work of Axler and Zheng originally addressed the one dimensional situation and has since seen many generalizations, see \cites{E1999, MSW2013,IMW2015,S2007,WX2021}. In \cite{E1999}, Engli\v{s} obtained a generalization of Axler and Zheng's result for Bergman spaces of bounded symmetric domains in $\mathbb{C}^n$, which includes the unit ball. A version of the result is stated as follows. 
\begin{theorem}{A} \label{AZ}\emph{
Let $u \in L^{\infty}$ and $p \in (1,\infty)$. Then $T_u$ acts compactly on $\mathcal{A}^p$ if and only if $\widetilde{T_u}(z) \rightarrow 0$ as $|z|\rightarrow 1^-$.
}
\end{theorem}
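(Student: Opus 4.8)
The plan is to prove the two implications separately; the forward direction (compactness $\Rightarrow$ vanishing Berezin transform) is soft, while the converse is the real content. For necessity, I would exploit that suitably normalized reproducing kernels converge weakly to zero at the boundary. For the conjugate exponent $p'$, set $k_z^{(p)}(w) := (1-|z|^2)^{(n+1)/p'} K(w,z)$; the Forelli--Rudin integral estimates give $\|k_z^{(p)}\|_{\mathcal{A}^p} \approx 1$ uniformly in $z \in \mathbb{B}_n$. For any bounded holomorphic $g$ the reproducing property gives $\langle k_z^{(p)}, g\rangle = (1-|z|^2)^{(n+1)/p'}\,\overline{g(z)} \to 0$ as $|z|\to 1^-$, and since such $g$ are dense in $(\mathcal{A}^p)^* = \mathcal{A}^{p'}$ while the $k_z^{(p)}$ stay bounded, $k_z^{(p)} \rightharpoonup 0$ in $\mathcal{A}^p$. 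If $T_u$ is compact this forces $\|T_u k_z^{(p)}\|_{\mathcal{A}^p} \to 0$. Finally, using $P k_z^{(p')} = k_z^{(p')}$ one checks the exact identity $\widetilde{T_u}(z) = \langle T_u k_z^{(p)}, k_z^{(p')}\rangle$ (the normalizing powers multiply to $(1-|z|^2)^{n+1}$, reconstituting $|k_z|^2$), whence H\"older yields $|\widetilde{T_u}(z)| \le \|T_u k_z^{(p)}\|_{\mathcal{A}^p}\,\|k_z^{(p')}\|_{\mathcal{A}^{p'}} \to 0$.

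For sufficiency, assume now $\widetilde{T_u}(z)\to 0$. I would first observe that a symbol supported in a compact $E \subset \mathbb{B}_n$ gives a compact $T_u$: if $\|f_k\|_{\mathcal{A}^p}\le 1$ then subharmonicity bounds the $f_k$ uniformly on $E$, a normal-families subsequence converges uniformly on $E$, so $uf_{k_j} \to uf$ in $L^p$ and $T_u f_{k_j} = P(uf_{k_j}) \to P(uf)$ in $\mathcal{A}^p$ by boundedness of $P$. Splitting $u = u\chi_{\{|w|\le r\}} + u\chi_{\{|w|>r\}}$ kills the first piece modulo compacts, so the whole problem reduces to the essential-norm estimate $\|T_u\|_{e} \lesssim \limsup_{|z|\to 1^-}|\widetilde{T_u}(z)|$.

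Proving this bound is where the work lies, and I expect it to be the main obstacle: the Berezin transform is an average of $u$ against the probability measure $|k_z|^2\,dV$, and for a complex symbol such averaging can a priori destroy information, so one must use the Toeplitz structure to recover it. The approach I would take is a localization in the Bergman metric: fix a covering of $\mathbb{B}_n$ by balls $D(z_j,R)$ of bounded overlap with a subordinate partition of unity $\{\phi_j\}$, write $T_u \approx \sum_j T_{u\phi_j}$, and compare each $T_{u\phi_j}$ to the coefficient $\widetilde{T_u}(z_j)$ times a normalized rank-one model built from $k_{z_j}^{(p)}$. Two ingredients control the errors: over a ball of fixed radius $R$ the normalized kernel is nearly constant, so the oscillation term is $O(R)$ uniformly; and the off-diagonal pieces, governed by integrals of $|K(z,w)|$, decay in the Bergman distance and are summed by a Schur test using the Forelli--Rudin estimates. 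Letting the balls run to the boundary makes the coefficients $\widetilde{T_u}(z_j)$ small, and sending $R\to 0$ removes the oscillation, delivering the essential-norm bound and hence compactness.

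Since the pairing above is genuinely between $\mathcal{A}^p$ and $\mathcal{A}^{p'}$, the localization argument runs directly for every $p \in (1,\infty)$ provided the kernel estimates are stated uniformly; alternatively one may first settle the case $p=2$, where the model operators are literally rank one and the Schur test is cleanest, and then transfer to general $p$ by an interpolation theorem for compactness (of Krasnoselskii type), interpolating the compactness of $T_u$ on $\mathcal{A}^2$ against the boundedness of $P$ on the full $L^p$ scale.
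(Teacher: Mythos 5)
You should know at the outset that the paper does not prove Theorem A at all: it is quoted as a known result of Axler--Zheng (for $p=2$ on the disk) and Engli\v{s} (for bounded symmetric domains), so there is no internal proof to compare against; your proposal has to stand on its own. Your necessity direction does stand: the normalization $k_z^{(p)}$, the norm estimate $\|k_z^{(p)}\|_{\mathcal{A}^p}\approx 1$, the weak null convergence, the identity $\widetilde{T_u}(z)=\langle T_uk_z^{(p)},k_z^{(p')}\rangle$, and H\"older's inequality are exactly the standard argument (it is also the argument the paper itself runs inside the proof of Corollary \ref{ContinuousLpCompactness}, $(a)\Rightarrow(d)$). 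The reduction of sufficiency to the essential norm bound $\|T_u\|_e\lesssim\limsup_{|z|\to 1^-}|\widetilde{T_u}(z)|$, the compactness of Toeplitz operators with compactly supported symbol, and the final interpolation remark are also sound.

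The genuine gap is in your proof of the essential norm bound, at exactly the step you flag as ``where the work lies.'' You compare $T_{u\phi_j}$ with $\widetilde{T_u}(z_j)$ times a rank-one model and claim the error is $O(R)$ because the normalized kernel is nearly constant on $D(z_j,R)$. But the error operator is (essentially) the Toeplitz operator with symbol $\bigl(u-\widetilde{T_u}(z_j)\bigr)\phi_j$, and its operator norm is governed by the oscillation of the \emph{symbol} $u$ on $D(z_j,R)$, not by the oscillation of the kernel or of the holomorphic test functions. A bounded symbol can oscillate arbitrarily fast inside every Bergman ball; for such $u$ the operator $T_{u\phi_j}$ is small in norm only because of \emph{cancellation}, and nothing in your two ``ingredients'' detects that cancellation. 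The hypothesis only gives smallness of the global averages $\int_{\mathbb{B}_n}u|k_z|^2\,dV$; smallness of these does not elementarily yield smallness of partial averages $\int_{D(z_j,R)}u|k_{z_j}|^2\,dV$ or of local Toeplitz norms, since the cancellation in the Berezin integral can occur across the whole ball rather than within each small piece. Said differently, your scheme needs to upgrade the vanishing of the diagonal matrix elements $\langle T_uk_z,k_z\rangle$ to the vanishing of $\|T_uk_z\|_{\mathcal{A}^2}$ (equivalently $\widetilde{T_u^*T_u}(z)\to 0$); this upgrade is the actual content of the Axler--Zheng/Engli\v{s} theorem, and every known proof uses machinery absent from your sketch --- the M\"obius translations $U_zSU_z$ together with weak-$*$ limits of translated symbols and the \emph{injectivity} of the Berezin transform (Axler--Zheng, Engli\v{s}), or Su\'arez's $m$-Berezin transforms $B_mu$ with the approximation of $T_u$ by $T_{B_mu}$. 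Once one of those inputs is supplied, your covering/Schur-test localization (as in Isralowitz--Mitkovski--Wick) and the compactness-interpolation transfer to all $p\in(1,\infty)$ do finish the proof; without it, the argument is circular, since the asserted $O(R)$ error bound is essentially a localized restatement of the theorem being proved.
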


Recent attention has been given to weighted bounds for the Bergman projection. We say that a function $\sigma$ on $\mathbb{B}_n$ is a weight if it is locally integrable and positive almost everywhere. Given a weight $\sigma$ and $p \in [1,\infty)$, we denote by $L^p_{\sigma}$ the space of functions $f$ for which 
$$
    \|f\|_{L^p_{\sigma}}^p:=\int_{\mathbb{B}_n}|f|^p\sigma\,dV<\infty
$$
and write $L^{1,\infty}_{\sigma}$
for the space of functions $f$ for which 
$$
    \|f\|_{L^{1,\infty}_{\sigma}}:=\sup_{\lambda >0}\lambda\sigma(\{z\in\mathbb{B}_n : |f(z)|>\lambda\})<\infty,
$$
where $\sigma(A)$ denotes $\int_A \sigma \,dV$for $A\subseteq\mathbb{B}_n$. We write $\mathcal{A}^p_{\sigma}$ for the space of holomorphic $f \in L^p_{\sigma}$ and and $\mathcal{A}^{1,\infty}_{\sigma}$ for the space of holomorphic $f \in L^{1,\infty}_{\sigma}$. 

Extending the work of B\'ekoll\`e and Bonami from \cite{BB1978}, in \cite{B198182}, B\'ekoll\`e characterized the weights $\sigma$ for which the Bergman projection acts boundedly on $L^p_{\sigma}$ for $p \in (1,\infty)$ and from $L^1_{\sigma}$ to $L^{1,\infty}_{\sigma}$. Precisely, the following was proved. 
\begin{theorem}{B} 
\emph{The Bergman projection acts boundedly on $L^p_{\sigma}$ for $p\in (1,\infty)$ if and only if 
$$
    [\sigma]_{B_p}:=\sup_{z \in \mathbb{B}_n} \langle \sigma\rangle_{\mathcal{T}_z}\langle \sigma^{1-p'}\rangle_{\mathcal{T}_z}^{p-1}<\infty
$$
and $P$ acts boundedly from $L^1_{\sigma}$ to $L^{1,\infty}_{\sigma}$ if and only if 
$$
    [\sigma]_{B_1}:=\sup_{z\in\mathbb{B}_n}\langle \sigma\rangle_{\mathcal{T}_z}\|\sigma^{-1}\|_{L^{\infty}(\mathcal{T}_z)}<\infty.
$$}
\end{theorem}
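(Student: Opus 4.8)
The plan is to treat necessity and sufficiency separately, with both directions organized around the geometry of the tents $\mathcal{T}_z$. Necessity is a soft testing argument, while sufficiency rests on dominating $P$ by a positive dyadic operator adapted to the Bergman metric and then importing the classical $A_p$/$A_1$ weighted theory.

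For necessity, I would exploit the pointwise lower bound $|K(z',w)| \gtrsim (1-|z|^2)^{-(n+1)} \approx |\mathcal{T}_z|^{-1}$ valid for $z',w \in \mathcal{T}_z$, together with the fact that the phase of $K$ stays essentially constant there. When $p \in (1,\infty)$ and $P$ is bounded on $L^p_\sigma$ with norm $N$, testing on $f = \sigma^{1-p'}\mathbf{1}_{\mathcal{T}_z}$ gives $\|f\|_{L^p_\sigma}^p = |\mathcal{T}_z|\langle\sigma^{1-p'}\rangle_{\mathcal{T}_z}$, while $|Pf| \gtrsim \langle\sigma^{1-p'}\rangle_{\mathcal{T}_z}$ on $\mathcal{T}_z$; comparing the two sides of the boundedness inequality and cancelling a common factor yields $\langle\sigma\rangle_{\mathcal{T}_z}\langle\sigma^{1-p'}\rangle_{\mathcal{T}_z}^{p-1} \lesssim N^p$ uniformly in $z$, that is, $[\sigma]_{B_p} < \infty$. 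For the endpoint, I would test the weak-type inequality on $f = \mathbf{1}_E$ for arbitrary measurable $E \subseteq \mathcal{T}_z$: the lower bound forces $|Pf| \gtrsim |E|/|\mathcal{T}_z|$ throughout $\mathcal{T}_z$, so choosing $\lambda$ just below this value gives $\frac{|E|}{|\mathcal{T}_z|}\sigma(\mathcal{T}_z) \lesssim N\sigma(E)$; rearranging and letting $E$ concentrate where $\sigma$ approaches its essential infimum on $\mathcal{T}_z$ produces $\langle\sigma\rangle_{\mathcal{T}_z}\|\sigma^{-1}\|_{L^\infty(\mathcal{T}_z)} \lesssim N$, that is, $[\sigma]_{B_1}<\infty$.

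For sufficiency, the first reduction is to pass to the positive operator $P^+f(z) := \int_{\mathbb{B}_n} |K(z,w)|\,f(w)\,dV(w)$, since $|Pf| \le P^+|f|$ pointwise, so it suffices to bound $P^+$. The core step is a dyadic model: fixing a Christ-type dyadic system of tents $\{\mathcal{Q}\}$ on $\mathbb{B}_n$ whose members are comparable to the $\mathcal{T}_z$, I would establish the domination
$$
    P^+f(z) \lesssim \sum_{\mathcal{Q}} \langle f\rangle_{\mathcal{Q}}\,\mathbf{1}_{\mathcal{Q}}(z)
$$
over a sparse subfamily. Granting this, the weighted $L^p_\sigma$ estimate for the positive dyadic operator on the right follows from $[\sigma]_{B_p}<\infty$ exactly as in the Euclidean $A_p$ theory: dualize against $g \in L^{p'}_{\sigma^{1-p'}}$, insert $\sigma$ and $\sigma^{1-p'}$ into the averages, and dominate the resulting bilinear sum by the dyadic maximal functions of $f$ and $g$ relative to these measures, whose boundedness on $L^p(\sigma)$ and $L^{p'}(\sigma^{1-p'})$ is uniform. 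The weak-type $(1,1)$ bound under $[\sigma]_{B_1}<\infty$ comes from the same domination combined with a Calder\'on--Zygmund stopping-time decomposition, using the $B_1$ quantity to transfer Lebesgue level-set estimates to $\sigma$.

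The main obstacle is the sparse domination step. The Bergman kernel carries no cancellation and is singular along the boundary $\partial\mathbb{B}_n$ rather than on a diagonal, so it is not a standard Calder\'on--Zygmund kernel and the domination must be read off from the genuine tent geometry. Concretely, I would split the integral defining $P^+f(z)$ into Bergman-metric annuli around $z$, estimate $|K(z,w)|$ on each annulus via $|1-z\overline{w}| \approx (1-|z|)+(1-|w|)+|1-\tfrac{z}{|z|}\overline{\tfrac{w}{|w|}}|$, and sum the resulting geometric series to land on averages over a single sparse family of tents, with all constants independent of $\sigma$. Everything downstream of this reduction is the routine transcription of the $A_p$ and $A_1$ arguments to the Bergman setting.
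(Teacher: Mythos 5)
The paper never proves Theorem B: it is quoted as B\'ekoll\`e's theorem from \cite{B198182} and used as a black box, so your proposal has to be judged against the classical argument and against what this paper shows is (and is not) achievable by dyadic methods. Your sufficiency argument for $p\in(1,\infty)$ is correct and is exactly the modern route: the pointwise domination $|Pf|\lesssim\sum_{K\in\mathcal{D}}\langle|f|\rangle_{\widehat{K}}\chi_{\widehat{K}}$ is the bound from \cite{RTW2017} recorded as Proposition \ref{SparseToeplitz}, and the weighted $L^p_\sigma$ estimate for that positive dyadic operator under $[\sigma]_{B_p}<\infty$ is the case $u\equiv1$ of the proof of Theorem \ref{ToeplitzLpBoundedness} (together with Proposition 2.3 to pass between tent and dyadic characteristics). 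The other two pieces of your plan contain genuine gaps.

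\emph{Necessity.} Your key claim that ``the phase of $K$ stays essentially constant'' for $z',w\in\mathcal{T}_z$ is false. Already in the disk with $z=r$ real, write $z'=1-\rho_1e^{i\phi_1}$, $w=1-\rho_2e^{i\phi_2}$: the tent condition only forces $\rho_i<1-r$, and membership in the disk only forces $\cos\phi_i>\rho_i/2$, so one may take $\phi_1\approx\pi/2$ and $\phi_2\approx-\pi/2$; then $1-z'\overline{w}\approx i(\rho_1+\rho_2)$ and $K(z',w)=(1-z'\overline{w})^{-2}$ is essentially \emph{negative} real, whereas for $z',w$ near the top of the tent it is essentially positive real. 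In general $\arg K$ sweeps an interval of length about $(n+1)\pi$ as $z',w$ range over $\mathcal{T}_z$, so $\operatorname{Re}K$ changes sign for every $n\geq 1$, and the asserted lower bounds $|Pf|\gtrsim\langle\sigma^{1-p'}\rangle_{\mathcal{T}_z}$ and $|P\chi_E|\gtrsim|E|/|\mathcal{T}_z|$ on $\mathcal{T}_z$ do not follow: cancellation can destroy them. Manufacturing the missing positivity is the crux of the classical proof. Two standard repairs: (i) test with kernel-weighted functions such as $f=K_z\,\sigma^{1-p'}\chi_{\mathcal{T}_z}$, where $K_z(w)=\overline{K(z,w)}$ is holomorphic, so that $Pf(\zeta)=\int_{\mathcal{T}_z}K(\zeta,w)\overline{K(z,w)}\,\sigma^{1-p'}dV(w)$ has positive integrand at $\zeta=z$ and, by continuity of the phase, on a fixed Bergman ball at the top of the tent; this yields the hybrid bound $\langle\sigma\rangle_{B_\beta(z,r_0)}\langle\sigma^{1-p'}\rangle_{\mathcal{T}_z}^{p-1}\lesssim\|P\|^{p}$, which combined with its dual estimate and H\"older's inequality gives $[\sigma]_{B_p}\lesssim\|P\|^{2p}$; or (ii) B\'ekoll\`e's device of testing on pairs of comparable tents separated by a large multiple of their radius, where the kernel genuinely has almost constant phase, followed by a covering argument to recover the diagonal condition. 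Either way, this is a missing idea, not a routine detail.

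\emph{Endpoint sufficiency.} The plan ``sparse domination plus a Calder\'on--Zygmund stopping time'' cannot deliver the weak-type $(1,1)$ bound for general $\sigma\in B_1$, and this paper itself is the evidence. Once $P$ is replaced by its positive dyadic majorant, all cancellation is gone; the known endpoint estimates for that majorant---Theorem \ref{SparseGeneralWeakType}, following \cite{DsLR2016}---control $\|T_uf\|_{L^{1,\infty}_\sigma}$ by $\|f\|_{L^1_{M_{u,\Phi}\sigma}}$ for a Young function $\Phi$ strictly stronger than $t$, so returning to $\|f\|_{L^1_\sigma}$ requires $M_{u,\Phi}\sigma\lesssim\sigma$, i.e.\ a reverse H\"older hypothesis on top of $B_1$; that is precisely why Theorem \ref{ToeplitzL1Boundedness} assumes $\sigma\in uB_1\cap\text{RH}_r$. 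In the Euclidean $A_1$ theory this extra hypothesis is free because $A_\infty$ weights self-improve, but B\'ekoll\`e--Bonami weights admit no automatic reverse H\"older inequality (this is the point of \cite{APR2019} and of the RH assumptions throughout this paper). A direct CZ decomposition does not rescue the dyadic route either: decomposing along maximal dyadic tents gives the right exceptional-set bound but no $L^\infty$ control of the good part, because tents do not shrink to points (no Lebesgue differentiation), while decomposing along interior pseudo-balls requires controlling $\langle\sigma\rangle$ over balls that are not tents, which $B_1$ does not provide. As the remark following Corollary \ref{ToeplitzL1BoundednessCorollary} records, the only proofs of the weak-type bound for all of $B_1$ (\cite{B198182}, \cite{SW2020}) work with the Bergman kernel itself and exploit its smoothness; so your endpoint step is not a gap in exposition but a failure of the chosen approach.
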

Above, $\langle \sigma\rangle_{A}$ represents the average $\frac{\sigma(A)}{|A|}$ for $A\subseteq \mathbb{B}_n$, and $\mathcal{T}_z$ denotes the Carleson tent over $z \in \mathbb{B}_n$ given by
$$
    \mathcal{T}_z:=\left\{w \in \mathbb{B}_n: \left| 1 - \overline{w}\frac{z}{|z|}\right|<1-|z|\right\}.
$$
If $z=0,$ we interpret $\mathcal {T}_z$ to be $\mathbb{B}_n.$
If $[\sigma]_{B_p}<\infty$, we write $\sigma \in B_p$ and refer to $[\sigma]_{B_p}$ as the B\'ekoll\`e-Bonami characteristic of $\sigma$. See \cites{RTW2017,WW2020,HWW20201,HWW20202} for recent weighted $L^p_{\sigma}$ theory of $P$ and to \cite{SW2020} for recent weighted endpoint weak-type theory of $P$.

Appealing to the weighted estimates of the Bergman projection, it is again immediate to see that if $u \in L^{\infty}$, then $T_u$ extends boundedly on $L^p_{\sigma}$ for $p \in (1,\infty)$ and all $\sigma \in B_p$, and from $L^1_{\sigma}$ to $L^{1,\infty}_{\sigma}$ for all $\sigma \in B_1$. 

The purpose of this paper is to investigate the weighted properties of Toeplitz operators on the Bergman space of $\mathbb{B}_n$. We consider both the perspective of these operators acting on $L^p_\sigma$ as well as $\mathcal{A}^p_\sigma.$ The latter 
perspective has been typical in the study of compactness of Toeplitz operators, however Toeplitz operators with decaying symbols have also been 
considered as operators that ``improve" $L^p$ spaces, see \cite{CM2006}. By viewing Toeplitz operators as integral operators from a harmonic analysis perspective, we study their boundedness and compactness on $L^p_\sigma$ as well as $\mathcal{A}^p_{\sigma}$. 

We investigate versions of Theorem A with B\'ekoll\`e-Bonami weights and Theorem B for Toeplitz operators. We develop improvements for Toeplitz operators in the following ways: 
\begin{enumerate}
\addtolength{\itemsep}{0.2cm}
    \item we characterize the compact Toeplitz operators on $\mathcal{A}_{\sigma}^p$ for all $p \in (1,\infty)$ and all $\sigma$ in the intersection of $B_p$ and a reverse H\"older class,
    \item we show that if $u$ decays at the boundary of $\mathbb{B}_n$, then $T_u$ extends boundedly on $L^p_{\sigma}$ for $p \in (1,\infty)$ and $\sigma$ in a class of weights properly containing $B_p$, and
    \item we prove that if $u$ decays at the boundary of $\mathbb{B}_n$, then $T_u$ extends boundedly from $L^1_{\sigma}$ to $L^{1,\infty}_{\sigma}$ for $\sigma$ in a class of weights properly containing the intersection of $B_1$ and a reverse H\"older class.
\end{enumerate}  

In previous research considering Toeplitz operators on weighted Bergman spaces, the Toeplitz operators are defined using the weighted Bergman projection, and the Berezin transform is defined using the normalized kernels from the weighted space. Boundedness and compactness results of this kind have been obtained for Toeplitz operators with nonnegative symbols, see \cites{Ch2013,Co2010,CSZ2018}. In contrast, our results concern Toeplitz operators defined using reproducing kernels from the unweighted Bergman space, akin to how the unweighted Bergman projection has been studied on $L^p_{\sigma}$ by B\'ekoll\`e and Bonami. 

We state and discuss our main results in the following Subsection \ref{MainResults}, and we discuss their straightforward extensions and some open problems in Subsection \ref{Extensions}.

The authors thank Walton Green, Mishko Mitkovski, Brett Wick, and Yunus Zeytuncu for their valuable discussions.

\subsection{Main Results}\label{MainResults}
Our first stated result is likely known to experts, however, we include it here for motivation and completeness.
\begin{thm}\label{ToeplitzLpCompactness}
Let $u \in L^{\infty}$ and $p \in (1,\infty)$. If $u(z)\rightarrow 0$ as $|z|\rightarrow 1^-$, then $T_u$ acts compactly on $L^p_{\sigma}$ for all $\sigma \in B_p$ and $T_u$ acts compactly from $L^1_{\sigma}$ to $L^{1,\infty}_{\sigma}$ for all $\sigma \in B_1$.
\end{thm}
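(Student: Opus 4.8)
The plan is to exploit the boundary decay of $u$ by splitting the symbol into an interior piece of compact support and a boundary piece of small sup-norm. Fix $r \in (0,1)$ and write $u = u_r + v_r$, where $u_r := u\chi_{\{|z| \le r\}}$ and $v_r := u\chi_{\{|z| > r\}}$, so that $T_u = T_{u_r} + T_{v_r}$. Since $\|v_r\|_{L^\infty} = \sup_{|z| > r}|u(z)| \to 0$ as $r \to 1^-$ by hypothesis, the factorization $T_{v_r} = P M_{v_r}$ together with Theorem B gives $\|T_{v_r}\|_{L^p_\sigma \to L^p_\sigma} \le \|P\|_{L^p_\sigma \to L^p_\sigma}\|v_r\|_{L^\infty} \to 0$ and likewise $\|T_{v_r}\|_{L^1_\sigma \to L^{1,\infty}_\sigma} \le \|P\|_{L^1_\sigma \to L^{1,\infty}_\sigma}\|v_r\|_{L^\infty} \to 0$. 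Because the compact operators form a closed subspace of the bounded operators (a fact that persists into the quasi-Banach target $L^{1,\infty}_\sigma$ via the quasi-triangle inequality $\|f+g\|_{L^{1,\infty}_\sigma} \le 2(\|f\|_{L^{1,\infty}_\sigma}+\|g\|_{L^{1,\infty}_\sigma})$ and the completeness of weak $L^1$), it suffices to prove that each $T_{u_r}$ is compact; then $T_u$ is an operator (quasi-)norm limit of compact operators and hence compact.

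To see that $T_{u_r}$ is compact, I would argue by normal families. Let $\{f_j\}$ be bounded in $L^p_\sigma$ (respectively $L^1_\sigma$). First note that $\sigma(\mathbb{B}_n) < \infty$, since taking $z = 0$ in the definition of $B_p$, where $\mathcal{T}_0 = \mathbb{B}_n$, forces $\langle\sigma\rangle_{\mathbb{B}_n} < \infty$. By H\"older's inequality and the local integrability of $\sigma^{1-p'}$ one checks $\int_{\{|w| \le r\}}|f_j|\,dV \le \|f_j\|_{L^p_\sigma}\left(\int_{\{|w|\le r\}}\sigma^{1-p'}\,dV\right)^{1/p'}$, uniformly in $j$; in the endpoint case the analogous bound uses $\sigma^{-1} \in L^\infty(\mathbb{B}_n)$, which again follows from $\sigma \in B_1$ and $\mathcal{T}_0 = \mathbb{B}_n$. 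Since $|K(z,w)| \le (1 - r)^{-(n+1)}$ whenever $|w| \le r$, this yields the uniform pointwise bound $|T_{u_r}f_j(z)| \le C(1-r)^{-(n+1)}$ on all of $\mathbb{B}_n$. As $u_r f_j \in L^1$, the functions $T_{u_r}f_j$ are holomorphic, so by Montel's theorem they form a normal family, and a subsequence converges locally uniformly to some holomorphic $g$, which inherits the same pointwise bound.

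It remains to upgrade locally uniform convergence to convergence in $L^p_\sigma$ (respectively $L^{1,\infty}_\sigma$), and this is the step requiring care near $\partial\mathbb{B}_n$. Splitting $\mathbb{B}_n = \{|z| \le \rho\} \cup \{\rho < |z| < 1\}$, the integral over the compact inner region tends to $0$ by local uniform convergence and $\sigma(\{|z| \le \rho\}) < \infty$, while the outer region is controlled uniformly by the pointwise bound together with $\sigma(\{\rho < |z| < 1\}) \to 0$ as $\rho \to 1^-$, which holds because $\sigma(\mathbb{B}_n) < \infty$; choosing $\rho$ near $1$ and then letting the subsequence index grow shows $T_{u_r}f_{j_k} \to g$ in $L^p_\sigma$. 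For the endpoint the same two-region split is run at the level of the distribution function: for each $\lambda$ the inner contribution vanishes once $k$ is large, up to an error controlled by $\epsilon_k \sigma(\mathbb{B}_n)$ with $\epsilon_k := \sup_{|z|\le\rho}|T_{u_r}f_{j_k}-g| \to 0$, while the outer contribution is at most a constant times $\sigma(\{\rho < |z| < 1\})$, uniformly in $\lambda$; taking the supremum over $\lambda$ and then $\rho \to 1^-$ gives $\|T_{u_r}f_{j_k} - g\|_{L^{1,\infty}_\sigma} \to 0$. The main obstacle is precisely this passage from interior convergence to global (quasi-)norm convergence; the key realization that makes it routine is that every B\'ekoll\`e-Bonami weight has finite total mass on $\mathbb{B}_n$, so the boundary shells carry vanishingly small weight and the uniform pointwise bound on $T_{u_r}f_j$ suffices to absorb them.
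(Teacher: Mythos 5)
Your proof is correct, and although it relies on the same decomposition and the same core estimates as the paper, the overall architecture is genuinely different. First note that your symbol splitting coincides with the paper's splitting of the input function, since $T_u(f\chi_{r\mathbb{B}_n}) = T_{u\chi_{r\mathbb{B}_n}}f$; both arguments then handle the boundary piece via Theorem B together with $\|u\|_{L^{\infty}(r\mathbb{B}_n^c)} \rightarrow 0$, and the interior piece via the kernel bound $|K(z,w)| \leq (1-r)^{-(n+1)}$ for $|w| \leq r$, the finiteness of $\sigma(\mathbb{B}_n)$ and of $\sigma^{1-p'}(\mathbb{B}_n)$ (or of $\|\sigma^{-1}\|_{L^{\infty}}$ when $p=1$), and Montel's theorem. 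The difference is where compactness is extracted. The paper first establishes Riesz-Kolmogorov-type characterizations of precompactness (Theorem \ref{RieszKolmogorov} and Theorem \ref{RKWeakType}, packaged as Corollary \ref{OperatorCompactnessTest}) and then verifies the resulting uniform tail-decay criterion for the image of the unit ball under $T_u$; the normal-families argument is absorbed into the proofs of those characterizations. You instead prove outright that each truncation $T_{u_r}$ is compact --- Montel plus your global pointwise bound and the vanishing of $\sigma$ on boundary shells --- and realize $T_u$ as an operator-(quasi-)norm limit of compact operators. Your route is more elementary and self-contained: it needs only the closedness of the compact operators under norm limits, and you correctly flag the one delicate point, namely that this closedness persists for the quasi-Banach target $L^{1,\infty}_{\sigma}$ (the fixed quasi-triangle constant $2$ and the completeness of weak $L^1$ let the usual diagonal argument run). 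What the paper's route buys is the Riesz-Kolmogorov theorems themselves, which are of independent interest, give a criterion applicable to operators not naturally presented as norm limits of compacts, and isolate the holomorphy input in one reusable place; your truncation also quietly trivializes a hypothesis of Theorem \ref{RKWeakType} (uniform boundedness on compact subsets), which in the paper's endpoint argument must be checked separately for $T_u$ itself via the $B_1$ condition.
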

\noindent Our proof of Theorem \ref{ToeplitzLpCompactness} relies on compactness characterizations for subsets of $\mathcal{A}_{\sigma}^p$ and $\mathcal{A}^{1,\infty}_{\sigma}$. 
In certain situations, the condition of Theorem \ref{ToeplitzLpCompactness} is implied by the vanishing Berezin transform condition of Theorem A. For instance, we have the following corollary.
\begin{cor}\label{ContinuousLpCompactness}
Let $u \in L^{\infty}$ be continuous on $\overline{\mathbb{B}}_n\setminus K$ for some compact $K \subseteq \mathbb{B}_n$. The following are equivalent:
\begin{enumerate}[label=(\alph*)]
\addtolength{\itemsep}{0.2cm}
    \item $T_u$ acts compactly on $L^p$ for some $p \in (1,\infty)$, 
    \item $T_u$ acts compactly on $L^p_{\sigma}$ for all $p \in (1,\infty)$ and all $\sigma \in B_p$,
    \item $T_u$ acts compactly from $L^1_{\sigma}$ to $L^{1,\infty}_{\sigma}$ for all $\sigma \in B_1$, and 
    \item $\widetilde{T_u}(z)\rightarrow 0$ as $|z| \rightarrow 1^-$. 
\end{enumerate}
\end{cor}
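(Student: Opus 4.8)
The plan is to show that, for a symbol continuous up to the boundary off a compact set, all four conditions are equivalent to the single scalar statement that $u(z)\to 0$ as $|z|\to 1^-$, and then to route (a)--(d) through this statement in a closed cycle. The constant weight $\sigma\equiv 1$ will serve as the bridge between the weighted conditions (a)--(c) and the unweighted results at our disposal, since $1\in B_p$ for every $p\in(1,\infty)$ and $1\in B_1$, with $L^p_1=L^p$ and $\mathcal{A}^p_1=\mathcal{A}^p$. Concretely, I would prove $(b)\Rightarrow(a)\Rightarrow(d)$, then the continuity step $(d)\Rightarrow[u\to0]$, then invoke Theorem \ref{ToeplitzLpCompactness} for $[u\to0]\Rightarrow(b)$ and $[u\to0]\Rightarrow(c)$, and finally close the loop with $(c)\Rightarrow(d)$.

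The easy links come first. The implication $(b)\Rightarrow(a)$ is immediate, and $[u\to0]\Rightarrow(b),(c)$ is exactly Theorem \ref{ToeplitzLpCompactness}. For $(a)\Rightarrow(d)$ I would specialize to $\sigma\equiv 1$: condition (a) then gives that $T_u$ is compact on $L^p$ for some $p\in(1,\infty)$, and since $T_u$ maps $L^p$ into the closed subspace $\mathcal{A}^p$ (as $T_uf$ is holomorphic), its restriction is compact on $\mathcal{A}^p$. Theorem A then yields $\widetilde{T_u}(z)\to 0$, which is (d). Since (d) is independent of $p$, this will also upgrade the ``some $p$'' of (a) to ``all $p$'' once the cycle is complete.

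The first substantive step is $(d)\Rightarrow[u\to0]$, and this is precisely where the continuity hypothesis enters. Writing the Berezin transform as $\widetilde{T_u}(z)=\int_{\mathbb{B}_n}u(w)\,|k_z(w)|^2\,dV(w)$, I would use that $|k_z|^2\,dV$ is a probability measure concentrating at $z/|z|$ as $|z|\to1^-$. Because $u$ is continuous on $\overline{\mathbb{B}}_n\setminus K$ and hence uniformly continuous near $\partial\mathbb{B}_n$, a standard approximate-identity estimate --- splitting the integral into a small ball about $z$, where $u(w)-u(z)$ is small, and its complement, where $|k_z|^2\,dV$ carries vanishing mass --- gives $\widetilde{T_u}(z)-u(z)\to 0$. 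Thus $\widetilde{T_u}(z)\to 0$ if and only if $u(z)\to 0$, establishing $(d)\Leftrightarrow[u\to0]$ and, with the previous paragraph, the equivalence of (a), (b), (d), and the scalar condition.

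What remains, and what I expect to be the main obstacle, is $(c)\Rightarrow(d)$, since the weak-type endpoint carries only a quasinorm and weak-type compactness is delicate. I would argue the contrapositive using $\sigma\equiv 1\in B_1$: if (d) fails then, by continuity, $|u|\geq\delta>0$ on some relatively open $U\subseteq\partial\mathbb{B}_n$, so one may choose separated boundary points $\zeta_k\in U$ and interior points $z_k=r_k\zeta_k\to\partial\mathbb{B}_n$. Testing against the $L^1$-normalized tents $f_k=\mathbbm{1}_{\mathcal{T}_{z_k}}/|\mathcal{T}_{z_k}|$, the continuity of $u$ and the near-constancy of the kernel on $\mathcal{T}_{z_k}$ give $T_uf_k\approx u(\zeta_k)K(\cdot,z_k)$, whose $L^{1,\infty}$ quasinorms are bounded below while their level sets concentrate near the separated points $\zeta_k$. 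Hence $\{T_uf_k\}$ has no $L^{1,\infty}$-convergent subsequence, contradicting compactness and giving $\neg(d)\Rightarrow\neg(c)$. The routine-but-careful parts here are the two tent approximations and the verification that concentration at separated boundary points precludes convergence in the weak-type quasinorm; an alternative route would interpolate the hypothesized weak-$(1,1)$ compactness against the a priori $L^r$-boundedness of $T_u$ to produce $L^p$-compactness directly, though the weak-type endpoint makes such a compactness-interpolation argument its own technical burden.
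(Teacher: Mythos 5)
Your proposal is correct, and for four of the five implications it follows the paper's own route: the cycle $(b)\Rightarrow(a)\Rightarrow(d)\Rightarrow(b),(c)$ is exactly the paper's, with only cosmetic differences. For $(a)\Rightarrow(d)$ you restrict the compact operator to the closed subspace $\mathcal{A}^p$ and quote Theorem A, whereas the paper re-derives that direction by testing against the weakly null normalized kernels $k_z^{(p)}$; and your approximate-identity argument that $\widetilde{T_u}(z)-u(z)\to 0$ is precisely the content of the paper's Lemma \ref{BoundaryValues}, proved there instead via Stroethoff's identity $\widetilde{T_u}(z)=\int_{\mathbb{B}_n} u(\phi_z(w))\,d\widetilde{V}(w)$ and dominated convergence. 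The genuine divergence is $(c)\Rightarrow(d)$. The paper disposes of it in three lines by interpolation of compactness: $T_u$ is bounded on $L^4$ and compact from $L^1$ to $L^{1,\infty}$, so Proposition \ref{CompactnessRealInterpolation} (Cobos--Persson) together with $(L^{1,\infty},L^4)_{2/3,2}=L^2$ gives compactness on $L^2$, after which the $(a)\Rightarrow(d)$ argument applies with $p=2$. Ironically, this is exactly the ``alternative route'' you set aside as a technical burden; it is essentially free here because the paper has already stated the needed interpolation theorem for quasi-Banach couples. Your contrapositive construction is a legitimate alternative with the merit of being self-contained and of exhibiting concretely where compactness fails, but it is the step carrying the real unfinished work, and the burden is heavier than ``routine-but-careful'' in two places. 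First, for $n\geq 2$ the tent $\mathcal{T}_{z_k}$ has Euclidean diameter $\approx(1-r_k)^{1/2}$ in the complex-tangential directions, so ``near-constancy of the kernel on $\mathcal{T}_{z_k}$'' cannot be obtained from naive gradient bounds and is simply false for $z$ near the tent; what is true (and suffices) is the anisotropic estimate that for $w\in\mathcal{T}_{z_k}$ and $|1-z\overline{z_k}|\geq A(1-r_k)$ one has $|z(\overline{z_k}-\overline{w})|\lesssim (1-r_k)+\sqrt{(1-r_k)\,|1-z\overline{z_k}|}$, hence $K(z,w)=K(z,z_k)\left(1+O(A^{-1/2})\right)$, so the approximation $T_uf_k\approx u(\zeta_k)K(\cdot,z_k)$ holds only outside an $A$-dilate of the tent and the level-set analysis must be confined to that region. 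Second, your boundary points $\zeta_k$ cannot be uniformly separated (they accumulate), so the radii must be chosen adaptively, with $1-r_k$ much smaller than $\min_{j<k}|1-\zeta_j\overline{\zeta_k}|$, in order to get the pairwise lower bound $\|T_uf_k-T_uf_j\|_{L^{1,\infty}}\gtrsim\delta$ that rules out Cauchy subsequences (noting that $L^{1,\infty}$ is metrizable, so precompactness is indeed sequential). Both gaps are fillable, so your route is correct, but it is substantially longer exactly where the paper is shortest.
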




To phrase our next results, we first discuss a dyadic structure on $\mathbb{B}_n$ consisting of ``kubes," $K$, and their associated tents, $\widehat{K}$. In fact, we work with a fixed finite collection of dyadic structures, $\mathcal{D}_1, \ldots, \mathcal{D}_M.$ We follow the construction in \cites{ARS2006,RTW2017} and provide these details in Section \ref{Preliminaries}. Put $\mathcal{D}:=\bigcup_{\ell=1}^{M} \mathcal{D}_{\ell}$. For $p\in (1,\infty)$ and $\sigma \in B_p$, set
$$
    [\sigma]_{B_p^{d}}:=\sup_{K \in \mathcal{D}}\langle \sigma\rangle_{\widehat{K}}\langle \sigma^{1-p'}\rangle_{\widehat{K}}^{p-1}
$$
and for $\sigma \in B_1$, set
$$
    [\sigma]_{B_1^{d}}:=\sup_{K\in\mathcal{D}}\langle \sigma\rangle_{\widehat{K}}\|\sigma^{-1}\|_{L^{\infty}(\widehat{K})}.
$$
We will see in Section \ref{Preliminaries} that a weight $\sigma$ is in $B_p$ for $p \in [1,\infty)$ if and only if $[\sigma]_{B_p^d}<\infty$, and that $[\sigma]_{B_p}$ is comparable to $[\sigma]_{B_p^d}$. 

Assuming that our weights $\sigma$ satisfy a reverse H\"older condition, we obtain a full weighted Axler-Zheng type characterization of the compact Toeplitz operators on $\mathcal{A}_{\sigma}^p$ for $p \in (1,\infty)$ via the Berezin transform. In particular, we introduce the following reverse H\"older class: for $r>1$, we say that a weight $\sigma$ is in the reverse H\"older class $\text{RH}_r$ and write $\sigma \in \text{RH}_{r}$ if 
$$
    [\sigma]_{\text{RH}_r}:=\sup_{K \in \mathcal{D}} \frac{\langle\sigma^r\rangle_{\widehat{K}}^{1/r}}{\langle\sigma\rangle_{\widehat{K}}}<\infty.
$$

\begin{thm}\label{LpCompactnessCharacterization}
Let $u \in L^\infty$, $p \in (1,\infty)$, $r>1$, and $\sigma \in B_p \cap \text{RH}_{r}$ with $\sigma^{1-p'} \in \text{RH}_r $. Then $T_u$ acts compactly on $\mathcal{A}_{\sigma}^p$ if and only if $\widetilde{T}_u(z)\rightarrow 0$ as $|z|\rightarrow 1^-$.
\end{thm}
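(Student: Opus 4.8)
The plan is to deduce this weighted characterization from the unweighted Theorem A by means of an \emph{extrapolation of compactness}, which is the route signalled in the introduction. The starting observation is that full-scale boundedness of $T_u$ is automatic: since $u \in L^\infty$, the weighted bounds for $P$ (Theorem B) show that $T_u$ is bounded on $L^q_w$ for every $q \in (1,\infty)$ and every $w \in B_q$, with operator norm controlled by $[w]_{B_q}$. The heart of the matter is therefore to establish that, \emph{within the class of weights satisfying the stated reverse H\"older conditions}, compactness of $T_u$ at a single admissible point of the scale forces compactness at every other admissible point. Granting such a principle, both implications follow at once. For the ``if'' direction, $\widetilde{T_u}(z) \to 0$ gives, by Theorem A, compactness of $T_u$ on the unweighted space $\mathcal{A}^{p_0}$ (the constant weight $1$ being a trivial member of every $B_{p_0} \cap \mathrm{RH}_r$), and extrapolation then propagates compactness to $\mathcal{A}^p_\sigma$. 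For the ``only if'' direction, compactness on $\mathcal{A}^p_\sigma$ extrapolates back to compactness on some unweighted $\mathcal{A}^{p_0}$, whence Theorem A returns $\widetilde{T_u}(z)\to 0$.

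To carry out the extrapolation I would work at the level of $L^q_w$ and descend to the holomorphic subspaces at the end. Because $P$ is a bounded projection of $L^q_w$ onto $\mathcal{A}^q_w$ for $w \in B_q$, each Bergman space is a complemented retract of the corresponding Lebesgue space, so it suffices to produce compactness of $T_u$ on the relevant $L^q_w$ and restrict. The mechanism is interpolation of compactness, of Krasnoselskii--Cwikel type: given the target pair $(p,\sigma)$, I would realize $L^p_\sigma$ by complex interpolation as an intermediate space $[L^{p_0}_{w_0}, L^{p_1}_{w_1}]_\theta$ whose one endpoint is a space on which $T_u$ is known to be compact and whose other endpoint is a space on which it is merely bounded. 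Under complex interpolation with change of weights the intermediate weight is a geometric mean of the endpoint weights, so producing $\sigma$ at the target amounts to factoring a small power of $\sigma$ off of $\sigma$ itself. The two reverse H\"older hypotheses, $\sigma \in \mathrm{RH}_r$ and $\sigma^{1-p'} \in \mathrm{RH}_r$, enter precisely here: each self-improves to the statement that $\sigma^{1+\varepsilon}$, respectively $\sigma^{1-\varepsilon}$, remains in an appropriate B\'ekoll\`e class for small $\varepsilon > 0$, furnishing the open interval of admissible powers that the interpolation requires on both sides. Interpolating the compact endpoint against the bounded one then delivers compactness at $(p,\sigma)$.

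The main obstacle I anticipate is exactly this weight-power arithmetic married to the interpolation-of-compactness step. Unlike the classical Muckenhoupt setting, B\'ekoll\`e--Bonami weights need not satisfy a reverse H\"older inequality automatically, so the conditions $\sigma, \sigma^{1-p'} \in \mathrm{RH}_r$ are genuine restrictions and cannot be recovered from membership in $B_p$ alone; the crux is to verify that they yield containments of the form $\sigma^{1\pm\varepsilon} \in B_{q(\varepsilon)}$ with quantitative, uniform control suitable for feeding into the interpolation. A secondary point needing care is compatibility of the base case: Theorem A is phrased for $T_u$ acting on the holomorphic space $\mathcal{A}^{p_0}$, and one must reconcile this with the $L^{p_0}$-level compactness used as an interpolation endpoint, which again follows from the complementation of $\mathcal{A}^{p_0}$ in $L^{p_0}$ via $P$. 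Once the factorization and the interpolation of compactness are in place, the single extrapolation principle yields both directions of the equivalence as outlined above.
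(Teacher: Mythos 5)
Your high-level strategy is exactly the paper's: reduce both implications to an extrapolation-of-compactness theorem proved via complex interpolation with change of weights, use the constant weight as the known-compact base point (Theorem A), and let the two reverse H\"older hypotheses supply the openness/self-improvement of weight powers that B\'ekoll\`e--Bonami weights lack; your description of the weight-power arithmetic is essentially the paper's Lemma \ref{ExtrapolationLemma}, and the two-directional use of the extrapolation principle is the paper's proof of Theorem \ref{LpCompactnessCharacterization} via Theorem \ref{CompactnessExtrapolation}. However, there is a genuine gap in what you call the ``secondary point,'' and since you run the entire interpolation at the level of $L^q_w$, it is in fact the base case of your argument. You claim that compactness of $T_u$ on $\mathcal{A}^{p_0}$ lifts to compactness of $T_u$ on $L^{p_0}$ ``by complementation.'' This is false: complementation only shows that $T_uP=(T_u|_{\mathcal{A}^{p_0}})\circ P$ is compact on $L^{p_0}$, but $T_u\neq T_uP$ there, since $P(uf)\neq P(uPf)$ in general; the discrepancy $T_u(I-P)=PM_u(I-P)$ is an (adjoint) Hankel-type operator whose compactness is \emph{not} governed by the Berezin transform. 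Concretely, take a unimodular, rapidly oscillating radial symbol $u$ with $\widetilde{T_u}(z)\to 0$ as $|z|\to 1^-$ (the standard examples showing that Axler--Zheng compactness does not force $u\to 0$). Then $T_u$ is compact on $\mathcal{A}^2$ by Theorem A, yet on $L^2$ one has $(PM_u)(PM_u)^{*}=PM_{|u|^{2}}P=P$, which is not compact; hence $PM_u$ cannot be compact on $L^2$. So compactness at a Lebesgue-space endpoint is simply unavailable, and this blocks both directions of your scheme: in the ``if'' direction you cannot pass from Theorem A to a compact endpoint $L^{p_0}$, and in the ``only if'' direction you cannot pass from the hypothesis on $\mathcal{A}^p_\sigma$ to compactness on $L^p_\sigma$ before interpolating.

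The repair is to never leave the holomorphic category, which is what the paper does. It proves a Stein--Weiss-type identity for the Bergman couple itself, $[\mathcal{A}^{p_0}_{\sigma_0},\mathcal{A}^{p_1}_{\sigma_1}]_{\theta}=\mathcal{A}^p_{\sigma}$ (Proposition \ref{SteinWeissInterpolation}); there the projection $P$ and its reproducing property on $L^1$ are used correctly, namely to identify interpolation \emph{spaces}, not to lift compactness of the operator. It then applies the Cwikel--Kalton one-sided compactness interpolation theorem (Proposition \ref{CompactnessInterpolation}) directly to the couple of weighted Bergman spaces, after checking one of its structural hypotheses --- a step that matters, since one-sided interpolation of compactness is not valid for arbitrary Banach couples. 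With this replacement, the rest of your outline --- Rubio de Francia extrapolation (Proposition \ref{BpBoundednessExtrapolation}) for the merely-bounded endpoint, the verification that the constructed weight $\sigma_1$ (a small power of $\sigma$ corrected by the base weight) lies in $B_{p_1}$ using all four reverse H\"older conditions, and the reduction of both implications to Theorem A at the unweighted point --- goes through exactly as you describe.
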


Theorem \ref{LpCompactnessCharacterization} generalizes Theorem A in a major way to include weighted Bergman spaces for some important examples of weights, such as those considered by Aleman, Pott, and Reguera in \cite{APR2019}, which we now discuss. Let $M$ denote the maximal operator defined by
$$
    Mf(z):= \sup_{K \in \mathcal{D}}\langle |f|\rangle_{\widehat{K}}\chi_{\widehat{K}}(z).
$$
We say $\sigma \in B_{\infty}$ if 
$$
    [\sigma]_{B_{\infty}}:= \sup_{K \in \mathcal{D}} \frac{1}{\sigma(\widehat{K})}\int_{\widehat{K}}M(\sigma \chi_{\widehat{K}})\,dV < \infty.
$$
Note that $B_{p_1} \subseteq B_{p_2}$ for all $1\leq p_1\leq p_2\leq \infty$. Let $\phi_z$ be the involutive automorphism of $\mathbb{B}_n$ interchanging $z$ and $0$ and define the Bergman metric, $\beta$, on $\mathbb{B}_n\times\mathbb{B}_n$ by 
$$
    \beta(z,w):=\frac{1}{2}\log\left(\frac{1+|\phi_z(w)|}{1-|\phi_z(w)|}\right).
$$
We denote the ball centered at $z$ of radius $r$ with respect to $\beta$ by $B_{\beta}(z,r)$. We consider weights $\sigma$ satisfying the following condition: for all $r>0$, there exists $c_{\sigma,r}>0$ such that
\begin{align}\label{WeightRegularity}
  \sigma(z) \leq c_{\sigma,r}\sigma(w) \quad\quad \text{for all } \zeta \in \mathbb{B}_n \,\, \text{and all} \,\, z,w \in B_{\beta}(\zeta,r).
\end{align}
Condition \eqref{WeightRegularity} was first introduced on the unit disk in \cite{APR2019} where it was proved that if $\sigma \in B_{\infty}$ satisfies \eqref{WeightRegularity}, then $\sigma \in \text{RH}_r$ for some $r>1$. In fact, we will see in Section \ref{Preliminaries} that \eqref{WeightRegularity} implies the existence of $c_{\sigma}>0$ such that
\begin{align}\label{DyadicWeightRegularity}
    \sigma(z)\leq c_{\sigma}\sigma(w)\quad\quad \text{for all } K\in \mathcal{D} \,\, \text{and all} \,\, z,w \in K.
\end{align}

We therefore have the following immediate corollary of Theorem \ref{LpCompactnessCharacterization}. 
\begin{cor}\label{LpCompactnessCharacterizationCorollary}
Let $u \in L^{\infty}$, $p \in (1,\infty)$, and $\sigma \in B_p$ satisfy \eqref{DyadicWeightRegularity}. Then $T_u$ extends compactly on $\mathcal{A}_{\sigma}^p$ if and only if $\widetilde{T}_u(z)\rightarrow 0$ as $|z|\rightarrow 1^-$.
\end{cor}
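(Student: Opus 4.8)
The plan is to deduce Corollary \ref{LpCompactnessCharacterizationCorollary} directly from Theorem \ref{LpCompactnessCharacterization} by showing that the regularity hypothesis \eqref{DyadicWeightRegularity} automatically forces the reverse H\"older conditions required there. Concretely, the goal is to produce a single exponent $r>1$ for which $\sigma \in B_p \cap \text{RH}_r$ and $\sigma^{1-p'} \in \text{RH}_r$ hold simultaneously; once this is in hand, Theorem \ref{LpCompactnessCharacterization} yields the stated equivalence verbatim.

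First I would handle the weight $\sigma$ itself. Since $\sigma \in B_p$ and $B_p \subseteq B_{\infty}$ by the nesting of the B\'ekoll\`e-Bonami classes recorded in the introduction, we have $\sigma \in B_{\infty}$. Combining this with the hypothesis \eqref{DyadicWeightRegularity} and the improved reverse H\"older estimate adapted from \cite{APR2019} (established in Section \ref{Preliminaries}), one obtains $\sigma \in \text{RH}_{r_1}$ for some $r_1 > 1$.

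Next I would treat the dual weight $\sigma^{1-p'}$. By the standard duality of the B\'ekoll\`e-Bonami classes, $\sigma \in B_p$ is equivalent to $\sigma^{1-p'} \in B_{p'}$ with comparable characteristic, so in particular $\sigma^{1-p'} \in B_{p'} \subseteq B_{\infty}$. It remains to check that $\sigma^{1-p'}$ inherits the regularity condition \eqref{DyadicWeightRegularity}, and this is the one place where a small verification is needed: the pointwise bound \eqref{DyadicWeightRegularity} is symmetric under interchanging $z$ and $w$, so $\sigma(z) \leq c_{\sigma}\sigma(w)$ holds for all $z,w \in K$ in both orders. Raising to the negative power $1-p' = -\frac{1}{p-1}$ and using the reversed inequality then gives $\sigma(z)^{1-p'} \leq c_{\sigma}^{\,p'-1}\sigma(w)^{1-p'}$ for all $z,w \in K$. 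Thus $\sigma^{1-p'}$ also satisfies \eqref{DyadicWeightRegularity}, and the same argument as for $\sigma$ produces $\sigma^{1-p'} \in \text{RH}_{r_2}$ for some $r_2 > 1$.

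Finally, setting $r := \min\{r_1, r_2\} > 1$ and using that the reverse H\"older classes are nested, namely $\text{RH}_{r'} \subseteq \text{RH}_r$ whenever $1 < r \leq r'$ by the power mean inequality, we conclude that $\sigma \in B_p \cap \text{RH}_r$ and $\sigma^{1-p'} \in \text{RH}_r$, so Theorem \ref{LpCompactnessCharacterization} applies and finishes the proof. The only genuine content beyond invoking the earlier results is the transfer of \eqref{DyadicWeightRegularity} to the dual weight $\sigma^{1-p'}$; everything else is a direct application of the class nesting and the reverse H\"older consequence of \eqref{DyadicWeightRegularity}, so I expect no serious obstacle.
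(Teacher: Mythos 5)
Your proof is correct and follows essentially the same route as the paper: invoke Theorem \ref{ReverseHolder} (using $B_p \subseteq B_{\infty}$ and \eqref{DyadicWeightRegularity}) to obtain the reverse H\"older hypotheses, then apply Theorem \ref{LpCompactnessCharacterization}. In fact your write-up is more complete than the paper's one-line proof, since you explicitly verify that $\sigma^{1-p'}$ inherits both $B_{p'}$ membership and the regularity condition \eqref{DyadicWeightRegularity} — a detail the paper checks only in the proof of the neighboring Corollary \ref{CompactnessExtrapolationCorollary} and leaves implicit here.
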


Corollary \ref{LpCompactnessCharacterizationCorollary} applies to weighted Bergman spaces with the standard radial weights. For $b>-1$ and normalizing constant $c_b$, we define the radial weight 
$$
    \sigma_b(z):=c_b(1-|z|^2)^{b}.
$$
The weighted Bergman spaces $\mathcal{A}^p_{\sigma_b}$ appear naturally in function theory and operator theory, see \cite{Zhu1}. It is well-known that $\sigma_b \in B_p$ as long as $b \in (-1,p-1)$, and it is straightforward to show that such $\sigma_b$ satisfy the hypotheses of Theorem \ref{LpCompactnessCharacterization} as well as \eqref{WeightRegularity}. 

\begin{cor}\label{RadialAz}
Let $u \in L^\infty.$ The following are equivalent:
\begin{enumerate}[label=(\alph*)]
\addtolength{\itemsep}{0.2cm}
\item $T_u$ acts compactly on $\mathcal{A}^p_{\sigma_b}$ for some $p \in (1,\infty)$ and some $b \in (-1,p-1)$, 
\item $T_u$ acts compactly on $\mathcal{A}^p_{\sigma_b}$ for all $p \in (1,\infty)$ and all $b \in (-1,p-1)$, and
\item $\widetilde{T}_u(z)\rightarrow 0$ as $|z|\rightarrow 1^{-}$.
\end{enumerate}
\end{cor}

Corollary \ref{LpCompactnessCharacterizationCorollary} also includes weights given by powers of the Jacobian of biholomorphic mappings on $\mathbb{D}:=\mathbb{B}_1$. Such weights play a crucial role in proving $L^p$ regularity for the (unweighted) Bergman projection on simply connected planar domains, see \cite{LS2004} for more information. These weights can be seen to satisfy \eqref{WeightRegularity} 
using the Koebe distortion theorem and the main theorem in \cite{APR2019}. We have the following additional corollary concerning such weights. Below, $g$ denotes a univalent function on $\mathbb{D}$ and $\sigma_g^a:=|g'|^a$.

\begin{cor}\label{ConformalAz}
Let $u \in L^\infty$. The following are equivalent:
\begin{enumerate}[label=(\alph*)]
\addtolength{\itemsep}{0.2cm}
\item $T_u$ acts compactly on $\mathcal{A}_{\sigma_g^a}^p$ for some $p \in (1,\infty)$ and some $\sigma_g^a \in B_p$ such that $g$ is univalent on $\mathbb{D}$ and $a>0$, 
\item $T_u$ acts compactly on $\mathcal{A}_{\sigma_g^a}$ for all $p \in (1,\infty)$ and all $\sigma_g^a \in B_p$ such that $g$ is univalent on $\mathbb{D}$ and $a>0$, and 
\item $\widetilde{T}_u(z)\rightarrow 0$ as $|z|\rightarrow 1^{-}$.
\end{enumerate}
\end{cor}

Theorem \ref{LpCompactnessCharacterization} follows from an extrapolation of compactness argument, adapted from the recent work of Hyt\"onen and Lappas from \cite{HL2020}. Specifically, we prove the following.
\begin{thm}\label{CompactnessExtrapolation}
Let $T$ be a linear operator, $p_0 \in (1,\infty)$, and $r>1$. If $T$ is bounded on $\mathcal{A}_{\sigma}^{p_0}$ for all $\sigma \in B_{p_0}$ and $T$ is compact on $\mathcal{A}_{\sigma_0}^{p_0}$ for some $\sigma_0 \in B_{p_0}\cap \text{RH}_r$ with $\sigma_0^{1-p_0'} \in \text{RH}_r$, then $T$ is compact on $\mathcal{A}_{\sigma}^p$ for all $p \in (1,\infty)$ and all $\sigma \in B_p \cap \text{RH}_r$ with $\sigma^{1-p'} \in \text{RH}_r$.
\end{thm}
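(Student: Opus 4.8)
The plan is to combine a B\'ekoll\`e-Bonami extrapolation of \emph{boundedness} with a complex-interpolation argument that transfers \emph{compactness} from the single weight $\sigma_0$ to the whole reverse-H\"older family, in the spirit of Hyt\"onen and Lappas. First I would apply the B\'ekoll\`e-Bonami extrapolation theorem to the hypothesis that $T$ is bounded on $\mathcal{A}^{p_0}_\sigma$ for every $\sigma \in B_{p_0}$; this upgrades the boundedness to the full range, so that $T$ is bounded on $\mathcal{A}^q_\tau$ for every $q \in (1,\infty)$ and every $\tau \in B_q$. This boundedness on a two-parameter family is what will feed the moving endpoint of the interpolation.

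Next, fix a target exponent $p \in (1,\infty)$ and a target weight $\sigma \in B_p \cap \text{RH}_r$ with $\sigma^{1-p'} \in \text{RH}_r$. The goal is to realize $\mathcal{A}^p_\sigma$ as a complex interpolation space with the compact endpoint $\mathcal{A}^{p_0}_{\sigma_0}$ at $\theta = 0$. For a parameter $\theta \in (0,1)$ I would define $p_1$ by $\frac1p = \frac{1-\theta}{p_0} + \frac{\theta}{p_1}$ and then set
$$
    \sigma_1 := \sigma^{\frac{p_1}{p\theta}}\,\sigma_0^{-\frac{p_1(1-\theta)}{p_0\theta}},
$$
which is exactly the weight forced by the Stein--Weiss relation $\sigma^{1/p} = \sigma_0^{(1-\theta)/p_0}\sigma_1^{\theta/p_1}$. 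By Stein--Weiss interpolation of weighted Lebesgue spaces one has $[L^{p_0}_{\sigma_0}, L^{p_1}_{\sigma_1}]_\theta = L^p_\sigma$, and since the Bergman projection $P$ is a bounded idempotent on each $L^q_\tau$ with $\tau \in B_q$ and acts as the identity on holomorphic functions, a retract argument promotes this to $[\mathcal{A}^{p_0}_{\sigma_0}, \mathcal{A}^{p_1}_{\sigma_1}]_\theta = \mathcal{A}^p_\sigma$. As $\theta \to 1^-$ one checks that $p_1 \to p$ and $\sigma_1 \to \sigma$, so the construction places the target at an interior interpolation parameter.

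The main obstacle is verifying that the auxiliary weight $\sigma_1$ belongs to $B_{p_1}$, for only then does the extrapolated boundedness from the first step apply at the moving endpoint $\mathcal{A}^{p_1}_{\sigma_1}$. This is precisely the step that forces the reverse-H\"older hypotheses. Writing $\sigma_1 = \sigma^{a}\sigma_0^{-b}$ with $a \to 1$ and $b \to 0$ as $\theta \to 1^-$, I would estimate the averages $\langle \sigma_1 \rangle_{\widehat{K}}$ and $\langle \sigma_1^{1-p_1'}\rangle_{\widehat{K}}$ over the dyadic tents by H\"older's inequality: the conditions $\sigma, \sigma_0 \in \text{RH}_r$ supply the extra integrability needed to dominate $\langle \sigma^{a} \sigma_0^{-b}\rangle_{\widehat{K}}$ by a power of $\langle \sigma \rangle_{\widehat{K}}$, while $\sigma^{1-p'}, \sigma_0^{1-p_0'} \in \text{RH}_r$ control the dual average. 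The upshot is a perturbation lemma: under these reverse-H\"older assumptions there is a $\theta$ sufficiently close to $1$ for which $[\sigma_1]_{B_{p_1}} < \infty$ with $p_1 \in (1,\infty)$. Fixing such a $\theta$, the operator $T$ is bounded on $\mathcal{A}^{p_1}_{\sigma_1}$ and compact on $\mathcal{A}^{p_0}_{\sigma_0}$, so Cwikel's interpolation theorem for compact operators yields compactness of $T$ on the intermediate space $\mathcal{A}^p_\sigma$, completing the argument.
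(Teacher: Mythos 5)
Your proposal is correct and follows essentially the same route as the paper: Rubio de Francia--type $B_p$ extrapolation for boundedness, the Stein--Weiss plus retract identification $[\mathcal{A}^{p_0}_{\sigma_0},\mathcal{A}^{p_1}_{\sigma_1}]_\theta=\mathcal{A}^p_\sigma$, a reverse-H\"older perturbation lemma producing $\sigma_1\in B_{p_1}$ for $\theta$ near the target endpoint, and the Cwikel--Kalton one-sided compactness interpolation theorem (applicable here via its condition on couples of lattices of measurable functions). The only differences are cosmetic: you place the target at $\theta\to 1^-$ instead of the paper's $\theta\to 0^+$ (a relabeling of the interpolation parameter), and in the perturbation step the reverse-H\"older conditions actually pair as $\sigma$ with $\sigma_0^{1-p_0'}$ for $\langle\sigma_1\rangle_{\widehat{K}}$ and $\sigma^{1-p'}$ with $\sigma_0$ for the dual average, a bookkeeping detail that does not affect the argument.
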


Given $u \in L^{\infty}$, we say that a weight $\sigma$ is in $u B_p$ for $p \in (1,\infty)$ if
$$
    [\sigma]_{uB_p}:=\sup_{K \in \mathcal{D}}\|u\|_{L^{\infty}(\widehat{K})}\langle \sigma\rangle_{\widehat{K}}\langle \sigma^{1-p'}\rangle_{\widehat{K}}^{p-1}<\infty
$$
and $\sigma$ is in $u B_1$ if
$$
    [\sigma]_{uB_1}:=\sup_{K\in\mathcal{D}}\|u\|_{L^{\infty}(\widehat{K})}\langle \sigma\rangle_{\widehat{K}}\|\sigma^{-1}\|_{L^{\infty}(\widehat{K})}<\infty.
$$
Notice that $[\sigma]_{u B_p}\leq \|u\|_{L^{\infty}}[\sigma]_{B_p^d}$, and therefore $B_p \subseteq u B_p$. Further, if $u^{-1}$ is also in $L^{\infty}$, then $[\sigma]_{B_p^d} \leq \|u^{-1}\|_{L^{\infty}}[\sigma]_{u B_p}$ and so $B_p = uB_p$. However, if $\|u\|_{L^{\infty}(\widehat{K}_j)}\rightarrow 0$ as $j\rightarrow \infty$ for some sequence  $\{K_j\}_{j=1}^{\infty}\subseteq \mathcal{D}$, then 
$$
    B_p \subsetneq u B_p.
$$
Indeed, in this case, the products $\langle \sigma\rangle_{\widehat{K}_j}\langle \sigma^{1-p'}\rangle_{\widehat{K}_j}^{p-1}$ could be as large as $\|u\|_{L^{\infty}(\widehat{K}_j)}^{-1}$.


We obtain weighted boundedness results for weights beyond the $B_p$ classes.
\begin{thm}\label{ToeplitzLpBoundedness}

Let $u \in L^{\infty}$ and $p \in (1,\infty)$. If $\sigma \in u^{\min(1,p-1)}B_p$, then $T_{u}$ acts boundedly on $L^p_{\sigma}$ with 
$$
    \|T_u\|_{L^p_{\sigma}\rightarrow L^p_{\sigma}}\leq C [\sigma]_{u^{\min(p-1,1)}B_p}^{\max(1,\frac{1}{p-1})}
$$
for some $C>0$.
\end{thm}
\noindent Theorem \ref{ToeplitzLpBoundedness} uses a sparse domination technique and parallels the result for Haar multipliers of \cite{SVW2019}*{Theorem 1.3}.

Assuming the reverse H\"older condition, we obtain a weighted weak-type $(1,1)$ result for Toeplitz operators with weights beyond $B_1$.
\begin{thm}\label{ToeplitzL1Boundedness}
Let $u \in L^{\infty}$ and $r>1$. If $\sigma \in u B_1 \cap \text{RH}_r$, then $T_u$ acts boundedly from $L^1_{\sigma}$ to $L^{1,\infty}_{\sigma}$ with
$$
    \|T_u\|_{L^1_{\sigma}\rightarrow L^{1,\infty}_{\sigma}}\leq C [\sigma]_{uB_1}[\sigma]_{\text{RH}_r}(1+\log r')
$$
for some $C>0$.
\end{thm}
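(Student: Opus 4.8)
The plan is to combine a sparse domination of $T_u$ with a careful weak-type analysis in which the two hypotheses on $\sigma$ play separate roles: the class $uB_1$ will absorb the weight together with the size of $u$ on each tent, while the reverse H\"older condition will convert Lebesgue-measure smallness into $\sigma$-measure smallness. First I would invoke the sparse domination principle underlying Theorem \ref{ToeplitzLpBoundedness} to reduce matters, for $f\ge 0$, to the weak-type $(1,1)$ bound for the positive dyadic operator
$$
    \Lambda_{\mathcal{S}}f:=\sum_{K\in\mathcal{S}}\|u\|_{L^{\infty}(\widehat{K})}\langle f\rangle_{\widehat{K}}\mathbbm{1}_{\widehat{K}},
$$
where $\mathcal{S}\subseteq\mathcal{D}$ is a sparse family. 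Since there are finitely many dyadic systems, it suffices to treat a family inside a single $\mathcal{D}_{\ell}$, so that the maximal tents appearing below are genuinely disjoint.

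Writing $\langle f\rangle^{\sigma}_{\widehat{K}}:=\sigma(\widehat{K})^{-1}\int_{\widehat{K}}f\,\sigma\,dV$ for the $\sigma$-average, the next step uses the $uB_1$ hypothesis to pass from Lebesgue to $\sigma$-averages. For each $K$ one has the pointwise bound $f\le\|\sigma^{-1}\|_{L^{\infty}(\widehat{K})}f\sigma$ on $\widehat{K}$, and hence
$$
    \|u\|_{L^{\infty}(\widehat{K})}\langle f\rangle_{\widehat{K}}\le\|u\|_{L^{\infty}(\widehat{K})}\langle\sigma\rangle_{\widehat{K}}\|\sigma^{-1}\|_{L^{\infty}(\widehat{K})}\langle f\rangle^{\sigma}_{\widehat{K}}\le[\sigma]_{uB_1}\langle f\rangle^{\sigma}_{\widehat{K}}.
$$
This yields the pointwise domination $\Lambda_{\mathcal{S}}f\le[\sigma]_{uB_1}\,\mathcal{B}f$, where $\mathcal{B}f:=\sum_{K\in\mathcal{S}}\langle f\rangle^{\sigma}_{\widehat{K}}\mathbbm{1}_{\widehat{K}}$. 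The problem is thus reduced to the estimate
$$
    \|\mathcal{B}\|_{L^1_{\sigma}\to L^{1,\infty}_{\sigma}}\le C[\sigma]_{\text{RH}_r}(1+\log r'),
$$
for the $\sigma$-averaging sparse operator $\mathcal{B}$, which now carries all of the reverse H\"older dependence.

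To prove this last bound I would run a Calder\'on-Zygmund decomposition with respect to the measure $\sigma\,dV$: fixing $\lambda>0$ and $f\ge 0$ with $\|f\|_{L^1_{\sigma}}=1$, let $\{\widehat{Q_j}\}$ be the maximal tents with $\langle f\rangle^{\sigma}_{\widehat{Q_j}}>\lambda$ and set $\Omega=\bigcup_j\widehat{Q_j}$, so that $\sigma(\Omega)\le\lambda^{-1}$. The tents $\widehat{K}\in\mathcal{S}$ with $\langle f\rangle^{\sigma}_{\widehat{K}}>\lambda$ are contained in $\Omega$ and so contribute only on the exceptional set. The heart of the matter is the remaining tail $\sum_{K:\,\langle f\rangle^{\sigma}_{\widehat{K}}\le\lambda}\langle f\rangle^{\sigma}_{\widehat{K}}\mathbbm{1}_{\widehat{K}}$, coming from the overlap of many tents of small $\sigma$-average. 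Here the reverse H\"older inequality enters through its geometric consequence
$$
    \sigma(E)\le[\sigma]_{\text{RH}_r}\Big(\tfrac{|E|}{|\widehat{K}|}\Big)^{1/r'}\sigma(\widehat{K}),\qquad E\subseteq\widehat{K},
$$
which upgrades the sparseness of $\mathcal{S}$ (with respect to $V$) to quantitative control of $\sigma$ on the nested chains of tents through a point. Organizing these chains by generations, in which the Lebesgue measure contracts geometrically, and balancing the number of generations one retains against the reverse H\"older gain exponent $1/r'$, is what I expect to produce the factor $(1+\log r')$; summing the resulting series and using $\sigma(\Omega)\le\lambda^{-1}$ then gives the claimed weak-type bound.

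The main obstacle is precisely this last step. Because $\sigma$ lies outside $B_1$, the family $\mathcal{S}$ need not be sparse with respect to $\sigma$: the comparison $\sigma(E_K)\ge c\,\sigma(\widehat{K})$ for the sparse sets $E_K$ fails once $r'$ is large, so the standard weak-type argument for sparse operators, which passes through $\sigma$-sparseness, does not apply. The delicate point is to quantify the overlap loss sharply enough to obtain a constant that is linear in $[\sigma]_{\text{RH}_r}$ and only logarithmic in $r'$, rather than the crude power of $r'$ that a direct Carleson-embedding estimate would give. This is the analogue in the tent geometry of the dyadic Haar-multiplier estimate of \cite{SVW2019}, and carrying the optimization out carefully is where the real work lies.
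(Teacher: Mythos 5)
Your first two steps are correct and match the paper: the sparse domination is exactly Proposition \ref{SparseToeplitz}, and the pointwise bound $\|u\|_{L^{\infty}(\widehat{K})}\langle f\rangle_{\widehat{K}}\leq[\sigma]_{uB_1}\langle f\rangle^{\sigma}_{\widehat{K}}$ is a valid use of the $uB_1$ condition. But the proof stops exactly where the theorem lives. The weak-type bound for your operator $\mathcal{B}$ is only sketched (``balancing the number of generations\dots is what I expect to produce the factor $(1+\log r')$''), and you yourself flag it as the main obstacle and ``where the real work lies.'' That missing estimate is the entire content of the paper's Theorem \ref{SparseGeneralWeakType} and Corollary \ref{SparseMrWeakType}, i.e.\ essentially all of the technical work behind Theorem \ref{ToeplitzL1Boundedness}. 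What you have is a reduction plus a conjecture, not a proof.

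Moreover, the reduction you chose actively removes the tool that makes the missing estimate provable. The paper never passes to $\sigma$-averages: it keeps Lebesgue averages of $f$, proves the two-weight Fefferman--Stein inequality $\|T_uf\|_{L^{1,\infty}_{\sigma}}\lesssim(1+\log r')\|f\|_{L^1_{M_{u,r}\sigma}}$, and feeds in $uB_1$ and $\text{RH}_r$ only on the last line via the pointwise bound $M_{u,r}\sigma\lesssim[\sigma]_{\text{RH}_r}M_u\sigma\lesssim[\sigma]_{uB_1}[\sigma]_{\text{RH}_r}\sigma$. The order matters for the following concrete reason. In the Domingo-Salazar--Lacey--Rey scheme, within each fixed-level family $\{K:\|u\|_{L^{\infty}(\widehat{K})}\langle f\rangle_{\widehat{K}}\sim C^{-k}\}$ one needs the localization $\int_{\widehat{K}}f\,dV\lesssim\int_{E_K}f\,dV$, which is obtained from Lebesgue sparseness \eqref{SparseCondition}: the children have comparable averages and total Lebesgue measure at most $\rho|\widehat{K}|$, with $C\rho<1$, so the children's contribution can be absorbed. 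It is then the disjointness of the sets $E_K$ that makes the deep-generation sum converge to $\|f\|_{L^1_{M_{u,\Phi}\sigma}}$ after applying Lemma \ref{OrliczHolderApplied} (this is where $\text{RH}_r$ enters), as in the claim \eqref{GeneralWeakTypeClaim}. In your $\sigma$-average formulation the same absorption would read $\int_{\widehat{K}}f\sigma\,dV\le\int_{E_K}f\sigma\,dV+C^{-k}\sum_{K'}\sigma(\widehat{K'})$, and since the best available substitute for $\sigma$-sparseness is the trivial bound $\sum_{K'}\sigma(\widehat{K'})\le\sigma(\widehat{K})$, the absorbed term becomes $C\int_{\widehat{K}}f\sigma\,dV$ with $C>1$ and cannot be moved to the left-hand side --- this is precisely the failure you diagnose, but it is fatal rather than a technicality. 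Without localization, the deep-generation contribution $\sum_{v,K}\langle f\rangle^{\sigma}_{\widehat{K}}\,\sigma(\mathcal{E}\cap\widetilde{K})$ only reduces to overlapping integrals $\int_{\widehat{K}}f\sigma\,dV$, whose sum over generations diverges; and patching with the RH-induced $\sigma$-sparseness, which only kicks in after roughly $r'$ generations, yields a constant that grows linearly in $r'$ rather than like $\log r'$. To close the argument you should give up the early conversion to $\sigma$-averages and instead prove the bumped-weight inequality with $M_{u,\Phi}\sigma$ on the right-hand side, as in Theorem \ref{SparseGeneralWeakType}.
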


The following quantitative weighted weak-type estimate follows from Theorem \ref{ToeplitzL1Boundedness} and our improved reverse H\"older inequality. 


\begin{cor}\label{ToeplitzL1BoundednessCorollary}
If $u \in L^{\infty}$ and $\sigma \in u B_1 \cap B_{\infty}$ satisfies \eqref{DyadicWeightRegularity}, then $T_u$ acts boundedly from $L^1_{\sigma}$ to $L^{1,\infty}_{\sigma}$ with
$$
    \|T_u\|_{L^1_{\sigma}\rightarrow L^{1,\infty}_{\sigma}}\leq C[\sigma]_{uB_1} c_{\sigma}^{\frac{1}{2\alpha [\sigma]_{B_\infty}+1}}\log(e+[\sigma]_{B_{\infty}})
$$
for some $C>0$.
\end{cor}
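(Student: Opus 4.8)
The plan is to derive Corollary \ref{ToeplitzL1BoundednessCorollary} from Theorem \ref{ToeplitzL1Boundedness} by supplying a quantitative reverse Hölder inequality with control on the exponent $r$ and the characteristic $[\sigma]_{\mathrm{RH}_r}$ in terms of $[\sigma]_{B_\infty}$ and $c_\sigma$, and then optimizing the resulting bound over $r$. The main input is the promised improvement of the reverse Hölder property from \cite{APR2019}: given $\sigma \in B_\infty$ satisfying \eqref{DyadicWeightRegularity}, one obtains that $\sigma \in \mathrm{RH}_r$ for a specific range of $r$ with explicit dependence. Concretely, I expect the sharp reverse Hölder inequality to yield something like membership in $\mathrm{RH}_r$ for all $r$ with $1 < r \le 1 + \tfrac{1}{c\,[\sigma]_{B_\infty}}$ together with the uniform bound $[\sigma]_{\mathrm{RH}_r} \le C$, mirroring the classical Euclidean sharp reverse Hölder theorem (as in Hytönen–Pérez) but now incorporating the local regularity constant $c_\sigma$ coming from \eqref{DyadicWeightRegularity}.

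First I would invoke the quantitative reverse Hölder statement established in Section \ref{Preliminaries}, which should read: there is a dimensional constant $\alpha > 0$ so that for $r$ satisfying $1 < r' $ bounded below appropriately, $\sigma$ lies in $\mathrm{RH}_r$ with $[\sigma]_{\mathrm{RH}_r} \le C\, c_\sigma^{\theta}$ for a suitable exponent $\theta$ depending on $r$ and $[\sigma]_{B_\infty}$. The precise shape of this bound is what dictates the final form of the corollary. Given the target estimate, the natural choice is to select the exponent so that $r' \approx \alpha[\sigma]_{B_\infty}$, which makes the factor $c_\sigma^{\theta}$ collapse to $c_\sigma^{1/(2\alpha[\sigma]_{B_\infty}+1)}$ and the logarithmic factor $1 + \log r'$ in Theorem \ref{ToeplitzL1Boundedness} become comparable to $\log(e + [\sigma]_{B_\infty})$.

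Second I would simply substitute these two quantities into the conclusion of Theorem \ref{ToeplitzL1Boundedness}. That theorem gives
$$
    \|T_u\|_{L^1_{\sigma}\rightarrow L^{1,\infty}_{\sigma}}\leq C\,[\sigma]_{uB_1}\,[\sigma]_{\mathrm{RH}_r}\,(1+\log r'),
$$
so replacing $[\sigma]_{\mathrm{RH}_r}$ by its bound $C\,c_\sigma^{1/(2\alpha[\sigma]_{B_\infty}+1)}$ and $1 + \log r'$ by $C\log(e + [\sigma]_{B_\infty})$ yields exactly the claimed inequality after absorbing constants. The only point requiring care is that $\sigma \in uB_1 \cap B_\infty$ with \eqref{DyadicWeightRegularity} indeed places $\sigma$ in the hypothesis class $uB_1 \cap \mathrm{RH}_r$ of Theorem \ref{ToeplitzL1Boundedness} for the chosen $r$; this is precisely what the improved reverse Hölder inequality guarantees, since membership in $uB_1$ already forces $\sigma \in B_1 \subseteq B_\infty$ and the regularity \eqref{DyadicWeightRegularity} upgrades $B_\infty$ to a quantitative $\mathrm{RH}_r$ bound.

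The main obstacle will be establishing and correctly tracking the constants in the sharp reverse Hölder inequality under condition \eqref{DyadicWeightRegularity}, rather than the substitution itself, which is routine. In particular, the delicate step is verifying that the exponent on $c_\sigma$ genuinely takes the form $\tfrac{1}{2\alpha[\sigma]_{B_\infty}+1}$ for the optimizing choice of $r$; this forces the reverse Hölder constant to be dimension-free and to depend on $c_\sigma$ only through a power that decays like $1/[\sigma]_{B_\infty}$. I would expect to borrow the Calderón–Zygmund stopping-time argument adapted to the dyadic kube/tent structure $\mathcal{D}$, where \eqref{DyadicWeightRegularity} lets one pass from averages over a tent $\widehat{K}$ to pointwise comparisons of $\sigma$ inside kubes, and the $B_\infty$ characteristic controls the geometric decay of the stopping tents; once that lemma is in hand with explicit constants, the corollary is immediate.
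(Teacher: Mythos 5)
Your proposal matches the paper's proof essentially verbatim: the paper takes $r = 1+\frac{1}{2\alpha[\sigma]_{B_\infty}}$ (so $r' = 1+2\alpha[\sigma]_{B_\infty}$), applies the sharp reverse H\"older inequality of Theorem \ref{ReverseHolder} to get $[\sigma]_{\text{RH}_r} \lesssim c_\sigma^{1/(2\alpha[\sigma]_{B_\infty}+1)}$ and $1+\log r' \lesssim \log(e+[\sigma]_{B_\infty})$, and substitutes these into Theorem \ref{ToeplitzL1Boundedness}, exactly as you describe. One minor correction: your remark that membership in $uB_1$ forces $\sigma \in B_1$ has the containment backwards (the paper shows $B_1 \subseteq uB_1$, which can be proper), but this is harmless here since $\sigma \in B_\infty$ is assumed directly in the corollary's hypothesis.
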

\noindent The constant $\alpha$ above is a ``dyadic doubling" constant that we describe in Section \ref{Preliminaries}.

\begin{rem}
Corollary \ref{ToeplitzL1BoundednessCorollary} in the case $u=1$ gives the following quantitative estimate for the Bergman projection: if $\sigma \in B_1$ satisfies \eqref{DyadicWeightRegularity}, then 
$$
    \|P\|_{L^1_{\sigma}\rightarrow L^{1,\infty}_{\sigma}}\leq C[\sigma]_{B_1}c_{\sigma}^{\frac{1}{2\alpha [\sigma]_{B_\infty}+1}}\log(e+[\sigma]_{B_{\infty}}).
$$
This improves the dependence on the B\'ekoll\`e-Bonami characteristics in the weighted weak-type $(1,1)$ bound for $P$ when considering weights that satisfy $\eqref{DyadicWeightRegularity}$ with bounded $c_{\sigma}$. Indeed, the only proofs of the weighted weak-type $(1,1)$ inequality for $P$ with $\sigma \in B_1$ exist in \cites{B198182,SW2020} -- the proof handling the generality of \cite{SW2020} only allows for a qualitative bound, while tracking the dependence in the proof of \cite{B198182} gives that 
$$
    \|P\|_{L^1_{\sigma}\rightarrow L^{1,\infty}_{\sigma}}\leq C[\sigma]_{B_1}^3
$$
for some $C>0$. We note that the proof in \cite{B198182} with some minor modifications can improve the dependence to 
$$
    \|P\|_{L^1_{\sigma}\rightarrow L^{1,\infty}_{\sigma}}\leq C[\sigma]_{B_1}^2
$$
for some $C>0$. 
\end{rem}

The above comments lead to the following corollary for radial weights, as a straightforward computation shows that $[\sigma_b]_{B_1} \approx \frac{1}{1+b}$ if $-1<b \leq 0$. In particular, this estimate provides quantitative control on the blowup of $\|P\|_{L_{\sigma_b}^1\rightarrow L_{\sigma_b}^{1,\infty}}$ as $b \rightarrow -1^{+}$, which to the authors' best knowledge was not previously known. 
\begin{cor}
Let $b \in (-1,0].$ The following weighted weak-type estimate for the (unweighted) Bergman projection with radial weights holds:
$$       
\|P\|_{L^1_{\sigma_b}\rightarrow L^{1,\infty}_{\sigma_b}}\leq \frac{C}{1+b}\log\left(e+\frac{1}{1+b}\right),
$$
for some $C>0$.

\end{cor}


\subsection{Extensions and Open Problems}\label{Extensions}
We fix the underlying domain of our spaces to be $\mathbb{B}_n$ for simplicity; however, our results extend to handle spaces of functions defined on more general domains. We may define weighted Bergman and Lebesgue spaces of a general domain $D\subseteq \mathbb{C}_n$ ($\mathcal{A}^p_{\sigma}(D)$, $L^p_{\sigma}(D)$, $\mathcal{A}^{1,\infty}_{\sigma}(D)$, and $L^{1,\infty}_{\sigma}(D)$) in the same way as above, except with $\mathbb{B}_n$ replaced by $D$. Other components of our statements, such as $B_p$ weights, conditions on symbols, dyadic structures, etcetera, may be understood in these contexts with suitable changes. We discuss how our arguments and results can be extended to spaces of more general domains.

We call a domain $D$ simple if it is either
\begin{enumerate}
    \item a finite type domain in $\mathbb{C}^2$, 
    \item a decoupled, finite type domain in $\mathbb{C}^n$,
    \item a convex, finite type domain in $\mathbb{C}^n$, or 
    \item a strongly pseudoconvex domain with smooth boundary in $\mathbb{C}^n$.
\end{enumerate}
Clearly, the unit ball is a model example of a simple domain. We also consider strongly pseudoconvex domains with less boundary smoothness. Theorem A has been extended to strongly pseudoconvex domains with smooth boundary by Wang and Xia in \cite{WX2021}. The sufficiency direction of Theorem B has been extended for $p>1$ to simple domains by Huo, Wick, and the second author in \cites{HWW20201,HWW20202} and to strongly pseudoconvex domains with $C^4$ boundary by Wick and the second author in \cite{WW2020}; this direction of Theorem B when $p=1$ has been extended to strongly pseudoconvex domains with $C^4$ boundary by the authors in \cite{SW2020} and is seen to hold for simple domains following ideas from \cite{M1994}.

Our Theorem \ref{ToeplitzLpCompactness} and Corollary \ref{ContinuousLpCompactness} extend to any simple domain, since our arguments only rely on the good behavior of the Bergman kernel on $\overline{D}\times\overline{D}\setminus\{(z,z): z \in \partial D\}$ which is guaranteed in these settings by \cite{B1987}. For Theorem \ref{CompactnessExtrapolation} to hold in general, we need the boundedness of the Bergman projection with respect to $B_p$ weights and we need $P$ to reproduce $\mathcal{A}^1$ functions. These properties both hold true on, for example, strongly pseudoconvex domains with smooth boundary. The reproducing property can be deduced from an approximation theorem on Bergman spaces together with kernel estimates, while the weighted estimates appear in \cites{HWW20201,HWW20202}. Moreover, Theorem \ref{LpCompactnessCharacterization} and Corollary \ref{LpCompactnessCharacterizationCorollary} also extend to strongly pseudoconvex domains with smooth boundaries since a version of Theorem A is available in this setting. 
Theorem \ref{ToeplitzLpBoundedness}, Theorem \ref{ToeplitzL1Boundedness}, and Corollary \ref{ToeplitzL1BoundednessCorollary}, extend to simple domains, since in this situation the domain can be equipped with a dyadic structure and the Bergman projection is pointwise bounded by a suitable dyadic majorant, see \cite{HWW20202}. 

We now list some open questions connected to this work. 
\begin{open}\label{LpSymbolOpen} Given $p \in (1,\infty)$ and $\sigma \in B_p$, characterize the symbols $u \in L^\infty$ for which $T_u$ is compact on $L^p_\sigma$, where $1<p<\infty$ and $\sigma \in B_p.$ 
\end{open}

\begin{open} \label{ApSymbolOpen} Answer Open Question \ref{LpSymbolOpen} with $L^p_{\sigma}$ 
replaced with  $\mathcal{A}^p_\sigma$.
\end{open}

\begin{open} \label{LpWeightOpen}
Given $p\in(1,\infty)$ and $u \in L^\infty$, characterize the weights $\sigma$ for which $T_u$ acts boundedly on $L^p_\sigma$. Do the same for bounded $T_u$ from $L^p_{\sigma}$ to $L^{p,\infty}_{\sigma}$ for $p\in[1,\infty)$.
\end{open}

\begin{open}\label{WeakTypeDependenceOpen}
    Determine the optimal dependence of $\|P\|_{L^1_{\sigma}\rightarrow L^{1,\infty}_{\sigma}}$ on $[\sigma]_{B_1}$.
\end{open}

Corollary \ref{LpCompactnessCharacterizationCorollary} answers Open Question \ref{LpSymbolOpen} in the case that $u$ is continuous near the boundary. Theorem \ref{LpCompactnessCharacterization} completes the characterization sought in Open Question \ref{ApSymbolOpen} for the subclass of $B_p$ weights satisfying the reverse H\"{o}lder inequality. Theorem \ref{ToeplitzLpBoundedness} and Theorem \ref{ToeplitzL1Boundedness} give sufficient conditions for Open Question \ref{LpWeightOpen} in the cases $p \in (1,\infty)$ and $p=1$, respectively. Corollary \ref{ToeplitzL1BoundednessCorollary} addresses Open Question \ref{WeakTypeDependenceOpen} for the subclass of $B_1$ weights satisfying the reverse H\"older inequality.

The remainder of this paper is organized as follows. In Section \ref{Preliminaries}, we discuss the dyadic structure of $\mathbb{B}_n$, introduce the necessary maximal operators, and prove a reverse H\"older inequality for $B_{\infty}$ weights satisfying \eqref{DyadicWeightRegularity}. In Section \ref{CompactnessSection}, we collect Riesz-Kolmogorov type compactness theorems for weighted Bergman spaces (from which we derive Theorem \ref{ToeplitzLpCompactness} and Corollary \ref{ContinuousLpCompactness}) and then establish Theorem \ref{CompactnessExtrapolation}, Theorem \ref{LpCompactnessCharacterization},  and Corollary \ref{LpCompactnessCharacterizationCorollary}. In Section \ref{BoundednessSection}, we prove Theorem \ref{ToeplitzLpBoundedness}, Theorem \ref{ToeplitzL1Boundedness}, and Corollary \ref{ToeplitzL1BoundednessCorollary}. 
\subsection{Statements and Declarations}
The authors have no competing interests to declare.


\section{Preliminaries}\label{Preliminaries}

We write $A\lesssim B$ if $A\leq CB$ for some $C>0$, and write $A\approx B$ if $A\lesssim B$ and $B \lesssim A$. 

\subsection{Dyadic structure of the unit ball}

We introduce a dyadic structure of ``kubes" on $\mathbb{B}_n$ as in \cites{ARS2006,RTW2017}. 
Fix parameters $\theta_0, \lambda_0 >0$. 
For $k \in \mathbb{N}$, let $\mathbb{S}_{k\theta_0}:= \{z \in \mathbb{B}_n: \beta(0,z)=k\theta_0\}$. We claim that for each $k \in \mathbb{N}$ and $j \in \{1,2,\ldots,J_k\}$, there exist points $w_j^k \in \mathbb{B}_n$ corresponding to Borel sets $Q_j^k\subseteq \mathbb{S}_{k\theta_0}$ containing $w_j^k$, and a constant $C>1$ such that
\begin{align*}
     &\mathbb{S}_{k\theta_0}=\bigcup_{j=1}^{J_k}Q_j^k,\\
     &Q_{j}^k\cap Q_{j'}^k =\emptyset \quad \text{whenever}\,\, j\neq j', \quad\text{and}\\
     &\mathbb{S}_{k\theta_0}\cap B_{\beta}(w_j^k,\lambda_0)\subseteq Q_j^k\subseteq \mathbb{S}_{k\theta_0}\cap B_{\beta}(w_j^k,C\lambda_0).
\end{align*}
For $z \in \mathbb{B}_n$, let $P_{k\theta_0}z$ be the radial projection of $z$ onto $\mathbb{S}_{k\theta_0}$. Define 
$$
     K_1^0:= B_{\beta}(0,\theta_0)
$$
and for $k\in \mathbb{N}$, $j \in \{1,2,\ldots,J_k\}$
$$
     K_j^k:=\{z \in \mathbb{B}_n: k\theta_0 < \beta(0,z)\leq (k+1)\theta_0 \quad \text{and}\quad P_{k\theta_0} z\in Q_j^k\}.
$$

We denote the collection of kubes $K_j^k$ obtained in this construction by $\mathcal{D}_0$. In what follows, we often drop the subscript and superscript labeling a kube if they are not necessary for clarity. Observe that the kubes in $\mathcal{D}_0$ partition $\mathbb{B}_n$. Given a kube $K=K_j^k \in \mathcal{D}_0$, we define its center by $c_K=c_j^k:= P_{(k+\frac{1}{2})\theta_0}w_j^k$. Also, for $K=K_j^k$, we call $k$ the generation of $K$ and denote this by $d(K)$. 
 
We define a tree structure on $\mathcal{D}_0$ in the following way. For $k\ge 1$, we say a kube $K_{j_1}^{k+1}$ is a child of $K_{j_2}^{k}$ if $P_{k \theta_0}c_{j_1}^{k+1} \in Q_{j_2}^{k}$ and we declare every kube in $\{K_j^1\}_{j=1}^{J_1}$ to be a child of $K_1^0$. More generally, we say $K_{j_1}^{k_1}$ is a descendant of $K_{j_2}^{k_2}$ and write $K_{j_1}^{k_1} \prec K_{j_2}^{k_2}$ if $k_1>k_2$ and $P_{k_2 \theta_0}c_{j_1}^{k_1} \in Q_{j_2}^{k_2}$. We denote the collection of children of $K \in \mathcal{D}_0$ by $\text{ch}(K)$ and define $\mathcal{D}(K):= \{K\}\cup \{K' \in \mathcal{D}_0: K' \prec K\}$. Given a fixed kube $K\in\mathcal{D}_0$, we define the dyadic tent over the kube, $\widehat{K}$, to be the following set:
$$
    \widehat{K}:= \bigcup_{K' \in \mathcal{D}(K)}  K'.
$$
Notice that by construction,  given two tents $\widehat{K}$ and $\widehat{K'}$ associated to kubes in $\mathcal{D}_0$, either the two sets are disjoint or one is contained in the other. In addition, \cite{RTW2017}*{Lemma 1} asserts that
\begin{align}\label{KubeTentEquivalence}
    |K| \approx |\widehat{K}| \approx (1-|c_K|^2)^{n+1}\approx e^{-2d(K)\theta_0(n+1)}
\end{align}
for any $K \in \mathcal{D}_0$. We explicitly write
\begin{align}\label{SparseCondition}
    |K| \ge (1-\rho_{0})|\widehat{K}|,
\end{align}
where $\rho_{0}\in(0,1)$, and therefore 
\begin{align*}
    \bigg|\bigcup_{K' \in \text{ch}(K)}\widehat{K'}\bigg|\leq \rho_{0}|\widehat{K}|.
\end{align*}

We now prove the important dyadic doubling condition.
\begin{lemma} \label{DyadicDoubling} There exists $\alpha_{0}>1$ such that if $K \in \mathcal{D}_0$ and $K' \in \text{ch}(K)$, then 
$$ 
    |\widehat{K}| \leq \alpha_{0} |\widehat{K'}|.
$$
\end{lemma}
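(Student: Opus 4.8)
The plan is to derive this directly from the volume equivalence \eqref{KubeTentEquivalence}, which already encodes essentially everything needed. The only structural fact I would invoke is that passing to a child increases the generation by exactly one: by the definition of the tree structure on $\mathcal{D}_0$, a child of $K_{j_2}^{k}$ has the form $K_{j_1}^{k+1}$ (and the kubes $K_j^1$ are children of $K_1^0$), so every $K' \in \text{ch}(K)$ satisfies $d(K') = d(K)+1$.

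With that in hand, I would unwind the $\approx$ in \eqref{KubeTentEquivalence} into explicit uniform constants: there exist $c_1, c_2 > 0$, independent of the kube, with
$$
    c_1 e^{-2d(K)\theta_0(n+1)} \leq |\widehat{K}| \leq c_2 e^{-2d(K)\theta_0(n+1)}
$$
for every $K \in \mathcal{D}_0$. Applying the upper bound to $\widehat{K}$ and the lower bound to $\widehat{K'}$, and using $d(K')=d(K)+1$, the factor $e^{-2d(K)\theta_0(n+1)}$ cancels and leaves
$$
    \frac{|\widehat{K}|}{|\widehat{K'}|} \leq \frac{c_2 e^{-2d(K)\theta_0(n+1)}}{c_1 e^{-2(d(K)+1)\theta_0(n+1)}} = \frac{c_2}{c_1}\,e^{2\theta_0(n+1)}.
$$
Setting $\alpha_0 := \frac{c_2}{c_1}\,e^{2\theta_0(n+1)}$ (enlarging to ensure $\alpha_0>1$ if necessary, though this is automatic since $\widehat{K'}\subseteq\widehat{K}$ forces the ratio to be at least $1$) yields the claim, with $\alpha_0$ depending only on $\theta_0$, $n$, and the absolute constants in \eqref{KubeTentEquivalence}.

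There is essentially no serious obstacle here: the content has been front-loaded into the geometric estimate \eqref{KubeTentEquivalence}, whose proof is cited from \cite{RTW2017}. The only point requiring a moment's care is that the implicit constants in \eqref{KubeTentEquivalence} must be uniform over all kubes (which they are, as stated), so that the cancellation of the generation-dependent exponential genuinely produces a single constant $\alpha_0$ valid for every $K$ and every child $K'$ simultaneously. I would therefore emphasize the uniformity of $c_1,c_2$ rather than belabor the arithmetic.
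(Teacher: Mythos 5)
Your proposal is correct and follows essentially the same route as the paper: both arguments reduce the lemma to the uniform equivalence $|\widehat{K}| \approx e^{-2d(K)\theta_0(n+1)}$ from \eqref{KubeTentEquivalence} together with the fact that a child's generation exceeds its parent's by exactly one, yielding the constant $e^{2\theta_0(n+1)}$ up to the uniform implicit constants. The only cosmetic difference is that the paper re-derives the exponential estimate from the explicit formula for $|c_K|$, whereas you invoke the final equivalence in \eqref{KubeTentEquivalence} directly, which is legitimate since that equivalence is exactly what is cited from \cite{RTW2017}.
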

\begin{proof}
Note that by definition, for any kube $K\in\mathcal{D}_0$ with $d(K)=k$, we have
$$
    \beta(0,c_K)= \frac{1}{2} \log \left(\frac{1+|c_K|}{1-|c_K|}\right)=\left(k+\frac{1}{2}\right)\theta_0.
$$
We thus obtain
$$
    |c_K| = \frac{e^{2(k+\frac{1}{2}) \theta_0}-1}{e^{2(k+\frac{1}{2}) \theta_0}+1},
$$
and moreover
\begin{align*}
|\widehat{K}|&  \approx (1-|c_K|^2)^{n+1} \\
& \approx (1-|c_K|)^{n+1}\\
& =\left(\frac{2}{e^{2(k+\frac{1}{2}) \theta_0}+1}\right)^{n+1} \\
& \approx e^{-2(k+\frac{1}{2})\theta_0(n+1)}.
\end{align*}
Therefore, we estimate
\begin{align*}
|\widehat{K}| & \approx e^{-2(k+\frac{1}{2})\theta_0(n+1)}\\
& = e^{2\theta_0(n+1)} e^{-2(k+1+\frac{1}{2})\theta_0(n+1)}\\
& \approx e^{2 \theta_0(n+1)} |\widehat{K'}|,
\end{align*}
which proves the result.
\end{proof}

It is a fact that every Carleson tent can be approximated by a dyadic tent, which is made precise by the following extension of \cite{RTW2017}*{Lemma 3}.
\begin{lemma}\label{TentApproximation}
 There exists a finite collection of dyadic structures $\{ \mathcal{D}_{\ell}\}_{\ell=1}^{M}$ such that for any $z \in \mathbb{B}_n$, there exists $K \in \bigcup_{\ell=1}^{M} \mathcal{D}_{\ell}$ such that $\widehat{K} \supseteq \mathcal{T}_z$ and $|\widehat{K}| \approx |\mathcal{T}_z|$. Moreover, for any $K' \in \bigcup_{\ell=1}^{M}\mathcal{D}_{\ell}$, there exists $z' \in \mathbb{B}_n$ such that $\mathcal{T}_{z'}\supseteq\widehat{K'}$ and $ |\mathcal{T}_{z'}|\approx |\widehat{K'}|$. 
\end{lemma}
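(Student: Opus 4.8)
The plan is to reduce the statement to a question about the boundary sphere $\partial\mathbb{B}_n$, viewed as a space of homogeneous type under the Kor\'anyi-type quasi-metric $\rho(\zeta,\xi)=|1-\langle\zeta,\xi\rangle|$, and then to invoke the machinery of adjacent (``shifted'') dyadic systems, in the spirit of the single-system construction of \cite{RTW2017}. Each dyadic structure $\mathcal{D}_\ell$ is built, exactly as $\mathcal{D}_0$ was above, from a decomposition $\{Q_j^k\}$ of each sphere $\mathbb{S}_{k\theta_0}$; projecting radially identifies such a decomposition with a dyadic decomposition of $\partial\mathbb{B}_n$ in the metric $\rho$, under which a generation-$k$ cap has $\rho$-diameter comparable to $e^{-2k\theta_0}$ and surface measure comparable to $e^{-2k\theta_0 n}$. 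Correspondingly, the base of a Carleson tent $\mathcal{T}_z$ is, up to a multiplicative constant, a $\rho$-ball of radius $r:=1-|z|$, and $|\mathcal{T}_z|\approx r^{n+1}$. Since $(\partial\mathbb{B}_n,\rho)$ is geometrically doubling, the standard construction of finitely many adjacent dyadic systems produces cap decompositions with the property that every $\rho$-ball is contained in some dyadic cap, drawn from one of $M$ systems, of comparable diameter. I would run the kube construction once for each of these $M$ cap systems to obtain $\mathcal{D}_1,\dots,\mathcal{D}_M$.

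For the first assertion, given $z$ I would set $r=1-|z|$ and choose the generation $k$ with $e^{-2k\theta_0}\approx r$ (rounding so that the resulting tent reaches at least as far into $\mathbb{B}_n$ as the top of $\mathcal{T}_z$). The base of $\mathcal{T}_z$ is a $\rho$-ball of radius $\approx r\approx e^{-2k\theta_0}$, so by the adjacent-systems property it lies in a single generation-$k$ cap $Q$ belonging to one of the $\mathcal{D}_\ell$, with $\rho$-diameter comparable to $e^{-2k\theta_0}$. Letting $K$ be the kube of generation $k$ with base $Q$, the radial choice of $k$ guarantees the vertical containment and the cap choice guarantees the spherical containment, so $\widehat{K}\supseteq\mathcal{T}_z$; the measure comparison $|\widehat{K}|\approx e^{-2k\theta_0(n+1)}\approx r^{n+1}\approx|\mathcal{T}_z|$ then follows from \eqref{KubeTentEquivalence}.

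For the converse, given $K'\in\bigcup_\ell\mathcal{D}_\ell$ of generation $k$, the containment $Q_j^k\subseteq\mathbb{S}_{k\theta_0}\cap B_\beta(w_j^k,C\lambda_0)$ from the construction shows that the base cap of $\widehat{K'}$ sits inside a $\rho$-ball of radius comparable to $e^{-2k\theta_0}$. I would then take $z'$ on the ray through the center of this ball at radial level $1-|z'|$ equal to a fixed large multiple of $e^{-2k\theta_0}$; for an appropriate choice of the multiplicative constant, $\mathcal{T}_{z'}$ contains $\widehat{K'}$ both radially and spherically, while $|\mathcal{T}_{z'}|\approx (1-|z'|)^{n+1}\approx e^{-2k\theta_0(n+1)}\approx|\widehat{K'}|$ again by \eqref{KubeTentEquivalence}. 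This direction is routine once the base caps are controlled by balls, which is immediate from the construction.

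The main obstacle is the spherical matching in the first assertion: a single dyadic cap system cannot contain every $\rho$-ball inside a cap of comparable size, since a ball at the critical scale may straddle the boundary between neighboring caps and only be captured by a much larger ancestor. Overcoming this is precisely the role of the finite family of adjacent systems, and the technical content is to verify that $(\partial\mathbb{B}_n,\rho)$ is geometrically doubling and that the cap decompositions of the different spheres $\mathbb{S}_{k\theta_0}$ can be chosen coherently (nested in $k$) while realizing the adjacency property, so that the resulting kube structures are genuine dyadic structures in the sense used above. A secondary point requiring care is the interaction between the discretized radial generations $k\theta_0$ and the continuous radius $r$, but this only costs a bounded factor and is absorbed into the $\approx$ constants.
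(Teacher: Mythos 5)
Your proposal is correct and takes essentially the same approach as the paper: the paper handles the first assertion by citing \cite{RTW2017}*{Lemma 3}, whose proof is precisely the adjacent-dyadic-systems construction on $(\partial\mathbb{B}_n,\rho)$ that you unpack, and it proves the second assertion by the same device you sketch, enclosing the base cap in a $\rho$-ball of radius $C\delta^k$ and taking $z'$ on the ray through its center at depth $1-|z'|=C\delta^k$. The only difference is one of detail: the paper writes out the final containment computation (via $|w|\ge |z'|$ and the triangle inequality) that you defer as routine.
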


\begin{proof}
The first assertion follows from \cite{RTW2017}*{Lemma 3}.

Note that any dyadic tent $\widehat{K}_{j}^{k} \in \mathcal{D}_{\ell}$ can be formed by projecting some subset $D_j^k \subseteq \partial \mathbb{B}_n$  onto spheres in the Bergman metric (see \cite{RTW2017}*{Lemma 3}). The set $D_j^k$ is part of a dyadic system, say of caliber $\delta$, on $\partial\mathbb{B}_n$ with respect to the pseudo-metric $\rho(z,w)= |1-z\overline{w}|$. In particular, there exist $z_j^k \in \partial \mathbb{B}_n$ and constants $c,C>0$ such that
$$
    D(z_j^k, c\delta^k) \subseteq D_j^k \subseteq D(z_j^k, C\delta^k),
$$
where $D(z,r):= \{w \in \partial \mathbb{B}_n: \rho(z,w)<r\}$. Without loss of generality, we may assume $C\geq 2$. Note that the parameter $\theta$ is chosen so that $\delta=e^{-2 \theta}$. Clearly, there exists $z' \in \mathbb{B}_n$ such that $1-|z'| = C\delta^k $ and $P_{\infty}z'=z_j^k,$ where $P_{\infty}$ denotes projection to the boundary. Consider the Carleson tent $\mathcal{T}_{|z'|z'}.$ There holds
$$|\mathcal{T}_{|z'|z'}| \approx (1-|z'|^2)^{n+1} \approx \delta^{k(n+1)}=e^{-2k\theta(n+1)} \approx |\widehat{K}_j^k| ,$$
so it remains to prove $\widehat{K}_j^k \subseteq \mathcal{T}_{|z'|z'}.$ 

Take $w \in \widehat{K}_j^k$ and let $w'=\frac{w}{|w|} \in \partial \mathbb{B}_n.$ It is clear from construction that $w' \in D_j^k \subseteq D(z_j^k, C\delta^k)= D(P_{\infty}z',1-|z'|).$ We claim that this implies $w \in \mathcal{T}_{|z'|z'}.$ We have that
$$
    \left| 1-\frac{\overline{w}}{|w|}\frac{z'}{|z'|}\right|< 1-|z'|,
$$ 
and we must show 
$$
    \left| 1-\overline{w}\frac{z'}{|z'|}\right|< 1-|z'|^2.
$$ 
Note that by definition, $|w| \geq \frac{e^{2 k \theta}-1}{e^{2k \theta}+1}$, and so
$$
    1-|z'|  = C e^{-2k\theta} \geq \frac{2}{e^{2k\theta}+1} \geq 1-|w|,
$$
or equivalently, $|w| \geq |z'|.$ We use the fact that $|w| \geq |z'|$ to compute
\begin{align*}
\left| 1-\overline{w}\frac{z'}{|z'|}\right| & =  \left| 1-|w|+|w|-\overline{w}\frac{z'}{|z'|}\right|\\
& \leq 1-|w|+ |w|\left| 1-\frac{\overline{w}}{|w|}\frac{z'}{|z'|}\right|\\
& < 1-|w| + |w|(1-|z'|)\\
& = 1-|w||z'|\\
& \leq 1-|z'|^2,
\end{align*}
\end{proof}

The collection $\{\mathcal{D}_{\ell}\}_{\ell=1}^{M}$ remains fixed throughout this paper, and we put $\mathcal{D}:=\bigcup_{\ell=1}^{M}\mathcal{D}_{\ell}$. To ensure our estimates hold for any of these dyadic structures, we set 
$$
    \rho:=\max_{1\leq \ell\leq M}\rho_{\ell}\quad \text{and}\quad \alpha:=\max_{1\leq \ell\leq M} \alpha_{\ell}.
$$ 
It is a fact (see \cite{ARS2006}*{Lemma 2.8}) that there exist independent $C_{1}, C_{2}>0$ such that
\begin{align*}
B_{\beta}(c_K,C_{1}) \subseteq K \subseteq B_{\beta}(c_K,C_{2})
\end{align*}
for any $K \in\mathcal{D}$. Therefore, 
it is clear that \eqref{WeightRegularity} implies \eqref{DyadicWeightRegularity}.

Lemma \ref{TentApproximation} implies that the $B_p$ and $B_p^d$ characteristics may be used interchangeably when working with B\'ekoll\`e-Bonami weights, as is stated in the following proposition.
\begin{prop}
If $p \in [1,\infty)$ and $\sigma \in B_p$, then
$$
    [\sigma]_{B_p} \lesssim [\sigma]_{B_p^d} \lesssim [\sigma]_{B_p}.
$$
\end{prop}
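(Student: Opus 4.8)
The plan is to deduce the proposition directly from Lemma \ref{TentApproximation} together with the elementary monotonicity of averages and essential suprema under set inclusion. Since both $[\sigma]_{B_p}$ and $[\sigma]_{B_p^d}$ are suprema of the same functional evaluated on Carleson tents $\mathcal{T}_z$ and dyadic tents $\widehat{K}$ respectively, it suffices to show that each tent of one type is trapped inside a tent of the other type of comparable Lebesgue measure, and that the B\'ekoll\`e--Bonami product is stable under such comparisons. I would treat $p \in (1,\infty)$ and $p=1$ simultaneously, the only difference being that the factor $\langle \sigma^{1-p'}\rangle^{p-1}$ is replaced by $\|\sigma^{-1}\|_{L^\infty}$ when $p=1$.

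First I would record the key elementary fact: if $A \subseteq B$ are measurable sets with $|B| \leq c|A|$, then for any nonnegative measurable $g$,
$$
    \langle g\rangle_A = \frac{1}{|A|}\int_A g\,dV \leq \frac{|B|}{|A|}\cdot\frac{1}{|B|}\int_B g\,dV \leq c\,\langle g\rangle_B,
$$
and moreover $\|g^{-1}\|_{L^\infty(A)} \leq \|g^{-1}\|_{L^\infty(B)}$ by monotonicity of the essential supremum. Applying the first inequality with $g=\sigma$ and $g=\sigma^{1-p'}$ then yields, whenever $A \subseteq B$ with $|B|\leq c|A|$,
$$
    \langle \sigma\rangle_A\langle\sigma^{1-p'}\rangle_A^{p-1} \leq c^p\,\langle\sigma\rangle_B\langle\sigma^{1-p'}\rangle_B^{p-1},
$$
with the analogous statement for $p=1$ using the essential-supremum bound in place of the $\langle\sigma^{1-p'}\rangle^{p-1}$ factor.

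For the inequality $[\sigma]_{B_p} \lesssim [\sigma]_{B_p^d}$, I would fix $z \in \mathbb{B}_n$ and invoke the first part of Lemma \ref{TentApproximation} to produce $K \in \mathcal{D}$ with $\mathcal{T}_z \subseteq \widehat{K}$ and $|\widehat{K}| \approx |\mathcal{T}_z|$; taking $A=\mathcal{T}_z$ and $B=\widehat{K}$ in the display above bounds the $B_p$ product over $\mathcal{T}_z$ by a constant times the $B_p^d$ product over $\widehat{K}$, which is at most $[\sigma]_{B_p^d}$, and taking the supremum over $z$ finishes this direction. For the reverse inequality $[\sigma]_{B_p^d} \lesssim [\sigma]_{B_p}$, I would fix $K \in \mathcal{D}$ and use the second part of Lemma \ref{TentApproximation} to produce $z' \in \mathbb{B}_n$ with $\widehat{K} \subseteq \mathcal{T}_{z'}$ and $|\mathcal{T}_{z'}| \approx |\widehat{K}|$, then apply the display with $A=\widehat{K}$ and $B=\mathcal{T}_{z'}$ and take the supremum over $K$.

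The argument is essentially bookkeeping, so there is no serious obstacle; the only points requiring care are that the implied constants in Lemma \ref{TentApproximation} are uniform over all $z \in \mathbb{B}_n$ and all $K \in \mathcal{D}$, so that the comparison constant $c$ (and hence $c^p$) is independent of the chosen tent, and that the comparison genuinely uses containment in one direction only. The mass of $\sigma$ need not be comparably distributed across $\mathcal{T}_z$ and $\widehat{K}$, but monotonicity of the average under inclusion combined with comparable measures is all that is needed. I would note that the resulting constant depends on $p$ through the power $c^p$ and on the fixed dyadic parameters, but not on $\sigma$, which is exactly what the statement requires.
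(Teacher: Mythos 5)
Your proposal is correct and follows essentially the same route as the paper: both directions are deduced from Lemma \ref{TentApproximation} by comparing the B\'ekoll\`e--Bonami product over a tent of one type with the product over a containing tent of the other type of comparable measure. The only difference is that you make explicit the elementary monotonicity fact (average over $A$ bounded by $c$ times average over $B$ when $A \subseteq B$ and $|B|\leq c|A|$, plus monotonicity of the essential supremum) that the paper's two displayed inequalities leave implicit, which is a fine and complete way to write it.
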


\begin{proof}
Fix $z \in \mathbb{B}_n$. By Lemma \ref{TentApproximation}, there exists $K \in \mathcal{D}$ with $\widehat{K} \supseteq \mathcal{T}_z$ and $|\widehat{K}|\approx|\mathcal{T}_z|$. Then 
$$
    \langle \sigma \rangle_{\mathcal{T}_z}\langle \sigma^{1-p'}\rangle_{\mathcal{T}_z}^{p-1} \lesssim \langle \sigma \rangle_{\widehat{K}}\langle \sigma^{1-p'}\rangle_{\widehat{K}}^{p-1}
$$
for $1<p<\infty$, and
$$
    \langle \sigma \rangle_{\mathcal{T}_z}\|\sigma^{-1}\|_{L^{\infty}(\mathcal{T}_z)} \lesssim \langle \sigma \rangle_{\widehat{K}}\| \sigma^{-1}\|_{L^{\infty}(\widehat{K})}.
$$ 
Taking the supremum over $z \in \mathbb{B}_n$ yields $[\sigma]_{B_p}\lesssim [\sigma]_{B_p^d}$. The same argument with the roles of $\widehat{K}$ and $\mathcal{T}_z$ reversed is also justified by Lemma \ref{TentApproximation} and implies $[\sigma]_{B_p^d}\lesssim [\sigma]_{B_p}$.
\end{proof}

\subsection{Maximal operators and Young functions}

Given $K \in \mathcal{D}$, we define the following localized maximal operator:
$$
     M_{K}f(z):= \sup_{K' \in \mathcal{D}(K)} \langle |f| \rangle_{\widehat{K'}}\chi_{\widehat{K'}}(z).
$$ 
We have the following important property for weights satisfying \eqref{DyadicWeightRegularity}.
\begin{lemma} \label{MaximalDyadicControl}
If $\sigma$ is a weight satisfying \eqref{DyadicWeightRegularity}, $K \in \mathcal{D}$, and $z \in \widehat{K}$, then
$$
    \sigma(z) \leq \frac{c_{\sigma}}{1-\rho}  M_{K} \sigma(z).
$$
\end{lemma}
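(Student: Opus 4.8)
The plan is to localize to the single kube containing $z$ and then play the pointwise weight regularity \eqref{DyadicWeightRegularity} against the sparseness estimate \eqref{SparseCondition}. First I would observe that, since the kubes in $\mathcal{D}(K)$ partition the tent $\widehat{K}$, the point $z \in \widehat{K}$ lies in a unique kube $K' \in \mathcal{D}(K)$. This $K'$ is the natural candidate to realize the supremum defining $M_K\sigma(z)$: because $z \in K' \subseteq \widehat{K'}$ we have $\chi_{\widehat{K'}}(z) = 1$, and since $K'$ is admissible in the supremum (it belongs to $\mathcal{D}(K)$), this already gives $M_K\sigma(z) \geq \langle \sigma\rangle_{\widehat{K'}}$.

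It therefore suffices to bound $\sigma(z)$ by a multiple of $\langle \sigma\rangle_{\widehat{K'}}$. Here I would invoke \eqref{DyadicWeightRegularity} with $z$ and an arbitrary $w$ both lying in the kube $K'$, which gives $\sigma(w) \geq \sigma(z)/c_{\sigma}$ for a.e.\ $w \in K'$. Integrating this lower bound over $K'$ and discarding the nonnegative contribution of $\widehat{K'}\setminus K'$ yields
$$
    \langle \sigma\rangle_{\widehat{K'}} = \frac{1}{|\widehat{K'}|}\int_{\widehat{K'}}\sigma\,dV \geq \frac{1}{|\widehat{K'}|}\int_{K'}\sigma\,dV \geq \frac{\sigma(z)}{c_{\sigma}}\,\frac{|K'|}{|\widehat{K'}|}.
$$
Now the sparseness bound \eqref{SparseCondition}, which holds uniformly across all the dyadic structures because $\rho = \max_{\ell}\rho_{\ell}$, gives $|K'| \geq (1-\rho)|\widehat{K'}|$, so that $\langle\sigma\rangle_{\widehat{K'}} \geq \frac{1-\rho}{c_{\sigma}}\sigma(z)$. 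Rearranging to $\sigma(z) \leq \frac{c_{\sigma}}{1-\rho}\langle\sigma\rangle_{\widehat{K'}}$ and combining with $M_K\sigma(z) \geq \langle\sigma\rangle_{\widehat{K'}}$ finishes the argument.

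There is no genuinely hard step here; the proof is a one-kube localization. The only points requiring care are bookkeeping ones: applying \eqref{DyadicWeightRegularity} in the correct direction (to \emph{lower}-bound $\sigma$ on $K'$ in terms of its value at $z$), ensuring the ratio $|K'|/|\widehat{K'}|$ is bounded below by $1-\rho$ with the global $\rho$ rather than a structure-dependent $\rho_{\ell}$, and noting that $K'$ is eligible in the supremum defining $M_K$ precisely because it is a descendant of $K$ (or $K$ itself).
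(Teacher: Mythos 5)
Your proof is correct and follows essentially the same route as the paper's: both localize to the unique kube $K' \in \mathcal{D}(K)$ containing $z$, apply \eqref{DyadicWeightRegularity} on $K'$, pass from $K'$ to $\widehat{K'}$ via the sparseness bound \eqref{SparseCondition}, and finish by noting $\widehat{K'}$ is admissible in the supremum defining $M_K\sigma(z)$. The only difference is cosmetic — you lower-bound $\langle\sigma\rangle_{\widehat{K'}}$ by $\sigma(z)$ and rearrange, whereas the paper chains the inequalities $\sigma(z) \leq c_{\sigma}\langle\sigma\rangle_{K'} \leq \frac{c_{\sigma}}{1-\rho}\langle\sigma\rangle_{\widehat{K'}} \leq \frac{c_{\sigma}}{1-\rho}M_K\sigma(z)$ directly.
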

\begin{proof}
Since $z \in \widehat{K}$, there exists $K' \in \mathcal{D}(K)$ with $z \in K'$. Using \eqref{DyadicWeightRegularity} and \eqref{SparseCondition}, we conclude 
$$
    \sigma(z) \leq c_{\sigma} \langle \sigma \rangle_{K'} \leq  \frac{c_{\sigma}}{1-\rho} \langle \sigma \rangle_{\widehat{K'}} \leq \frac{c_{\sigma}}{1-\rho} M_{K} \sigma(z).
$$
\end{proof} 


The maximal operator associated to $u \in L^{\infty}$ is given by 
$$
    M_{u}f(z):= \sup_{K \in \mathcal{D}}\|u\|_{L^{\infty}(\widehat{K})}\langle |f|\rangle_{\widehat{K}}\chi_{\widehat{K}}(z).
$$ 
Notice that if $\sigma \in uB_1$ then $M_{u}\sigma (z)\lesssim [\sigma]_{uB_1}\sigma(z)$. We have the following weak-type inequality for $M_{u}$ that will be used in proving Theorem \ref{ToeplitzL1Boundedness}.
\begin{lemma}\label{FeffermanStein} If $u \in L^{\infty}$ and $\sigma$ is a weight, then
$$
    \|M_{u}f\|_{L_{\sigma}^{1,\infty}} \lesssim \|f\|_{L_{M_{u}\sigma}^{1}}
$$
for all $f \in L_{M_{u}\sigma}^1$.
\end{lemma}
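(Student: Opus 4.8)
The plan is to prove this as a Fefferman--Stein type weak-type $(1,1)$ bound for the weighted dyadic maximal operator $M_u$, via the standard Calderón--Zygmund stopping-time (maximal tent) decomposition adapted to the dyadic tents. Since the nesting dichotomy --- two tents $\widehat{K},\widehat{K'}$ are either disjoint or one contains the other --- only holds \emph{within} a single dyadic structure $\mathcal{D}_\ell$, the first step is to linearize over the grids: writing $M_u f=\max_{1\le\ell\le M} M_u^\ell f$, where $M_u^\ell$ is the analogous supremum restricted to $K\in\mathcal{D}_\ell$, we have $\{M_u f>\lambda\}\subseteq\bigcup_{\ell=1}^M\{M_u^\ell f>\lambda\}$, so it suffices to bound each $\sigma(\{M_u^\ell f>\lambda\})$ and sum the $M$ resulting estimates. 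The finiteness of $M$ only costs a harmless constant, which is acceptable since we claim $\lesssim$.

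Fix $\ell$ and $\lambda>0$. I would consider the collection $\mathcal{K}_\lambda:=\{K\in\mathcal{D}_\ell:\|u\|_{L^\infty(\widehat{K})}\langle|f|\rangle_{\widehat{K}}>\lambda\}$, so that $\{M_u^\ell f>\lambda\}=\bigcup_{K\in\mathcal{K}_\lambda}\widehat{K}$. Because every chain of nested tents terminates at the root tent $\widehat{K_1^0}=\mathbb{B}_n$, each tent in $\mathcal{K}_\lambda$ is contained in a maximal one; by the nesting dichotomy the maximal tents $\{\widehat{K_j}\}_j$ are pairwise disjoint and cover the level set. On each $\widehat{K_j}$ the defining inequality gives $\lambda|\widehat{K_j}|<\|u\|_{L^\infty(\widehat{K_j})}\int_{\widehat{K_j}}|f|\,dV$; multiplying through by $\langle\sigma\rangle_{\widehat{K_j}}$ turns the left side into $\lambda\,\sigma(\widehat{K_j})$, while the crucial observation is the pointwise lower bound $M_u\sigma(z)\ge\|u\|_{L^\infty(\widehat{K_j})}\langle\sigma\rangle_{\widehat{K_j}}$ valid for every $z\in\widehat{K_j}$ (since $K_j\in\mathcal{D}$). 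Combining these yields $\lambda\,\sigma(\widehat{K_j})\le\int_{\widehat{K_j}}|f|\,M_u\sigma\,dV$, and summing over the disjoint family gives $\lambda\,\sigma(\{M_u^\ell f>\lambda\})\le\int_{\mathbb{B}_n}|f|\,M_u\sigma\,dV=\|f\|_{L^1_{M_u\sigma}}$.

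Summing over $\ell$ and taking the supremum over $\lambda>0$ then produces $\|M_u f\|_{L^{1,\infty}_\sigma}\le M\|f\|_{L^1_{M_u\sigma}}$, which is the claim. I do not expect a genuine obstacle: the argument is the textbook Fefferman--Stein proof, and the one point requiring care is the bookkeeping around the $M$ distinct grids (handled by the union bound above) together with confirming the maximal-tent extraction is legitimate, which it is because the tents of a fixed $\mathcal{D}_\ell$ form a tree with largest element $\mathbb{B}_n$. One should also note at the outset that if $\|f\|_{L^1_{M_u\sigma}}=\infty$ there is nothing to prove, and that any $K\in\mathcal{K}_\lambda$ automatically has $\|u\|_{L^\infty(\widehat{K})}>0$, whence $M_u\sigma>0$ on $\widehat{K_j}$, so no degeneracy arises in passing through $\langle\sigma\rangle_{\widehat{K_j}}$.
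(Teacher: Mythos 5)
Your proof is correct and is essentially the paper's own argument: both extract, within each fixed grid $\mathcal{D}_\ell$, the maximal tents of the level set $\{\|u\|_{L^\infty(\widehat{K})}\langle|f|\rangle_{\widehat{K}}>\lambda\}$, use their disjointness, apply the pointwise bound $\|u\|_{L^\infty(\widehat{K})}\langle\sigma\rangle_{\widehat{K}}\leq M_u\sigma(z)$ on $\widehat{K}$, and sum over the $M$ grids to get the constant $M$. The only cosmetic difference is that you split $M_u f=\max_\ell M_u^\ell f$ and take a union bound, whereas the paper works with the combined collection $\Omega_\lambda=\bigcup_\ell\Omega_{\lambda,\ell}$ directly; the estimates are identical.
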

\begin{proof}
Let $\lambda>0$. 
Let $\Omega_{\lambda,\ell}$ be the collection of $K \in \mathcal{D}_{\ell}$ such that $\widehat{K}$ is maximal with respect to inclusion in $\{\widehat{K}: K \in \mathcal{D}_{\ell} \,\,\,\text{and}\,\,\, \|u\|_{L^{\infty}(\widehat{K})}\langle|f|\rangle_{\widehat{K}} > \lambda\}$ and put $\Omega_{\lambda}=\bigcup_{\ell=1}^{M}\Omega_{\lambda,\ell}$. Notice that $\widehat{K}\cap\widehat{K'}=\emptyset$ for distinct $K,K' \in \Omega_{\lambda,\ell}$ and that $\{M_{u}f>\lambda\} = \bigcup_{K \in \Omega_{\lambda}}\widehat{K}$. Therefore
\begin{align*}
    \sigma(\{M_{u}f>\lambda\})&\leq\sum_{K\in \Omega_{\lambda}}\sigma(\widehat{K})\\
    &<\frac{1}{\lambda}\sum_{K\in \Omega_{\lambda}}\|u\|_{L^{\infty}(\widehat{K})}\langle |f|\rangle_{\widehat{K}}\sigma(\widehat{K})\\
    &=\frac{1}{\lambda}\sum_{K\in \Omega_{\lambda}}\int_{\widehat{K}}|f(z)|\|u\|_{L^{\infty}(\widehat{K})}\langle\sigma\rangle_{\widehat{K}}\,dV(z)\\
    &\leq \frac{1}{\lambda}\sum_{\ell=1}^{M}\sum_{K\in\Omega_{\lambda,\ell}}\int_{\widehat{K}}|f(z)|M_{u}\sigma(z)\,dV(z)\\
    &\leq \frac{M}{\lambda} \|f\|_{L_{M_{u}\sigma}^1}.
\end{align*}
\end{proof}

We will use the following maximal operator adapted to a weight $\sigma$:
$$
    M_{\sigma}f(z):=\sup_{K \in \mathcal{D}} \frac{1}{\sigma(\widehat{K})}\left(\int_{\widehat{K}}|f|\sigma\,dV\right) \chi_{\widehat{K}}(z).
$$
The following lemma is well-known, see \cite{RTW2017}*{Lemma 4} for example. 
\begin{lemma}\label{WeightedMaximal}
If $\sigma$ is a weight and $p \in (1,\infty)$, then $M_{\sigma}$ is bounded on $L^p_{\sigma}$. Moreover, $\|M_{\sigma}\|_{L^p_{\sigma}\rightarrow L^p_{\sigma}}$ does not depend on $\sigma$.
\end{lemma}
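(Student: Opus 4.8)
The plan is to recognize $M_{\sigma}$ as the dyadic Hardy--Littlewood maximal operator associated to the measure $d\mu:=\sigma\,dV$ and to establish its $L^p(\mu)=L^p_{\sigma}$ boundedness by interpolating between a weak-type $(1,1)$ estimate and the trivial $L^{\infty}$ bound. The crucial feature to track throughout is that neither endpoint constant depends on $\sigma$; only the fixed number $M$ of dyadic structures and the exponent $p$ enter.

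First I would prove that $M_{\sigma}$ is of weak-type $(1,1)$ with respect to $\mu$, with $\|M_{\sigma}\|_{L^1_{\sigma}\rightarrow L^{1,\infty}_{\sigma}}\leq M$. Fix $\lambda>0$. Mirroring the selection in the proof of Lemma \ref{FeffermanStein}, for each $\ell$ let $\Omega_{\lambda,\ell}$ be the collection of $K\in\mathcal{D}_{\ell}$ whose tents $\widehat{K}$ are maximal with respect to inclusion among those satisfying $\frac{1}{\sigma(\widehat{K})}\int_{\widehat{K}}|f|\sigma\,dV>\lambda$, and set $\Omega_{\lambda}=\bigcup_{\ell=1}^{M}\Omega_{\lambda,\ell}$. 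Such maximal tents exist since the largest tent in each $\mathcal{D}_{\ell}$ is all of $\mathbb{B}_n$. Because tents in a single dyadic structure are nested (either disjoint or one contained in the other), the tents in $\Omega_{\lambda,\ell}$ are pairwise disjoint and $\{M_{\sigma}f>\lambda\}=\bigcup_{K\in\Omega_{\lambda}}\widehat{K}$. Then
\[
\sigma(\{M_{\sigma}f>\lambda\})\leq\sum_{\ell=1}^{M}\sum_{K\in\Omega_{\lambda,\ell}}\sigma(\widehat{K})<\frac{1}{\lambda}\sum_{\ell=1}^{M}\sum_{K\in\Omega_{\lambda,\ell}}\int_{\widehat{K}}|f|\sigma\,dV\leq\frac{M}{\lambda}\|f\|_{L^1_{\sigma}},
\]
where the final inequality uses the disjointness of the tents within each $\mathcal{D}_{\ell}$. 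The emphasis is that this bound is $M/\lambda$ with $M$ fixed, entirely free of $\sigma$.

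Next, the $L^{\infty}$ endpoint is immediate: since $M_{\sigma}f(z)$ is a supremum of $\sigma$-averages of $|f|$, we have $M_{\sigma}f(z)\leq\|f\|_{L^{\infty}}$ pointwise, and $L^{\infty}(\sigma)=L^{\infty}$ because $\sigma$ is positive almost everywhere; thus $\|M_{\sigma}\|_{L^{\infty}\rightarrow L^{\infty}}\leq 1$, again independent of $\sigma$. Finally I would apply the Marcinkiewicz interpolation theorem to the sublinear operator $M_{\sigma}$ between these two endpoints, obtaining boundedness on $L^p_{\sigma}=L^p(\mu)$ for all $p\in(1,\infty)$ with operator norm controlled by a constant depending only on $p$ and $M$. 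There is no serious obstacle here; the only point requiring genuine care is to confirm that both endpoint constants are uniform in $\sigma$, which the nesting and disjointness of the tents guarantees, and it is precisely this uniformity that delivers the asserted $\sigma$-independence of $\|M_{\sigma}\|_{L^p_{\sigma}\rightarrow L^p_{\sigma}}$.
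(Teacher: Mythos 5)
Your proof is correct. Note, however, that the paper does not actually prove Lemma \ref{WeightedMaximal} at all: it simply cites \cite{RTW2017}*{Lemma 4} as a well-known fact. So your contribution is a self-contained argument for something the paper delegates to a reference, and it is the standard one: recognize $M_{\sigma}$ as the dyadic maximal operator for the measure $\sigma\,dV$, prove the weak-type $(1,1)$ bound with constant $M$ by selecting maximal tents (using that tents within a single $\mathcal{D}_{\ell}$ are nested, so the selected tents in each structure are pairwise disjoint, and that maximal tents exist because each kube has only finitely many ancestors up to the root tent $\mathbb{B}_n$), combine with the trivial $L^{\infty}$ bound of $1$ (valid since $\sigma>0$ a.e.\ makes $L^{\infty}_{\sigma}=L^{\infty}$), and invoke Marcinkiewicz interpolation to get an $L^p_{\sigma}$ norm depending only on $p$ and $M$. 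Your tracking of the two endpoint constants is exactly what yields the $\sigma$-independence claimed in the lemma, and your selection argument correctly mirrors the one the paper itself uses in the proof of Lemma \ref{FeffermanStein}. One small point worth making explicit if this were to be included: for a general weight a tent may have $\sigma(\widehat{K})=\infty$, but then for $f\in L^1_{\sigma}$ the corresponding average is $0$, so such tents are never selected and the argument goes through unchanged.
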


We call a function $\Phi:[0,\infty)\rightarrow [0,\infty)$ a Young function if $\Phi$ is continuous, convex, increasing, and satisfies $\Phi(0)=0$. This implies that the inverse function $\Phi^{-1}$ defined on $(0,\infty)$ exists. We define the maximal operator associated to $u \in L^{\infty}$ and $\Phi$ by
$$
    M_{u,\Phi}f(z):=\sup_{K\in\mathcal{D}} \|u\|_{L^{\infty}(\widehat{K})}\|f\|_{\Phi,\widehat{K}}\chi_{\widehat{K}}(z),
$$
where
$$
    \|f\|_{\Phi,\widehat{K}}:=\inf\left\{\lambda >0 : \frac{1}{|\widehat{K}|}\int_{\widehat{K}} \Phi\left(\frac{|f|}{\lambda}\right)\,dV \leq 1\right\}.
$$
Note that 
\begin{align}\label{MaximalPointwise}
M_{u}f(z) \lesssim M_{u,\Phi}f(z)
\end{align}
for any Young function $\Phi$ we consider (see \cite{CUMP2011}*{Page 99}).

Given a Young function $\Phi$, its complementary Young function, $\Psi$, is given by 
$$
    \Psi(t):=\sup_{s>0}\{st-\Phi(s)\}.
$$
Assuming $\Phi$ satisfies $\lim_{t\rightarrow \infty}\frac{\Phi(t)}{t}=\infty$, $\Psi$ is a well-defined Young function taking values in $[0,\infty)$. Moreover, we have the following H\"older inequality:
\begin{equation}\label{OrliczHolder}
    \langle fg \rangle_{A}\lesssim \|f\|_{\Phi,A}\|g\|_{\Psi,A}
\end{equation}
for $A\subseteq \mathbb{B}_n$, complementary Young functions $\Phi$ and $\Psi$, and $f, g$ with $\|f\|_{\Phi,A},\|g\|_{\Psi,A}<\infty$. The following is an easy consequence of \eqref{OrliczHolder}.
\begin{lemma}\label{OrliczHolderApplied}
If $A \subseteq \mathbb{B}_n$, $A' \subseteq A$, and $\sigma: A' \rightarrow [0,\infty)$, then
$$
    \langle \sigma \rangle_A \lesssim \frac{\|\sigma\|_{\Phi,A}}{\Psi^{-1}(|A|/|A'|)}.
$$
\end{lemma}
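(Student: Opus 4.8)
The statement is an essentially immediate consequence of the Orlicz--Hölder inequality \eqref{OrliczHolder}, and the plan is to pair $\sigma$ against the characteristic function of $A'$. First I would extend $\sigma$ by zero off $A'$, so that $\langle \sigma \rangle_A = \langle \sigma \chi_{A'}\rangle_A$, where the average is now taken over all of $A$. Applying \eqref{OrliczHolder} with the complementary Young functions $\Phi$ and $\Psi$ to the pair $f = \sigma$ and $g = \chi_{A'}$ gives
$$
    \langle \sigma \rangle_A = \langle \sigma \chi_{A'}\rangle_A \lesssim \|\sigma\|_{\Phi,A}\,\|\chi_{A'}\|_{\Psi,A},
$$
so everything reduces to identifying the Luxemburg norm of the characteristic function $\chi_{A'}$ over $A$.

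The key computation is that $\|\chi_{A'}\|_{\Psi,A} = \Psi^{-1}(|A|/|A'|)^{-1}$. To see this, I would unwind the definition: for $\lambda > 0$, since $\Psi(0)=0$ and $\chi_{A'}$ equals $1$ on $A'$ and $0$ elsewhere,
$$
    \frac{1}{|A|}\int_A \Psi\left(\frac{\chi_{A'}}{\lambda}\right)\,dV = \frac{|A'|}{|A|}\,\Psi\!\left(\frac{1}{\lambda}\right).
$$
This quantity is at most $1$ precisely when $\Psi(1/\lambda) \leq |A|/|A'|$, which, because $\Psi$ is increasing and continuous with a well-defined inverse, is equivalent to $\lambda \geq \Psi^{-1}(|A|/|A'|)^{-1}$. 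Taking the infimum over admissible $\lambda$ yields $\|\chi_{A'}\|_{\Psi,A} = \Psi^{-1}(|A|/|A'|)^{-1}$ exactly. Substituting this into the Hölder estimate above gives the claimed bound
$$
    \langle \sigma \rangle_A \lesssim \frac{\|\sigma\|_{\Phi,A}}{\Psi^{-1}(|A|/|A'|)}.
$$

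There is no serious obstacle here; the only point requiring minor care is the monotonicity argument used to invert $\Psi$ when computing the characteristic-function norm, together with the bookkeeping that $\sigma$ should be read as extended by zero so that the Orlicz--Hölder inequality over $A$ applies cleanly. I would emphasize that the implicit constant comes entirely from \eqref{OrliczHolder}, while the characteristic-function norm is computed as an exact equality.
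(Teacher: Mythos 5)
Your proof is correct and follows exactly the route the paper intends: the paper states Lemma \ref{OrliczHolderApplied} without proof as ``an easy consequence of \eqref{OrliczHolder}," and your argument---pairing $\sigma$ (extended by zero) against $\chi_{A'}$ and computing $\|\chi_{A'}\|_{\Psi,A}=\Psi^{-1}(|A|/|A'|)^{-1}$ exactly from the Luxemburg norm definition---is precisely that consequence, carried out cleanly within the paper's standing assumptions on $\Psi$.
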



\subsection{Reverse H\"older property of $B_{\infty}$ weights satisfying \eqref{DyadicWeightRegularity}}

The following is an analogue of \cite{HPR2012}*{Lemma 2.2} and is key in proving our improved reverse H\"older inequality.
\begin{lemma} \label{ReverseHolderLemma}
Let $K \in \mathcal{D}$. If $\sigma \in B_\infty$ and $0<\delta< \frac{1}{2\alpha[\sigma]_{B_\infty}}$, then
$$
    \langle (M_{K}\sigma)^{1+\delta} \rangle_{\widehat{K}}\leq 2 [\sigma]_{B_\infty} \langle \sigma \rangle_{\widehat{K}}^{1+\delta}.
$$
\begin{proof}
Without loss of generality, we may assume $\sigma$ is supported on $\widehat{K}$. For each $\lambda>0$, define $\Omega_\lambda=\{ M_{K}\sigma>\lambda\}$. Note that $\Omega_\lambda= \bigcup_{j} \widehat{K}_{j,\lambda}$, a disjoint union where the $\widehat{K}_{j,\lambda}$ are maximal dyadic tents contained in $\widehat{K}$ with $\langle \sigma\rangle_{\widehat{K}_{j, \lambda}}>\lambda$. 
Using the distribution function, this decomposition of $\Omega_\lambda$, the definition of $B_{\infty}$, and the maximality of the $\widehat{K}_{j,\lambda}$:
\begin{align*}
\int_{\widehat{K}}( M_{K} \sigma)^{1+\delta} \, dV & = \int_{0}^{\infty} \delta \lambda^{\delta-1} M_{K} \sigma(\Omega_\lambda) \mathop{d \lambda} \\
& = \int_{0}^{\langle \sigma \rangle_{\widehat{K}}} \delta \lambda^{\delta-1} M_{K}\sigma(\widehat{K}) \mathop{d \lambda}+ 
\int_{\langle \sigma \rangle_{\widehat{K}}}^{\infty} \delta \lambda^{\delta-1} M_{K}\sigma(\Omega_\lambda) \mathop{d \lambda}   \\
& = \langle \sigma \rangle_{\widehat{K}}^{\delta} M_{K}\sigma(\widehat{K})+ 
\int_{\langle \sigma \rangle_{\widehat{K}}}^{\infty} \delta \lambda^{\delta-1} \sum_{j} M_{K}\sigma(\widehat{K}_{j,\lambda}) \mathop{d \lambda}\\
& \leq \langle \sigma \rangle_{\widehat{K}}^{\delta} M_{K}\sigma(\widehat{K})+ [\sigma]_{B_\infty}\int_{\langle \sigma \rangle_{\widehat{K}}}^{\infty} \delta \lambda^{\delta-1} \sum_{j} \sigma(\widehat{K}_{j,\lambda}) \mathop{d \lambda}.
\end{align*}

Note that by maximality, each $\widehat{K}_{j,\lambda}$ is contained in a unique dyadic parent $\widehat{K}_{j,\lambda}'$ with the property that $\langle \sigma\rangle_{\widehat{K}_{j,\lambda}'} \leq \lambda.$ Then, using Lemma \ref{DyadicDoubling}, we have
$$
    \langle \sigma\rangle_{\widehat{K}_{j,\lambda}} \leq \alpha \langle \sigma\rangle_{\widehat{K}_{j,\lambda}'} \leq \alpha \lambda.
$$
Using the above inequality, the disjointness of the $\widehat{K}_{j,\lambda}$, and the distribution function, we continue estimating above by
\begin{align*}
& \langle \sigma \rangle_{\widehat{K}}^{\delta} M_{K}\sigma(\widehat{K})+ [\sigma]_{B_\infty} \alpha \delta \int_{\langle \sigma \rangle_{\widehat{K}}}^{\infty} \lambda^{\delta} \sum_{j} |\widehat{K}_{j,\lambda}| \mathop{d \lambda}\\
 & = \langle \sigma \rangle_{\widehat{K}}^{\delta} M_{K}\sigma(\widehat{K})+ [\sigma]_{B_\infty} \alpha \frac{\delta}{1+\delta} \int_{\langle \sigma \rangle_{\widehat{K}}}^{\infty} (1+\delta)\lambda^{\delta} |\Omega_\lambda| \mathop{d \lambda} \\
 & \leq \langle \sigma \rangle_{\widehat{K}}^{\delta} M_{K}\sigma(\widehat{K})+  [\sigma]_{B_\infty} \alpha \frac{\delta}{1+\delta} \int_{\widehat{K}}(M_{K}\sigma)^{1+\delta} \, dV.
\end{align*}
Then, dividing both sides of the inequality by $|\widehat{K}|$ and again using the $B_\infty$ condition, we get
$$
    \langle (M_{K}\sigma)^{1+\delta} \rangle_{\widehat{K}} \leq [\sigma]_{B_\infty} \langle \sigma \rangle_{\widehat{K}}^{1+\delta}+ \frac{\alpha \delta [\sigma]_{B_\infty}}{1+\delta} \langle (M_{K}\sigma)^{1+\delta} \rangle_{\widehat{K}}.
$$ 
 The observation that $\frac{\alpha \delta [\sigma]_{B_\infty}}{1+\delta} \leq \frac{1}{2}$ completes the proof, as the second term can be subtracted over to the other side. 
\end{proof}
\end{lemma}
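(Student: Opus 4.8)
The plan is to adapt the self-improving argument of Hyt\"onen--P\'erez--Rela (\cite{HPR2012}*{Lemma 2.2}) to the present dyadic-tent setting, with Lemma \ref{DyadicDoubling} playing the role of the Euclidean doubling of cubes. First I would normalize so that $\sigma$ is supported on $\widehat{K}$: since $M_K$ only averages over tents $\widehat{K'}$ with $K' \in \mathcal{D}(K)$, replacing $\sigma$ by $\sigma\chi_{\widehat{K}}$ affects neither side, and it makes $M_K\sigma$ supported on $\widehat{K}$ and equal there to the maximal function $M(\sigma\chi_{\widehat{K}})$ appearing in the definition of $B_\infty$. I would also arrange, by a harmless truncation of $\sigma$, that $\langle (M_K\sigma)^{1+\delta}\rangle_{\widehat{K}}<\infty$, since this finiteness is what ultimately licenses the absorption step.

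The core of the proof is a layer-cake analysis of $\int_{\widehat{K}}(M_K\sigma)^{1+\delta}\,dV$. Writing $(M_K\sigma)^{\delta}$ through its distribution function and integrating against $M_K\sigma$ gives
$$
    \int_{\widehat{K}}(M_K\sigma)^{1+\delta}\,dV=\delta\int_0^{\infty}\lambda^{\delta-1}\int_{\Omega_\lambda}M_K\sigma\,dV\,d\lambda,\qquad \Omega_\lambda:=\{M_K\sigma>\lambda\}.
$$
Because $M_K\sigma\ge\langle\sigma\rangle_{\widehat{K}}$ throughout $\widehat{K}$, we have $\Omega_\lambda=\widehat{K}$ for $\lambda<\langle\sigma\rangle_{\widehat{K}}$, while for $\lambda\ge\langle\sigma\rangle_{\widehat{K}}$ the nesting of dyadic tents lets me decompose $\Omega_\lambda=\bigsqcup_j\widehat{K}_{j,\lambda}$ into the maximal tents on which $\langle\sigma\rangle>\lambda$. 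The range $\lambda<\langle\sigma\rangle_{\widehat{K}}$ contributes exactly $\langle\sigma\rangle_{\widehat{K}}^{\delta}\int_{\widehat{K}}M_K\sigma\,dV$, which the $B_\infty$ condition bounds by $[\sigma]_{B_\infty}\langle\sigma\rangle_{\widehat{K}}^{1+\delta}|\widehat{K}|$.

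For the range $\lambda\ge\langle\sigma\rangle_{\widehat{K}}$, the decisive point is that stopping-time maximality forces $M_K\sigma=M(\sigma\chi_{\widehat{K}_{j,\lambda}})$ on each $\widehat{K}_{j,\lambda}$: every tent strictly containing $\widehat{K}_{j,\lambda}$ has $\sigma$-average at most $\lambda<\langle\sigma\rangle_{\widehat{K}_{j,\lambda}}$ and so never attains the supremum there. Applying $B_\infty$ on each $\widehat{K}_{j,\lambda}$ then yields $\int_{\widehat{K}_{j,\lambda}}M_K\sigma\,dV\le[\sigma]_{B_\infty}\sigma(\widehat{K}_{j,\lambda})$. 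Next I would use maximality together with the doubling from Lemma \ref{DyadicDoubling} to bound $\langle\sigma\rangle_{\widehat{K}_{j,\lambda}}\le\alpha\lambda$ (the dyadic parent has average at most $\lambda$ and only the factor $\alpha$ is lost), so that $\sigma(\widehat{K}_{j,\lambda})\le\alpha\lambda|\widehat{K}_{j,\lambda}|$. Summing over $j$ and recognizing $\sum_j|\widehat{K}_{j,\lambda}|=|\Omega_\lambda|$, a second layer-cake identity rewrites $\int_{\langle\sigma\rangle_{\widehat{K}}}^{\infty}\delta\lambda^{\delta}|\Omega_\lambda|\,d\lambda$ as $\frac{\delta}{1+\delta}\int_{\widehat{K}}(M_K\sigma)^{1+\delta}\,dV$.

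Combining the two ranges and dividing by $|\widehat{K}|$ produces the self-improving inequality
$$
    \langle(M_K\sigma)^{1+\delta}\rangle_{\widehat{K}}\le[\sigma]_{B_\infty}\langle\sigma\rangle_{\widehat{K}}^{1+\delta}+\frac{\alpha\delta[\sigma]_{B_\infty}}{1+\delta}\langle(M_K\sigma)^{1+\delta}\rangle_{\widehat{K}}.
$$
The hypothesis $\delta<\frac{1}{2\alpha[\sigma]_{B_\infty}}$ is calibrated precisely so that $\frac{\alpha\delta[\sigma]_{B_\infty}}{1+\delta}\le\alpha\delta[\sigma]_{B_\infty}<\frac12$; absorbing the last term into the left-hand side, using the a priori finiteness, then leaves the claimed bound with constant $2[\sigma]_{B_\infty}$. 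I expect the main obstacle to be the bookkeeping around the localized maximal operator: one must check both that $M_K\sigma$ agrees with the global $M(\sigma\chi_{\widehat{K}})$ on $\widehat{K}$, so that $B_\infty$ applies at the top level, and that it agrees with $M(\sigma\chi_{\widehat{K}_{j,\lambda}})$ on each stopping tent, which is exactly the identity the $B_\infty$ condition is designed to exploit and which hinges on the nesting of tents together with the stopping maximality.
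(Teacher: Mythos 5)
Your proposal is correct and follows essentially the same route as the paper's proof: the same layer-cake decomposition split at $\langle\sigma\rangle_{\widehat{K}}$, the same stopping-time decomposition of $\Omega_\lambda$ into maximal tents, the same use of Lemma \ref{DyadicDoubling} to get $\langle\sigma\rangle_{\widehat{K}_{j,\lambda}}\leq\alpha\lambda$, and the same absorption calibrated by $\delta<\frac{1}{2\alpha[\sigma]_{B_\infty}}$. In fact you make explicit two points the paper leaves implicit --- that stopping maximality localizes $M_K\sigma$ so the $B_\infty$ condition can be applied on each $\widehat{K}_{j,\lambda}$ (only the inequality $M_K\sigma\leq M(\sigma\chi_{\widehat{K}_{j,\lambda}})$ there is needed, not equality), and that an a priori finiteness or truncation argument is required to justify the absorption step.
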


The following is an improved Reverse H\"older inequality for $B_{\infty}$ weights satisfying \eqref{DyadicWeightRegularity}.
\begin{thm}\label{ReverseHolder}
If $\sigma \in B_{\infty}$ satisfies \eqref{DyadicWeightRegularity} and $1<r \leq 1+ \frac{1}{2\alpha [\sigma]_{B_\infty}}$, then $\sigma \in \text{RH}_{r}$ with
$$
[\sigma]_{\text{RH}_r}\leq \frac{2}{1-\rho}c_{\sigma}^{\frac{r-1}{r}}.
$$
\end{thm}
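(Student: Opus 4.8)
The plan is to fix a tent $\widehat{K}$ with $K\in\mathcal{D}$, prove the pointwise-averaged estimate $\langle\sigma^r\rangle_{\widehat{K}}\le 2\bigl(\tfrac{c_\sigma}{1-\rho}\bigr)^{r-1}\langle\sigma\rangle_{\widehat{K}}^r$, and then take the $r$-th root and the supremum over $K\in\mathcal{D}$. Since both sides depend only on $\sigma$ restricted to $\widehat{K}$, I may assume $\sigma$ is supported on $\widehat{K}$, exactly as in Lemma \ref{ReverseHolderLemma}.

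The key idea, and the step I expect to be the main obstacle, is to keep the $B_\infty$-characteristic from surviving into the final constant. Raising the pointwise bound $\sigma\le\tfrac{c_\sigma}{1-\rho}M_K\sigma$ of Lemma \ref{MaximalDyadicControl} to the full power $r$ and then applying Lemma \ref{ReverseHolderLemma} would leave a factor comparable to $\tfrac{c_\sigma}{1-\rho}(2[\sigma]_{B_\infty})^{1/r}$, which is too large in both the power of $c_\sigma$ and the dependence on $[\sigma]_{B_\infty}$. Instead I will apply the pointwise domination \emph{only} to the factor $\sigma^{r-1}$, writing $\sigma^r=\sigma^{r-1}\sigma$ and keeping one copy of $\sigma$ as a measure, to obtain
$$
\int_{\widehat{K}}\sigma^r\,dV\le\Bigl(\tfrac{c_\sigma}{1-\rho}\Bigr)^{r-1}\int_{\widehat{K}}(M_K\sigma)^{r-1}\sigma\,dV.
$$

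It then remains to bound the mixed quantity $\int_{\widehat{K}}(M_K\sigma)^{r-1}\sigma\,dV$ by $2|\widehat{K}|\langle\sigma\rangle_{\widehat{K}}^r$. I will do this with a layer-cake argument parallel to the proof of Lemma \ref{ReverseHolderLemma}, but integrating against the measure $\sigma\,dV$: splitting the distribution integral at the height $\lambda=\langle\sigma\rangle_{\widehat{K}}$ produces the main term $\langle\sigma\rangle_{\widehat{K}}^{r-1}\sigma(\widehat{K})=|\widehat{K}|\langle\sigma\rangle_{\widehat{K}}^r$, while for $\lambda>\langle\sigma\rangle_{\widehat{K}}$ I decompose $\{M_K\sigma>\lambda\}=\bigcup_j\widehat{K}_{j,\lambda}$ into maximal tents and use Lemma \ref{DyadicDoubling} on their dyadic parents to get $\sigma(\widehat{K}_{j,\lambda})\le\alpha\lambda|\widehat{K}_{j,\lambda}|$, hence $\sigma(\{M_K\sigma>\lambda\})\le\alpha\lambda|\{M_K\sigma>\lambda\}|$. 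Passing back through the distribution function yields the term $\tfrac{\alpha(r-1)}{r}\int_{\widehat{K}}(M_K\sigma)^r\,dV$, which I bound by Lemma \ref{ReverseHolderLemma}. The crucial cancellation is that this contributes a factor $\tfrac{2\alpha(r-1)[\sigma]_{B_\infty}}{r}$, and the hypothesis $r-1\le\tfrac{1}{2\alpha[\sigma]_{B_\infty}}$ forces $2\alpha(r-1)[\sigma]_{B_\infty}\le 1$, so this second term is at most $\tfrac{1}{r}|\widehat{K}|\langle\sigma\rangle_{\widehat{K}}^r$ with \emph{no} dependence on $[\sigma]_{B_\infty}$.

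Combining the two estimates gives $\langle\sigma^r\rangle_{\widehat{K}}\le 2\bigl(\tfrac{c_\sigma}{1-\rho}\bigr)^{r-1}\langle\sigma\rangle_{\widehat{K}}^r$; taking the $r$-th root and using $2^{1/r}\le 2$ together with $(1-\rho)^{-(r-1)/r}\le(1-\rho)^{-1}$, valid since $0<1-\rho<1$ and $0<\tfrac{r-1}{r}<1$, yields $\langle\sigma^r\rangle_{\widehat{K}}^{1/r}\le\tfrac{2}{1-\rho}c_\sigma^{(r-1)/r}\langle\sigma\rangle_{\widehat{K}}$. Taking the supremum over $K\in\mathcal{D}$ then gives the claimed bound on $[\sigma]_{\text{RH}_r}$. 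The one technical point to verify is that Lemma \ref{ReverseHolderLemma} may be invoked at the endpoint exponent $\delta=r-1=\tfrac{1}{2\alpha[\sigma]_{B_\infty}}$, which is legitimate because its proof only requires $\tfrac{\alpha\delta[\sigma]_{B_\infty}}{1+\delta}\le\tfrac12$, and this continues to hold there.
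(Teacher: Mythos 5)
Your proposal is correct and follows essentially the same route as the paper's own proof: the same splitting $\sigma^r=\sigma^{r-1}\sigma$ with Lemma \ref{MaximalDyadicControl} applied only to the factor $\sigma^{r-1}$, the same layer-cake decomposition into maximal tents with Lemma \ref{DyadicDoubling} giving $\sigma(\widehat{K}_{j,\lambda})\le\alpha\lambda|\widehat{K}_{j,\lambda}|$, and the same invocation of Lemma \ref{ReverseHolderLemma} to absorb the $[\sigma]_{B_\infty}$ dependence, yielding identical constants. Your closing remark about applying Lemma \ref{ReverseHolderLemma} at the endpoint $\delta=\tfrac{1}{2\alpha[\sigma]_{B_\infty}}$ is a valid observation that the paper passes over silently, and your justification of it is correct.
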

\noindent Theorem \ref{ReverseHolder} extends to multiple dimensions and sharpens a result contained in \cite{APR2019}*{Theorem 1.7}. In particular, the proof in \cite{APR2019} shows that $[\sigma]_{\text{RH}_r}$ depends at worst exponentially on $[\sigma]_{B_{\infty}}$, whereas our constant is independent of $\sigma$ if $c_\sigma$ is bounded. Our proof follows an argument from \cite{HPR2012} of a sharp reverse H\"{o}lder inequality for $A_\infty$ weights on Euclidean space.

\begin{proof}[Proof of Theorem \ref{ReverseHolder}]
Write $r=1+\delta$, where $0<\delta \leq \frac{1}{2\alpha [\sigma]_{B_\infty}}.$ Fix $K \in \mathcal{D}$. We begin by applying Lemma \ref{MaximalDyadicControl}:
$$
    \int_{\widehat{K}} \sigma^{1+\delta} \, dV \leq \left(\frac{c_{\sigma}}{1-\rho}\right)^{\delta} \int_{\widehat{K}} (M_{K} \sigma)^{\delta} \sigma \, dV.
$$

We use the notation of Lemma \ref{ReverseHolderLemma}, letting $\Omega_{\lambda}=\{M_{K}\sigma>\lambda\}=\bigcup_{j}\widehat{K}_{j,\lambda}$. We continue estimating using arguments similar to those in Lemma \ref{ReverseHolderLemma}:
\begin{align*}
\int_{\widehat{K}}(M_{K}\sigma)^{\delta} \sigma \, dV & = \int_{0}^{\infty} \delta \lambda^{\delta-1} \sigma(\Omega_\lambda) \mathop{d \lambda} \\
& = \int_{0}^{\langle \sigma \rangle_{\widehat{K}}} \delta \lambda^{\delta-1} \sigma(\widehat{K}) \mathop{d \lambda}+ 
\int_{\langle \sigma \rangle_{\widehat{K}}}^{\infty} \delta \lambda^{\delta-1} \sigma(\Omega_\lambda) \mathop{d \lambda}   \\
& = \langle \sigma \rangle_{\widehat{K}}^{\delta} \sigma(\widehat{K})+ 
\int_{\langle \sigma \rangle_{\widehat{K}}}^{\infty} \delta \lambda^{\delta-1} \sum_{j} \sigma(\widehat{K}_{j,\lambda}) \mathop{d \lambda}\\
& \leq \langle \sigma \rangle_{\widehat{K}}^{\delta} \sigma(\widehat{K})+ 
\int_{\langle \sigma \rangle_{\widehat{K}}}^{\infty} \alpha \delta \lambda^{\delta} \sum_{j} |\widehat{K}_{j,\lambda}| \mathop{d \lambda}\\
& = \langle \sigma \rangle_{\widehat{K}}^{\delta} \sigma(\widehat{K})+ 
\frac{\alpha \delta}{1+\delta} \int_{\langle \sigma \rangle_{\widehat{K}}}^{\infty}  (1+\delta) \lambda^{\delta} |\Omega_\lambda| \mathop{d \lambda}\\
& \leq \langle \sigma \rangle_{\widehat{K}}^{\delta} \sigma(\widehat{K})+ 
\frac{\alpha \delta}{1+\delta} \int_{\widehat{K}} (M_{K}\sigma)^{1+\delta} \, dV.
\end{align*}
Dividing both sides by $|\widehat{K}|$ and using Lemma \ref{ReverseHolderLemma}, we obtain
\begin{align*}
    \langle \sigma^{1+\delta}\rangle_{\widehat{K}}  &\leq \left(\frac{c_{\sigma}}{1-\rho}\right)^{\delta}\left( 1+ \frac{2\alpha [\sigma]_{B_\infty}\delta}{1+\delta} \right) \langle \sigma \rangle_{\widehat{K}}^{1+\delta}\\
    &\leq 2\left(\frac{c_{\sigma}}{1-\rho}\right)^{\delta}\langle \sigma \rangle_{\widehat{K}}^{1+\delta}.
\end{align*}
Taking a supremum over $K\in\mathcal{D}$ gives
$$
    [\sigma]_{\text{RH}_r}\leq 2^{1/r}\left(\frac{c_{\sigma}}{1-\rho}\right)^{\frac{r-1}{r}}\leq \frac{2}{1-\rho}c_{\sigma}^{\frac{r-1}{r}}.
$$
\end{proof}

\section{Weighted compactness}\label{CompactnessSection}

\subsection{Weighted compactness for general $B_p$ weights}\label{LpCompactnessSubsection}
In proving Theorem \ref{ToeplitzLpCompactness}, we use a version of the Riesz-Kolmorogov characterization of precompact sets adapted to weighted Bergman spaces. We use the notation $r \mathbb{B}_n$ to represent the set
$$
    r\mathbb{B}_n:=\{z \in \mathbb{C}_n:|z|<r\}.
$$ 
For $r \in (0,1)$, we write $r\mathbb{B}_n^c$ to refer to the complemented subset $\mathbb{B}_n \setminus r \mathbb{B}_n.$

The following theorem is contained in \cite{MSWW2021}*{Corollary 1.10}. 
\begin{thm}\label{RieszKolmogorov}
Let $p \in (1,\infty)$ and suppose that $\sigma^{1-p'} \in L^1$. 
A subset $\mathcal{F} \subseteq \mathcal{A}^p_{\sigma}$ is precompact if and only if
$$
    \lim_{r\rightarrow 1^-}\sup_{f \in \mathcal{F}}\int_{r\mathbb{B}_n^c}|f|^p \sigma \, dV=0.
$$
\end{thm}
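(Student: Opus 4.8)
The plan is to prove the two implications separately, with the forward (necessity) direction being an instance of the classical Fréchet--Kolmogorov argument and the reverse (sufficiency) direction carrying the real content through the holomorphicity of the elements of $\mathcal{F}$.

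For necessity I would argue as follows. Precompactness gives, for each $\varepsilon>0$, a finite $\varepsilon$-net $f_1,\dots,f_N\in\mathcal{F}$. Since each $|f_i|^p\sigma\in L^1$, absolute continuity of the integral yields some $r_0$ with $\int_{r\mathbb{B}_n^c}|f_i|^p\sigma\,dV<\varepsilon^p$ for all $i$ and all $r\ge r_0$. Then for arbitrary $f\in\mathcal{F}$, choosing $f_i$ with $\|f-f_i\|_{L^p_\sigma}<\varepsilon$ and applying the triangle inequality in $L^p_\sigma(r\mathbb{B}_n^c)$ gives $\|f\|_{L^p_\sigma(r\mathbb{B}_n^c)}<2\varepsilon$ uniformly in $f$, which is exactly the claimed tail decay. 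This step uses nothing about holomorphicity and applies verbatim when $p=1$.

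For sufficiency, the key preliminary step is to upgrade the tail condition to boundedness of $\mathcal{F}$ in $\mathcal{A}^p_\sigma$ and then to local uniform boundedness; this is where holomorphicity and the hypothesis on $\sigma$ enter. I would fix $r_1$ with $\sup_{f\in\mathcal{F}}\int_{r_1\mathbb{B}_n^c}|f|^p\sigma\,dV<1$ and set $r_2=\tfrac{1+r_1}{2}$. By the maximum modulus principle $\sup_{|z|\le r_2}|f(z)|=\sup_{|z|=r_2}|f(z)|$, so it suffices to bound $|f(z)|$ for $|z|=r_2$. For such $z$, the Euclidean ball $B(z,\rho)$ with $\rho=\tfrac{1-r_1}{4}$ lies in $r_1\mathbb{B}_n^c$, so the sub-mean value inequality for the subharmonic function $|f|$ followed by Hölder's inequality gives $|f(z)|\lesssim\rho^{-2n}\big(\int_{B(z,\rho)}|f|^p\sigma\,dV\big)^{1/p}\|\sigma^{1-p'}\|_{L^1}^{1/p'}\le A$, with $A$ independent of $f$; when $p=1$ I replace the Hölder factor by $\|\sigma^{-1}\|_{L^\infty}$, using that $\sigma$ is bounded below. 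Since $\sigma(r_2\mathbb{B}_n)<\infty$ and the tail beyond $r_2$ is controlled by the tail beyond $r_1$, this yields $\sup_{f\in\mathcal{F}}\|f\|_{\mathcal{A}^p_\sigma}<\infty$, and the same pointwise estimate (now localized to arbitrary compacta) gives local uniform boundedness of $\mathcal{F}$.

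Given local uniform boundedness, Montel's theorem makes $\mathcal{F}$ a normal family, so any sequence $\{f_k\}\subseteq\mathcal{F}$ has a subsequence $\{f_{k_j}\}$ converging uniformly on compacta to a holomorphic $f$, which lies in $\mathcal{A}^p_\sigma$ by Fatou. To conclude $f_{k_j}\to f$ in $\mathcal{A}^p_\sigma$ I would split at a radius $r$: the tail $\int_{r\mathbb{B}_n^c}|f_{k_j}-f|^p\sigma$ is uniformly small by the decay hypothesis (together with Fatou applied to $f$), while the inner piece obeys $\int_{r\mathbb{B}_n}|f_{k_j}-f|^p\sigma\le\sigma(r\mathbb{B}_n)\sup_{|z|\le r}|f_{k_j}-f|^p\to0$ by uniform convergence on the compact $\overline{r\mathbb{B}_n}$ and local integrability of $\sigma$. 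Letting $j\to\infty$ and then $r\to1^-$ yields norm convergence; since $\mathcal{A}^p_\sigma$ is complete under the stated hypotheses, every sequence in $\mathcal{F}$ has a convergent subsequence and $\mathcal{F}$ is precompact. The main obstacle is the boundedness step: the tail condition alone says nothing about $f$ on a fixed compact set, and it is precisely the interplay of the maximum modulus principle (which moves the supremum over a ball out to a sphere lying inside the controlled tail region) with the sub-mean value property and the integrability of $\sigma^{1-p'}$ that rules out unboundedness; the remainder is a routine normal-families-plus-tail-splitting argument.
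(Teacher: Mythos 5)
Your proposal is correct and follows essentially the same route as the paper's proof: the sufficiency direction uses the identical chain of ideas (sub-mean value property plus H\"older against $\sigma^{1-p'}$, or the lower bound on $\sigma$ when $p=1$, to get uniform bounds on a sphere inside the controlled tail region, then the maximum modulus principle, Montel's theorem, and an inner-ball/tail splitting for norm convergence). The only differences are cosmetic: you prove necessity by a finite $\varepsilon$-net rather than the paper's argument by contradiction, and you use Fatou's lemma to place the Montel limit in $\mathcal{A}^p_\sigma$ and to control its tail, where the paper passes to limits over annuli $r'\mathbb{B}_n\setminus r\mathbb{B}_n$ and takes a supremum over $r'$.
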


We also have a Riesz-Kolmogorov type result for weak-type spaces and general weights.
\begin{thm}\label{RKWeakType}
Let $\sigma$ be a weight and $\mathcal{F} \subseteq \mathcal{A}^{1,\infty}_{\sigma} $ be a bounded set. If the functions in $\mathcal{F}$ are uniformly bounded on compact subsets of $\mathbb{B}_n$ and
$$
    \lim_{r \rightarrow 1^{-}} \sup_{f \in \mathcal{F}} \sup_{\lambda>0} \lambda \sigma\left(\{z:|f(z)|>\lambda\} \cap r\mathbb{B}_n^c\right)=0,
$$
then $\mathcal{F}$ is precompact in $L^{1,\infty}_{\sigma}$.
\end{thm}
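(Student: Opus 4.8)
The plan is to establish only the (asserted) sufficiency direction, mirroring the reverse direction of the proof of Theorem \ref{RieszKolmogorov} but adapting every step to the quasi-normed setting of $L^{1,\infty}_{\sigma}$. Given any sequence $\{f_j\}_{j=1}^{\infty}\subseteq\mathcal{F}$, I would first invoke the hypotheses that the elements of $\mathcal{F}$ are holomorphic and uniformly bounded on compact subsets of $\mathbb{B}_n$; by Montel's theorem this produces a subsequence $\{f_{j_k}\}_{k=1}^{\infty}$ converging uniformly on compact subsets of $\mathbb{B}_n$ to a holomorphic function $g$. The remaining goals are to show $g\in\mathcal{A}^{1,\infty}_{\sigma}$ and that $f_{j_k}\to g$ in $L^{1,\infty}_{\sigma}$.

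The central tool replacing the triangle inequality is the quasi-triangle inequality for the weak norm: from the inclusion $\{|F+G|>\lambda\}\subseteq\{|F|>\lambda/2\}\cup\{|G|>\lambda/2\}$ one obtains $\|F+G\|_{L^{1,\infty}_{\sigma}}\leq 2\left(\|F\|_{L^{1,\infty}_{\sigma}}+\|G\|_{L^{1,\infty}_{\sigma}}\right)$. The second main tool is a lower-semicontinuity estimate via Fatou's lemma: since $f_{j_k}\to g$ pointwise, for each $\mu>0$ and $\varepsilon>0$ one has $\chi_{\{|g|>\mu+\varepsilon\}}\leq\liminf_k\chi_{\{|f_{j_k}|>\mu\}}$, so applying Fatou's lemma on $r\mathbb{B}_n^c$ together with the hypothesized uniform tail decay of $\mathcal{F}$ gives, after letting $\varepsilon\to 0^+$ and using continuity from below,
$$
    \sup_{\mu>0}\mu\,\sigma\left(\{|g|>\mu\}\cap r\mathbb{B}_n^c\right)\leq\sup_{f\in\mathcal{F}}\sup_{\nu>0}\nu\,\sigma\left(\{|f|>\nu\}\cap r\mathbb{B}_n^c\right),
$$
whose right side tends to $0$ as $r\to 1^-$. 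Thus $g$ inherits the tail condition. Combined with the fact that $g$ is bounded on $r\mathbb{B}_n$, being a locally uniform limit of uniformly bounded functions, and that $\sigma(r\mathbb{B}_n)<\infty$ by local integrability of $\sigma$, the quasi-triangle inequality yields $g\in\mathcal{A}^{1,\infty}_{\sigma}$.

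To prove convergence I would fix $r\in(0,1)$ and split $\{|f_{j_k}-g|>\lambda\}$ into its intersections with $r\mathbb{B}_n$ and $r\mathbb{B}_n^c$. On the relatively compact piece $r\mathbb{B}_n$, uniform convergence makes $|f_{j_k}-g|<\delta$ for large $k$, so the weak-norm contribution there is at most $\delta\,\sigma(r\mathbb{B}_n)$, which is small since $\sigma(r\mathbb{B}_n)<\infty$. On the tail $r\mathbb{B}_n^c$, the quasi-triangle inequality bounds the contribution by $2\sup_{\mu>0}\mu\,\sigma(\{|f_{j_k}|>\mu\}\cap r\mathbb{B}_n^c)+2\sup_{\mu>0}\mu\,\sigma(\{|g|>\mu\}\cap r\mathbb{B}_n^c)$, and both terms are controlled uniformly in $k$ by the tail decay of $\mathcal{F}$ and the transferred tail decay of $g$ established above. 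Choosing $r$ close to $1$ first, and then $k$ large, yields $\|f_{j_k}-g\|_{L^{1,\infty}_{\sigma}}\to 0$, proving precompactness.

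The main obstacle I anticipate is the failure of the triangle inequality: the weak norm obeys only the factor-of-two quasi-triangle inequality, so unlike in the $L^p$ case the error terms cannot be combined cleanly, and I must ensure the quasi-triangle constant does not obstruct the splitting into the compact region and the tail, nor the transfer of the tail estimate to $g$ through Fatou's lemma. Keeping the order of quantifiers correct, namely choosing $r$ (and thereby controlling both tails uniformly in $k$) before choosing $k$ (to control the compact part), is the crux of the argument.
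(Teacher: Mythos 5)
Your proof is correct, and its skeleton matches the paper's: Montel's theorem (justified by the uniform boundedness on compact sets) produces a subsequence converging locally uniformly to a holomorphic $g$, and then a splitting into $r\mathbb{B}_n$ and $r\mathbb{B}_n^c$, the quasi-triangle inequality on the tail, and the quantifier order ``$r$ first, then $k$'' finish the argument. The one genuinely different step is how the tail decay is transferred to the limit $g$. The paper does this by applying the quasi-triangle inequality on the relatively compact annuli $r'\mathbb{B}_n\setminus r\mathbb{B}_n$, where uniform convergence kills the error term, and then taking a supremum over $r'\in(r,1)$; this yields the tail bound for $g$ with a factor of $2$ relative to $\sup_{j}$ of the tails of the $f_j$. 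You instead use lower semicontinuity of distribution functions under pointwise convergence, $\chi_{\{|g|>\mu+\varepsilon\}}\leq\liminf_k\chi_{\{|f_{j_k}|>\mu\}}$, together with Fatou's lemma directly on $r\mathbb{B}_n^c$ and continuity from below as $\varepsilon\to 0^+$. Your route buys two small improvements: the tail bound for $g$ comes with constant $1$ rather than $2$, and, since you certify $g\in\mathcal{A}^{1,\infty}_{\sigma}$ by combining this tail bound with $\|g\chi_{r\mathbb{B}_n}\|_{L^{1,\infty}_{\sigma}}\leq M\,\sigma(r\mathbb{B}_n)<\infty$ (local boundedness of $g$ plus local integrability of $\sigma$), you never invoke the hypothesis that $\mathcal{F}$ is bounded in $\mathcal{A}^{1,\infty}_{\sigma}$ --- the paper uses that hypothesis in its first displayed estimate --- so your argument shows in passing that this hypothesis is redundant given the other two. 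The remaining bookkeeping in your write-up is sound: splitting $\{|f_{j_k}-g|>\lambda\}$ over the disjoint sets $r\mathbb{B}_n$ and $r\mathbb{B}_n^c$ costs no constant, the factor $2$ from the quasi-triangle inequality appears only in the tail term where both summands are controlled uniformly in $k$, and the bound $\delta\,\sigma(r\mathbb{B}_n)$ on the compact piece plays the same role as the paper's bound by $\int_{r\mathbb{B}_n}|g-f_j|\sigma\,dV$.
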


\begin{proof} Let $\{f_j\}_{j=1}^{\infty}$ be any sequence in $\mathcal{F}.$ Note that by Montel's theorem, by passing to a subsequence if necessary, we may assume that $\{f_j\}_{j=1}^{\infty}$ converges uniformly on compact sets to a holomorphic function $g$. We first show that $g \in L^{1,\infty}_{\sigma}.$ It is straightforward to verify that for any fixed $r \in (0,1)$, uniform convergence implies

\begin{equation}\sup_{\lambda>0} \lambda \sigma\left(\{z:|g(z)|>\lambda\}\cap r\mathbb{B}_n \right) \leq 2\sup_{f \in  \mathcal{F}}\|f\|_{L^{1,\infty}_{\sigma}}. \label{1}\end{equation}
 In particular, we have, for any $j$,
\begin{align*}& \sup_{\lambda>0} \lambda \sigma\left(\{z:|g(z)|>\lambda\}\cap r\mathbb{B}_n \right) \\
&  \quad\leq 2\sup_{\lambda>0} \lambda \sigma\left(\{z:|f_j(z)|>\lambda\}\cap r\mathbb{B}_n \right) + 2\sup_{\lambda>0} \lambda \sigma\left(\{z:|g(z)-f_j(z)|>\lambda\}\cap r\mathbb{B}_n \right).
\end{align*}
The second term can be dominated by
$$2 \int_{r \mathbb{B}_n} |g-f_j|\sigma \, dV,$$
which goes to $0$ by uniform convergence. Taking a $\limsup$ in $j$ on both sides, we obtain \eqref{1}. Then take the supremum over all $r$ and interchange the two suprema to get that $g \in L^{1,\infty}_{\sigma}.$ 

It remains to show that 
$$
    \lim_{j \rightarrow \infty} \sup_{\lambda>0} \lambda \sigma\left(\{z:|g(z)-f_j(z)|>\lambda\} \right)=0.
$$
Note that the argument above shows that for any $r \in (0,1)$ and $r' \in (r,1),$ we have
$$
    \sup_{\lambda>0} \lambda \sigma\left(\{z:|g(z)|>\lambda\} \cap (r'\mathbb{B}_n \setminus r \mathbb{B}_n) \right)\leq  2 \sup_{j} \sup_{\lambda>0} \lambda \sigma\left(\{z:|f_j(z)|>\lambda\}\cap (r'\mathbb{B}_n \setminus r \mathbb{B}_n)  \right).
$$
Taking a supremum over such $r'$ yields
\begin{equation}
  \sup_{\lambda>0} \lambda \sigma\left(\{z:|g(z)|>\lambda\} \cap  r \mathbb{B}_n^c \right)\leq  2 \sup_{j} \sup_{\lambda>0} \lambda \sigma\left(\{z:|f_j(z)|>\lambda\}\cap r \mathbb{B}_n^c \right). 
\label{2} \end{equation}
Given $\varepsilon>0$, the hypothesis and \eqref{2} imply we can choose $r$ sufficiently close to $1$ so that

$$ \sup_{j} \sup_{\lambda>0} \lambda \sigma\left(\{z:|f_j(z)|>\lambda\} \cap r\mathbb{B}_n^c\right)<\frac{\varepsilon}{2}$$
and
$$  \sup_{\lambda>0} \lambda \sigma\left(\{z:|g(z)|>\lambda\} \cap r\mathbb{B}_n^c\right)< \frac{\varepsilon}{2}.$$
Note that 
\begin{align*}
& \sup_{\lambda>0} \lambda \sigma\left(\{z:|g(z)-f_j(z)|>\lambda\} \right)\\
& \quad\leq \sup_{\lambda>0} \lambda \sigma\left(\{z:|g(z)-f_j(z)|>\lambda\} \cap r\mathbb{B}_n\right)+ \sup_{\lambda>0} \lambda \sigma\left(\{z:|g(z)-f_j(z)|>\lambda\} \cap r\mathbb{B}_n^c\right)\\
& \quad\leq \sup_{\lambda>0} \lambda \sigma\left(\{z:|g(z)-f_j(z)|>\lambda\} \cap r\mathbb{B}_n\right)+ 2\sup_{\lambda>0} \lambda \sigma\left(\{z:|g(z)|>\lambda\} \cap r\mathbb{B}_n^c\right)\\
& \quad\quad+ 2\sup_{\lambda>0} \lambda \sigma\left(\{z:|f_j(z)|>\lambda\} \cap r\mathbb{B}_n^c\right)\\
& \quad< \sup_{\lambda>0} \lambda \sigma\left(\{z:|g(z)-f_j(z)|>\lambda\} \cap r\mathbb{B}_n\right) + \varepsilon.
\end{align*}
Taking a limsup as $j \rightarrow \infty$ on both sides and using the same reasoning as before for the convergence on compact subsets, we obtain

$$\limsup_{j \rightarrow \infty} \sup_{\lambda>0} \lambda \sigma\left(\{z:|g(z)-f_j(z)|>\lambda\} \right)< \varepsilon.$$
Since $\varepsilon$ was arbitrary, the result follows. 
\end{proof}

Theorem \ref{RieszKolmogorov} and Theorem \ref{RKWeakType} admit the following corollary, which is our primary use.
\begin{cor}\label{OperatorCompactnessTest}
Let $p \in [1,\infty)$, $\sigma \in B_p$, and $T$ be a bounded operator from $L^p_{\sigma}$ to $\mathcal{A}^p_{\sigma}$. Then $T$ is compact on $L^p_{\sigma}$ if and only if
$$
    \lim_{r \rightarrow 1^{-}}\sup_{\substack{f \in L^p_{\sigma}\\ \|f\|_{L^p_{\sigma}}\leq 1}}\int_{r\mathbb{B}_n^c} |Tf|^p \sigma \,dV=0.
$$
Moreover, if $\sigma$ is a weight and $T$ is a bounded operator from $L^1_{\sigma}$ to $\mathcal{A}^{1,\infty}_{\sigma}$, the elements of $\{Tf : \|f\|_{L^{1}_{\sigma}}\leq 1\}$ are uniformly bounded on compact subsets of $\mathbb{B}_n$, and 
$$
    \limsup_{r\rightarrow 1^-}\sup_{\substack{f \in L^1_{\sigma} \\ \|f\|_{L^1_{\sigma}}\leq 1}}\sup_{\lambda>0}\lambda \sigma\left(\{z:|Tf(z)|>\lambda\} \cap r\mathbb{B}_n^c\right)=0,
$$
then $T$ is compact from $L^1_{\sigma}$ to $L^{1,\infty}_{\sigma}$.
\end{cor}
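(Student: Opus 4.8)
The plan is to recognize this corollary as a direct repackaging of the two Riesz--Kolmogorov theorems, once compactness of $T$ is translated into precompactness of the image of a single bounded set. Recall from the definition of compactness in the introduction that $T$ is compact precisely when $T(\overline{A})$ is precompact in the target for every bounded $A$. Since any such $A$ is contained in a scalar multiple of the closed unit ball $B:=\{f:\|f\|_{L^p_{\sigma}}\leq 1\}$, and since precompactness (total boundedness) is preserved under scalar multiplication and passage to subsets, it suffices to check precompactness of the single family $\mathcal{F}:=\{Tf:\|f\|_{L^p_{\sigma}}\leq 1\}=T(B)$. Thus the entire task reduces to verifying the hypotheses of Theorem \ref{RieszKolmogorov} (respectively Theorem \ref{RKWeakType}) for this particular $\mathcal{F}$.

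For the first assertion, with $\sigma\in B_p$, I would begin by noting that $\mathcal{F}\subseteq\mathcal{A}^p_{\sigma}$ since $T$ maps $L^p_{\sigma}$ into $\mathcal{A}^p_{\sigma}$. Testing the $B_p$ condition at $z=0$, where $\mathcal{T}_0=\mathbb{B}_n$, shows that finiteness of $[\sigma]_{B_p}$ forces $\sigma^{1-p'}\in L^1$ when $p>1$ and forces $\sigma$ to be bounded below by a positive constant when $p=1$; these are exactly the standing hypotheses of Theorem \ref{RieszKolmogorov}. In particular $\mathcal{A}^p_{\sigma}$ is a closed subspace of $L^p_{\sigma}$, so precompactness of $\mathcal{F}$ in $\mathcal{A}^p_{\sigma}$ coincides with precompactness in $L^p_{\sigma}$. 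Applying Theorem \ref{RieszKolmogorov} to $\mathcal{F}$ and substituting $g=Tf$ converts the criterion
$$
    \lim_{r\to1^-}\sup_{g\in\mathcal{F}}\int_{r\mathbb{B}_n^c}|g|^p\sigma\,dV=0
$$
into precisely the stated uniform decay condition over $\|f\|_{L^p_{\sigma}}\leq 1$. Because Theorem \ref{RieszKolmogorov} is an equivalence, both implications of the ``if and only if'' follow at once.

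For the weak-type assertion, I would set $\mathcal{F}=\{Tf:\|f\|_{L^1_{\sigma}}\leq 1\}\subseteq\mathcal{A}^{1,\infty}_{\sigma}$, which is bounded since $T:L^1_{\sigma}\to\mathcal{A}^{1,\infty}_{\sigma}$ is bounded. The two further hypotheses of Theorem \ref{RKWeakType}---uniform boundedness of the members of $\mathcal{F}$ on compact subsets of $\mathbb{B}_n$, and the vanishing of $\lim_{r\to1^-}\sup_{f}\sup_{\lambda>0}\lambda\sigma(\{|Tf|>\lambda\}\cap r\mathbb{B}_n^c)$---are exactly the assumptions in the statement. Theorem \ref{RKWeakType} then gives that $\mathcal{F}$ is precompact in $L^{1,\infty}_{\sigma}$, and by the reduction above this is the compactness of $T$ from $L^1_{\sigma}$ to $L^{1,\infty}_{\sigma}$.

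The only genuinely nontrivial point, and hence the step to get right, is the reduction together with the identification of precompactness in $\mathcal{A}^p_{\sigma}$ with precompactness in $L^p_{\sigma}$. This rests on the closedness of $\mathcal{A}^p_{\sigma}$ inside $L^p_{\sigma}$: for $p>1$ this is immediate since it is a Banach space, while for $p=1$ it follows because $\sigma$ bounded below makes $L^1_{\sigma}$-convergence imply unweighted $L^1$-convergence, hence locally uniform convergence of the holomorphic iterates and holomorphy of the limit. Everything else is bookkeeping, namely confirming that $\sigma\in B_p$ supplies the standing hypotheses of Theorem \ref{RieszKolmogorov} and matching the quantified decay conditions to those theorems term by term.
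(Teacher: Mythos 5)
Your proof is correct and follows essentially the same route as the paper, which states this corollary as an immediate consequence of Theorem \ref{RieszKolmogorov} and Theorem \ref{RKWeakType} applied to $\mathcal{F}=T(\{f:\|f\|\leq 1\})$. Your added bookkeeping—reducing compactness to precompactness of the image of the unit ball, checking that $\sigma\in B_p$ supplies the standing hypotheses of Theorem \ref{RieszKolmogorov} (the paper itself remarks this holds for $B_p$ weights), and noting the closedness of $\mathcal{A}^p_{\sigma}$ in $L^p_{\sigma}$—is exactly the intended justification.
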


\begin{proof}[Proof of Theorem \ref{ToeplitzLpCompactness}] Note that if $p \in (1,\infty)$ and $\sigma \in B_p$, then $L^p_{\sigma}\subseteq L^1$ by H\"{o}lder's inequality, and if $\sigma \in B_1$, then $L^1_\sigma \subseteq L^1$ since $B_1$ weights are bounded from below by a positive constant. Since the Bergman projection 
produces holomorphic functions from $L^1$ data, our Toeplitz operators actually map $L^p_\sigma$ to $\mathcal{A}^p_\sigma$ for $p \in (1,\infty)$ and $\sigma \in B_p$ and map $L^1_{\sigma}$ to $\mathcal{A}^{1,\infty}_{\sigma}$ for $\sigma \in B_1$. Therefore, we can apply Corollary \ref{OperatorCompactnessTest} to the operator $T_u$. If the symbol $u$ vanishes on $\partial \mathbb{B}_n$, a routine argument by splitting $f \in L^p_{\sigma}$ as $f=f \chi_{r \mathbb{B}_n}+f\chi_{r \mathbb{B}_n^c}$ for appropriately chosen $r \in (0,1)$ shows that the corresponding uniform decay condition is satisfied. By Corollary \ref{OperatorCompactnessTest}, Theorem \ref{ToeplitzLpCompactness} is proven.
\end{proof}

The following lemma is well-known. The proof in the case of the unit disk in $\mathbb{C}$ given in \cite{Zhu1}*{Proposition 6.14} readily extends to $\mathbb{B}_n$.
\begin{lemma}\label{BoundaryValues} 
If $u \in L^{\infty}$ is continuous on $\overline{\mathbb{B}}_n\setminus K$ for some compact $K \subseteq \mathbb{B}_n$ and $z_0\in\partial\mathbb{B}_n$, then
$$
    \lim_{z\rightarrow z_0}\widetilde{T_u}(z) = u(z_0).
$$
\end{lemma}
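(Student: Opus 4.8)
The plan is to first recognize $\widetilde{T_u}(z)$ as an honest average of $u$ against a probability measure, and then to exploit the fact that these measures concentrate at the boundary point $z_0$ as $z\to z_0$. Since $P$ is the orthogonal (hence self-adjoint) projection of $L^2$ onto $\mathcal{A}^2$ and $k_z\in\mathcal{A}^2$ satisfies $Pk_z=k_z$, I would first rewrite
$$
\widetilde{T_u}(z)=\langle P(uk_z),k_z\rangle=\langle uk_z,Pk_z\rangle=\langle uk_z,k_z\rangle=\int_{\mathbb{B}_n}u(w)|k_z(w)|^2\,dV(w).
$$
The reproducing property gives $\int_{\mathbb{B}_n}|k_z(w)|^2\,dV(w)=\|k_z\|_{\mathcal{A}^2}^2=1$, so $|k_z|^2\,dV$ is a probability measure and $\widetilde{T_u}(z)$ is a weighted average of $u$. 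It therefore suffices to show that
$$
\widetilde{T_u}(z)-u(z_0)=\int_{\mathbb{B}_n}\big(u(w)-u(z_0)\big)|k_z(w)|^2\,dV(w)\longrightarrow 0 \quad\text{as } z\to z_0.
$$

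The cleanest route is a change of variables $w=\phi_z(\zeta)$. Using the standard transformation identity $1-\overline{z}\,\phi_z(\zeta)=\tfrac{1-|z|^2}{1-\overline{z}\zeta}$ together with the real Jacobian $\big(\tfrac{1-|z|^2}{|1-\overline{z}\zeta|^2}\big)^{n+1}$ of $\phi_z$ (see \cite{Ru1980}), one checks that $|k_z(w)|^2\,dV(w)$ is exactly the pushforward of $dV(\zeta)$ under $\phi_z$, so that
$$
\widetilde{T_u}(z)=\int_{\mathbb{B}_n}u(\phi_z(\zeta))\,dV(\zeta).
$$
Next I would verify that $\phi_z(\zeta)\to z_0$ as $z\to z_0$ for each fixed $\zeta\in\mathbb{B}_n$: a direct computation with the explicit formula for $\phi_z$ shows the numerator tends to $z_0(1-\overline{z_0}\zeta)$ and the denominator to $1-\overline{z_0}\zeta$, which is nonzero since $|\overline{z_0}\zeta|\le|\zeta|<1$. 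Because $z_0\in\partial\mathbb{B}_n$ lies in the open set $\overline{\mathbb{B}}_n\setminus K$ on which $u$ is continuous, and $\phi_z(\zeta)$ eventually leaves the compact set $K$, continuity yields $u(\phi_z(\zeta))\to u(z_0)$ pointwise in $\zeta$; boundedness of $u$ then lets me pass to the limit by dominated convergence, giving $\widetilde{T_u}(z)\to u(z_0)$.

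The only genuine subtlety is ensuring the mass of $|k_z|^2$ (equivalently, the point $\phi_z(\zeta)$) stays in the region where $u$ is continuous, which is automatic because $z_0$ is a boundary point at positive distance from $K$. If one prefers to avoid the automorphism machinery, the same obstacle reappears as a tail estimate in a direct approximate-identity argument: choose $\delta$ by continuity so that $|u(w)-u(z_0)|<\varepsilon$ for $|w-z_0|<\delta$ (the near piece is then $\le\varepsilon$ since $|k_z|^2\,dV$ is a probability measure), and bound the far piece using $\|u\|_{L^\infty}$ times $\int_{|w-z_0|\ge\delta}|k_z(w)|^2\,dV(w)$. This tail integral is the main point: the elementary identity $\operatorname{Re}(1-\overline{z_0}w)=\tfrac12\big(1-|w|^2+|w-z_0|^2\big)\ge\tfrac12|w-z_0|^2$ gives $|1-\overline{z_0}w|\ge\delta^2/2$ on the tail, hence $|1-\overline{z}w|\ge\delta^2/4$ once $|z-z_0|<\delta^2/4$, so $|k_z(w)|^2\le(1-|z|^2)^{n+1}(\delta^2/4)^{-2(n+1)}$ there, and the factor $(1-|z|^2)^{n+1}\to0$ forces the tail to vanish. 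Either way the result follows.
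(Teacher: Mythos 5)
Your proposal is correct and follows essentially the same route as the paper: the paper's proof simply invokes Stroethoff's identity $\widetilde{T_u}(z)=\int_{\mathbb{B}_n}u(\phi_z(w))\,d\widetilde{V}(w)$ and concludes by dominated convergence together with the fact that $\phi_z(w)\rightarrow z_0$ as $z\rightarrow z_0$, which is exactly your main argument. The only differences are that you derive that identity yourself (via self-adjointness of $P$ and the change of variables $w=\phi_z(\zeta)$) and verify the pointwise convergence explicitly, while your closing tail-estimate sketch is a genuine, more elementary alternative that the paper does not need.
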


We say that two Banach (quasi-Banach) spaces $\mathcal{X}_0$ and $\mathcal{X}_1$ form a Banach (quasi-Banach) couple if there is a Hausdorff topological linear space 
containing each $\mathcal{X}_j$. For $\theta \in (0,1)$ and $q \in (0,\infty]$, let $(\mathcal{X}_0,\mathcal{Y}_1)_{\theta,q}$ denote the real interpolation space, see \cite{BerghLofstrom}*{Chapter 3} for definitions. The following compactness interpolation theorem is given in \cite{CP1998}*{Theorem 3.1}. 
\begin{prop}\label{CompactnessRealInterpolation}
Let $\mathcal{X}=(\mathcal{X}_0,\mathcal{X}_1)$ and $\mathcal{Y}=(\mathcal{Y}_0,\mathcal{Y}_1)$ be quasi-Banach couples. If $T$ is a bounded linear operator from $\mathcal{X}_1$ to $\mathcal{Y}_1$ and compact from $\mathcal{X}_0$ to $\mathcal{Y}_0$, then $T$ is compact from $(\mathcal{X}_0,\mathcal{X}_1)_{\theta,q}$ to $(\mathcal{Y}_0,\mathcal{Y}_1)_{\theta,q}$ for all $\theta \in (0,1)$ and $q \in (0,\infty]$, 
\end{prop}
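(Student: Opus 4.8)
This is a standard result in the real interpolation theory of compact operators, and in the stated quasi-Banach generality the cleanest course of action is to invoke \cite{CP1998}*{Theorem 3.1} directly. To indicate the mechanism I would use, recall that for a quasi-Banach couple $\mathcal{X}=(\mathcal{X}_0,\mathcal{X}_1)$ the space $(\mathcal{X}_0,\mathcal{X}_1)_{\theta,q}$ is described through Peetre's $K$-functional
$$
    K(t,x;\mathcal{X})=\inf_{x=x_0+x_1}\left(\|x_0\|_{\mathcal{X}_0}+t\|x_1\|_{\mathcal{X}_1}\right),
$$
via the quasi-norm $\|x\|_{\theta,q}=\|t^{-\theta}K(t,x;\mathcal{X})\|_{L^q(dt/t)}$, with the evident supremum modification when $q=\infty$. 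Since quasi-Banach spaces are metrizable (by the Aoki--Rolewicz theorem their topology is induced by an equivalent $p$-norm for some $p\in(0,1]$) and complete, precompactness of $T(B)$ for a bounded set $B\subseteq(\mathcal{X}_0,\mathcal{X}_1)_{\theta,q}$ is equivalent to every sequence in $T(B)$ admitting a convergent subsequence, and this is what I would establish.

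The core of the argument is a diagonal extraction exploiting the compactness at the first endpoint. Fixing a bounded sequence $\{x_n\}\subseteq B$, for each dyadic scale $t=2^{j}$ with $j\in\mathbb{Z}$ I would select near-optimal decompositions $x_n=x_{n,0}(2^j)+x_{n,1}(2^j)$ realizing $K(2^j,x_n;\mathcal{X})$ up to a factor $2$. The bound $\|x_{n,0}(2^j)\|_{\mathcal{X}_0}\leq 2K(2^j,x_n;\mathcal{X})$ shows that $\{x_{n,0}(2^j)\}_n$ is bounded in $\mathcal{X}_0$ for each fixed $j$, so the compactness of $T:\mathcal{X}_0\to\mathcal{Y}_0$ lets me pass to a subsequence along which $\{Tx_{n,0}(2^j)\}_n$ converges in $\mathcal{Y}_0$; diagonalizing over $j\in\mathbb{Z}$ then yields a single subsequence that works simultaneously for all dyadic scales. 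Using the standard transference estimate $K(t,Tx;\mathcal{Y})\lesssim K(ct,x;\mathcal{X})$, with constants depending on $\|T\|_{\mathcal{X}_0\to\mathcal{Y}_0}$ and $\|T\|_{\mathcal{X}_1\to\mathcal{Y}_1}$, I would bound $\|Tx_n-Tx_m\|_{\theta,q}$ by summing the $K$-functional of the difference over dyadic scales, splitting each term into the converging $\mathcal{Y}_0$-components and the $\mathcal{Y}_1$-components controlled by boundedness.

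The main obstacle is precisely the one-sided hypothesis: compactness is assumed only at the endpoint $\mathcal{X}_0\to\mathcal{Y}_0$, so the contribution of the $\mathcal{Y}_1$-components, governed merely by the boundedness of $T:\mathcal{X}_1\to\mathcal{Y}_1$, does not obviously vanish, and the naive split above leaves a residual term of size comparable to $K(t,x_n)+K(t,x_m)$ that is bounded but not small. Overcoming this requires Cwikel's refinement: one controls the tails $t\to 0$ and $t\to\infty$ using the genuine integrability of $t^{-\theta}K(t,x_n)$ in $L^q(dt/t)$, uniform over the bounded set, and then approximates $T$ in the interpolation operator quasi-norm by operators to which a two-sided compactness principle applies, effectively upgrading the bounded endpoint to a compact one on the relevant finite range of scales. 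Assembling the vanishing tail contributions with the convergence at the middle scales shows that $\{Tx_n\}$ is Cauchy along the extracted subsequence, which gives compactness of $T$ on $(\mathcal{X}_0,\mathcal{X}_1)_{\theta,q}$. Given the delicacy of this last step, I would in practice cite \cite{CP1998}*{Theorem 3.1} rather than reproduce the full argument.
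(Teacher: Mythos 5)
Your proposal matches the paper exactly: the paper gives no proof of this proposition, stating only that it ``is given in \cite{CP1998}*{Theorem 3.1},'' which is precisely the citation you fall back on. Your accompanying sketch of the $K$-functional and diagonal-extraction mechanism (and your honest identification of where one-sided compactness forces Cwikel's refinement) is a reasonable outline of the cited argument, but since both you and the paper ultimately rest on the Cobos--Persson reference, the approaches coincide.
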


\begin{proof}[Proof of Corollary \ref{ContinuousLpCompactness}]
We show the following implications $(b) \Rightarrow (a) \Rightarrow (d) \Rightarrow (b)$ and $(c) \Leftrightarrow (d)$. The first implication $(b) \Rightarrow (a)$ is trivial. 

The implication $(a) \Rightarrow (d)$ follows from a well-known argument. In particular, let 
$$
    k_z^{(p)}(w):=\frac{(1-|z|^2)^{\frac{n+1}{p'}}}{(1-z\overline{w})^{n+1}}.
$$
Notice that $k_z^{(p)}$ are in $\mathcal{A}^p$ with $\|k_z^{(p)}\|_{L^p}\approx 1$, see \cite{IMW2015}*{pp. 1564}. Since the $k_z^{(p)}$ converge weakly to $0$ in $\mathcal{A}^{p}$ as $|z|\rightarrow 1^{-}$ and since $T_u$ is compact on $L^{p}$ (and hence compact on $\mathcal{A}^{p})$, we have $\|T_uk_z^{p}\|_{L^{p}}\rightarrow 0$ as $|z|\rightarrow 1^-$. Hence
$$
    \widetilde{T_u}(z) = |\langle T_uk_z,k_z\rangle|= |\langle T_uk_z^{(p)},k_z^{(p')}\rangle|\leq \|T_uk_z^{(p)}\|_{L^{p}}\|k_z^{(p')}\|_{L^{p'}}\rightarrow 0
$$
as $|z|\rightarrow1^-$.

The implications $(d) \Rightarrow (b)$ and $(d) \Rightarrow (c)$ follow from Lemma \ref{BoundaryValues} and Theorem \ref{ToeplitzLpCompactness}.

It remains to prove $(c) \Rightarrow (d)$. Since $u \in L^{\infty}$, $T_u$ is bounded on, say, $L^4$, and by assumption, $T_u$ is compact from $L^1$ to $L^{1,\infty}$. We therefore have by Proposition \ref{CompactnessRealInterpolation}, that $T_u$ is compact on $L^2$, noting that \cite{BerghLofstrom}*{Theorem 5.3.1} gives
$$
    (L^1,L^4)_{\frac{2}{3},2}=(L^{1,\infty},L^4)_{\frac{2}{3},2}=L^2.
$$ 
The argument we used in showing $(a) \Rightarrow (d)$ in the case $p=2$ completes the proof.
\end{proof}

\subsection{Weighted compactness characterization for reverse-H\"older $B_p$ weights}\label{FullLpCompactnessSubsection}

The work of Rubio de Francia has revolutionized the theory of weighted inequalities, most famously with his extrapolation theorem. Although extrapolation is typically phrased in terms of Muckenhoupt $A_p$ weights, the version of extrapolation in \cite{RdF1984}*{Theorem 3} contains the following statement for $B_p$ weights as a special case. See also \cite{CUMP2011}*{Chapter 2}.
\begin{prop}\label{BpBoundednessExtrapolation}
Let $T$ be a linear operator and $p_0 \in [1,\infty)$. If $T$ is bounded on $L_{\sigma_0}^{p_0}$ for all $\sigma_0 \in B_{p_0}$, then $T$ is bounded on $L_{\sigma}^p$ for all $p \in (1,\infty)$ and all $\sigma \in B_{p}$.
\end{prop}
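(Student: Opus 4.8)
The plan is to adapt the Rubio de Francia extrapolation machinery, exactly as in \cite{RdF1984} and \cite{CUMP2011}*{Chapter 2}, to the B\'ekoll\`e--Bonami setting; the entire Euclidean $A_p$ argument transfers once one verifies that the $B_p$ classes enjoy the same structural features as the Muckenhoupt classes. The engine is the \emph{Rubio de Francia iteration algorithm} built from the weighted maximal operator $M_\sigma$ of Lemma \ref{WeightedMaximal}. Given an auxiliary exponent $s>1$ and $0\le h \in L^s_\sigma$, one sets
$$
    \mathcal{R}h := \sum_{k=0}^{\infty} \frac{M_\sigma^k h}{\big(2\|M_\sigma\|_{L^s_\sigma\to L^s_\sigma}\big)^k},
$$
which converges precisely because Lemma \ref{WeightedMaximal} gives $\|M_\sigma\|_{L^s_\sigma\to L^s_\sigma}$ finite and, crucially, \emph{independent of} $\sigma$. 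By construction $\mathcal{R}h$ satisfies $h\le \mathcal{R}h$ pointwise, $\|\mathcal{R}h\|_{L^s_\sigma}\le 2\|h\|_{L^s_\sigma}$, and the $B_1$-type bound $M_\sigma(\mathcal{R}h)\le 2\|M_\sigma\|_{L^s_\sigma\to L^s_\sigma}\,\mathcal{R}h$ relative to the measure $\sigma\,dV$.

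Fixing $\sigma \in B_p$, I would then split into the two cases $p<p_0$ and $p>p_0$, the case $p=p_0$ being the hypothesis. When $p<p_0$, I apply the algorithm with $s=(p_0/p)'$ to a function manufactured from $|f|$ and $\sigma$; the resulting product weight $W=(\mathcal{R}(\cdots))\sigma$ lies in $B_{p_0}$ because $B_p\subseteq B_{p_0}$ with $[\sigma]_{B_{p_0}}\le [\sigma]_{B_p}$ in this range, one invokes the hypothesis $\|Tf\|_{L^{p_0}_{W}}\le C([W]_{B_{p_0}})\|f\|_{L^{p_0}_{W}}$, and a single application of H\"older's inequality with exponents $p_0/p$ and $(p_0/p)'$ together with the norm bound on $\mathcal{R}$ recovers $\|Tf\|_{L^p_\sigma}\lesssim \|f\|_{L^p_\sigma}$. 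When $p>p_0$, I instead linearize the norm by duality: since $p/p_0>1$,
$$
    \|Tf\|_{L^p_\sigma}^{p_0}=\big\||Tf|^{p_0}\big\|_{L^{p/p_0}_\sigma}=\int_{\mathbb{B}_n}|Tf|^{p_0}\,h\,\sigma\,dV
$$
for some $0\le h$ with $\|h\|_{L^{(p/p_0)'}_\sigma}=1$. Here a direct $M_\sigma$-iteration multiplied by $\sigma$ only yields control by $[\sigma]_{B_{p_0}}$, which is \emph{not} dominated by $[\sigma]_{B_p}$ when $p>p_0$; so one must pass to the dual side. Using $\sigma^{1-p'}\in B_{p'}$ and $p'<p_0'$, the same algorithm applied with $M_{\sigma^{1-p'}}$ manufactures a weight $W$ whose $B_{p_0}$-characteristic (accessed through $W^{1-p_0'}\in B_{p_0'}$) is controlled by $[\sigma]_{B_p}$, to which the hypothesis again applies, after which the norm estimate on the iteration closes the bound. (When $p_0=1$ only the $p>p_0$ branch is needed.)

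The main obstacle is entirely structural: verifying that the weight $W$ produced by the algorithm genuinely lies in $B_{p_0}$ with $[W]_{B_{p_0}}$ controlled by $[\sigma]_{B_p}$. This is the $B_p$-analogue of the $A_p$ factorization/product lemma, and its proof rests on three facts, all available in the present setting. First, the characteristic-independent boundedness of $M_\sigma$ on $L^s_\sigma$ from Lemma \ref{WeightedMaximal} supplies the $B_1$-type property of $\mathcal{R}h$. Second, the self-duality $\sigma\in B_p \Leftrightarrow \sigma^{1-p'}\in B_{p'}$ with comparable characteristics, immediate from the definition of $[\sigma]_{B_p}$, is what makes the case $p>p_0$ go through. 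Third, the interchangeability of the $B_p$ and dyadic $B_p^d$ characteristics established by the proposition of Section \ref{Preliminaries} comparing $[\sigma]_{B_p}$ and $[\sigma]_{B_p^d}$ lets every average over a Carleson tent $\mathcal{T}_z$ be replaced by an average over a dyadic tent $\widehat{K}$, where the maximal operator and the nesting/doubling structure of $\mathcal{D}$ are available. Granting these, the computation bounding $[W]_{B_{p_0}}$ reduces to the manipulation of tent averages (using that $v\ge \operatorname*{essinf}_{\widehat{K}} v$ and that $(1-p_0')(p_0-1)=-1$) and is identical to the Euclidean one, so the theorem follows.
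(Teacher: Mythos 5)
Your overall architecture (Rubio de Francia iteration, the split $p<p_0$ versus $p>p_0$, duality in the second case) is the standard and correct one, and two of your three ``structural facts'' (the duality $\sigma\in B_p\Leftrightarrow\sigma^{1-p'}\in B_{p'}$ and the comparability of $[\sigma]_{B_p}$ with $[\sigma]_{B_p^d}$) are fine; for context, the paper itself gives no proof but quotes the statement from \cite{RdF1984}*{Theorem 3} (see also \cite{CUMP2011}, \cite{PR2013}). However, your argument has a genuine gap at exactly the step you call the main obstacle, and your first ``fact'' is the wrong ingredient. Building the algorithm from the Doob-type operator $M_\sigma$ of Lemma \ref{WeightedMaximal} produces a majorant $H=\mathcal{R}h$ satisfying only the \emph{$\sigma$-weighted} condition $\sigma(\widehat{K})^{-1}\int_{\widehat{K}}H\sigma\,dV\lesssim \operatorname{ess\,inf}_{\widehat{K}}H$, whereas the product-weight computation you claim is ``identical to the Euclidean one'' needs the \emph{Lebesgue}-average condition $\langle H\rangle_{\widehat{K}}\lesssim \operatorname{ess\,inf}_{\widehat{K}}H$. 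Concretely, for $p<p_0$ and $W=H^{p-p_0}\sigma$, the factor $\langle W\rangle_{\widehat{K}}$ is handled by $H\ge \operatorname{ess\,inf}H$, but in $\langle W^{1-p_0'}\rangle_{\widehat{K}}^{p_0-1}=\langle H^{(p_0-p)/(p_0-1)}\sigma^{1-p_0'}\rangle_{\widehat{K}}^{p_0-1}$ the function $H$ enters with a \emph{positive} power integrated against $dV$, precisely on the part of $\widehat{K}$ where $\sigma^{1-p_0'}=\sigma^{-1/(p_0-1)}$ is large, i.e.\ where $\sigma$ is small; there $H$ can be arbitrarily large without disturbing any $\sigma$-average (take $H=1+N\chi_E$ with $\sigma(E)\le \sigma(\widehat{K})/N$ but $|E|\approx|\widehat{K}|$), since $M_\sigma$ only sees $\sigma$-mass. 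Hence $[W]_{B_{p_0}}$ is \emph{not} controlled by $[\sigma]_{B_p}$ and $\|M_\sigma\|$, and the hypothesis cannot be invoked uniformly. Your dual-side variant for $p>p_0$ inherits the same defect: iterating $M_{\sigma^{1-p'}}$ yields only $\sigma^{1-p'}$-weighted control — note it fixes constants, so a constant duality function $h$ produces a weight that is a fixed power of $\sigma$, which need not lie in $B_{p_0}$ at all. The characteristic-independence of $\|M_\sigma\|$, which you call crucial, is a red herring.

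What the Euclidean proof actually consumes at this step is Muckenhoupt's theorem for the \emph{unweighted} maximal operator, and that is the ingredient you must supply: the tent maximal operator $M$ of Section \ref{Preliminaries} (Lebesgue averages over $\widehat{K}$) is bounded on $L^p_\sigma$ for $\sigma\in B_p$, $1<p<\infty$. This is true and can be proved from the paper's toolkit by Lerner's pointwise trick: with $\sigma'=\sigma^{1-p'}$ one checks $(Mf)^{p-1}\le [\sigma]_{B_p^d}\,M_\sigma\bigl(M_{\sigma'}(|f|/\sigma')^{p-1}\sigma^{-1}\bigr)$ over each dyadic system, so Lemma \ref{WeightedMaximal} gives $\|M\|_{L^p_\sigma\to L^p_\sigma}\lesssim [\sigma]_{B_p^d}^{p'/p}$; but it is not Lemma \ref{WeightedMaximal}, is stated nowhere in the paper, and never appears in your outline. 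With it, the classical proof transfers: for $p<p_0$, run $\mathcal{R}h=\sum_k M^kh/(2\|M\|_{L^p_\sigma\to L^p_\sigma})^k$, so that $H$ is a genuine $B_1$ weight and $[H^{p-p_0}\sigma]_{B_{p_0}}\le[H]_{B_1}^{p_0-p}[\sigma]_{B_p}$ by the tent-average computation; for $p>p_0$, run the iteration with $Sg=\bigl(M(g^{q}\sigma)/\sigma\bigr)^{1/q}$, $q=\tfrac{p-1}{p-p_0}$, which is bounded on $L^{(p/p_0)'}_\sigma$ precisely because $M$ is bounded on $L^{p'}_{\sigma^{1-p'}}$ and $\sigma^{1-p'}\in B_{p'}$, and whose output satisfies $H^{q}\sigma\in B_1$, whence $[H\sigma]_{B_{p_0}}\le[H^{q}\sigma]_{B_1}^{1/q}[\sigma]_{B_p}^{1/q'}$ (when $p_0=1$ one has $q=1$ and $H\sigma\in B_1=B_{p_0}$ directly). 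These Lebesgue-$B_1$ building blocks and the factorization inequality are what close the argument, exactly as in \cite{CUMP2011}*{Chapter 2}; until you prove or cite the weighted bound for $M$ and rebuild the algorithm on it, the proposal does not establish the proposition.
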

\noindent Pott and Reguerra proved a version of Proposition \ref{BpBoundednessExtrapolation} for $B_p$ weights on the upper half plane in \cite{PR2013}*{Proposition 4.4}. While their quantitative result was necessary for studying optimal dependence on $B_p$ characteristics, we will only need the version we have stated.

We recall the definition of Calder\'on's complex interpolation spaces. 
Let $\mathcal{X}_1$ and $\mathcal{X}_2$ form a Banach couple. A Banach space $\mathcal{X}$ is an intermediate space between $\mathcal{X}_1$ and $\mathcal{X}_2$ if 
$$
    \mathcal{X}_1\cap\mathcal{X}_2\subseteq \mathcal{X} \subseteq \mathcal{X}_1+\mathcal{X}_2
$$
with continuous inclusions, and is an interpolation space between $\mathcal{X}_1$ and $\mathcal{X}_2$ if additionally any linear mapping on $\mathcal{X}_1+\mathcal{X}_2$ which is bounded on $\mathcal{X}_j$ for $j=0,1$ is also bounded on $\mathcal{X}$. Let $S=\{z \in \mathbb{C}: 0<\text{Re}(z)<1\}$. Define $F(\mathcal{X}_0,\mathcal{X}_1)$ to be the space of all functions from $\overline{S}$ into $\mathcal{X}_0+\mathcal{X}_1$ with the properties that each $f \in F(\mathcal{X}_0,\mathcal{X}_1)$ is bounded and continuous on $\overline{S}$, analytic in $S$ (where $f$ analytic means that $\phi\circ f$ is analytic for any bounded linear functional $\phi$ on $\mathcal{X}_0+\mathcal{X}_1$), and the functions $y \mapsto f(j+iy)$, $j=0,1$, are continuous from $\mathbb{R}$ into $\mathcal{X}_j$. Then $F(\mathcal{X}_0,\mathcal{X}_1)$ is a Banach space with respect to the norm
$$
    \|f\|_{F(\mathcal{X}_1,\mathcal{X}_2)}:=\max\left\{\sup_{y \in \mathbb{R}}\|f(iy)\|_{\mathcal{X}_0},\sup_{y \in \mathbb{R}}\|f(1+iy)\|_{\mathcal{X}_1}\right\}.
$$
Given $\theta \in [0,1]$, the interpolation space $[\mathcal{X}_0,\mathcal{X}_1]_{\theta}$ is the Banach space consisting of all $x \in \mathcal{X}_0+\mathcal{X}_1$ such that $x=f(\theta)$ for some $f \in F(\mathcal{X}_0,\mathcal{X}_1)$ with norm 
$$
    \|x\|_{[\mathcal{X}_0,\mathcal{X}_1]_{\theta}}:=\inf\{\|f\|_{F(\mathcal{X}_0,\mathcal{X}_1)}: x=f(\theta)\}.
$$

The following interpolation theorem was proved by Cwikel and Kalton in \cite{CK1995}. 
\begin{prop}\label{CompactnessInterpolation}
Let $(\mathcal{X}_0,\mathcal{X}_1)$ and $(\mathcal{Y}_0,\mathcal{Y}_1)$ be Banach couples and $T$ be a linear operator that is bounded from $\mathcal{X}_0+\mathcal{X}_1$ to $\mathcal{Y}_0+\mathcal{Y}_1$ and from $\mathcal{X}_j$ to $\mathcal{Y}_j$ for $j=0,1$. If $T$ is compact from $\mathcal{X}_1$ to $\mathcal{Y}_1$, then $T$ is compact from $[\mathcal{X}_0,\mathcal{X}_1]_{\theta}$ to $[\mathcal{Y}_0,\mathcal{Y}_1]_{\theta}$ for any $\theta \in (0,1)$, provided at least one of the following conditions is satisfied:
\begin{enumerate}
\addtolength{\itemsep}{0.2cm}
    \item $\mathcal{X}_1$ has the unconditional martingale differences (UMD) property,
    \item $\mathcal{X}_1$ is reflexive and $\mathcal{X}_1=[\mathcal{X}_0,E]_{\alpha}$ for some Banach space $E$ and some $\alpha \in (0,1)$,
    \item $\mathcal{Y}_1=[\mathcal{Y}_0,F]_{\beta}$ for some Banach space $F$ and some $\beta \in (0,1)$, or
    \item $\mathcal{X}_0$ and $\mathcal{X}_1$ are complexified Banach lattices of measurable functions on a common measure space.
\end{enumerate}
\end{prop}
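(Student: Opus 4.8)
The plan is to deduce compactness on $[\mathcal{X}_0,\mathcal{X}_1]_\theta$ from two soft facts together with one genuinely delicate construction. The soft facts are that the compact operators form a norm-closed subspace of the bounded operators, and that Calderón's interpolation inequality
$$
\|S\|_{[\mathcal{X}_0,\mathcal{X}_1]_\theta\to[\mathcal{Y}_0,\mathcal{Y}_1]_\theta}\le \|S\|_{\mathcal{X}_0\to\mathcal{Y}_0}^{1-\theta}\,\|S\|_{\mathcal{X}_1\to\mathcal{Y}_1}^{\theta}
$$
holds for every $S$ bounded on both endpoints. Consequently, it suffices to produce operators $T_n$ that are compact on the interpolation space (for instance, finite rank as maps on $\mathcal{X}_0+\mathcal{X}_1$ and bounded on both endpoints) with $\|T-T_n\|_{\mathcal{X}_1\to\mathcal{Y}_1}\to 0$ while $\sup_n\|T-T_n\|_{\mathcal{X}_0\to\mathcal{Y}_0}<\infty$; the inequality then forces $\|T-T_n\|_{[\mathcal{X}_0,\mathcal{X}_1]_\theta\to[\mathcal{Y}_0,\mathcal{Y}_1]_\theta}\to 0$, and $T$ inherits compactness as a norm limit of compacts. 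In the Banach lattice case it is cleaner to verify total boundedness of $T$ applied to the unit ball directly, but the spirit is identical.

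First I would manufacture the approximants at the $\mathcal{X}_1$-endpoint: since $T$ maps the unit ball of $\mathcal{X}_1$ to a totally bounded subset of $\mathcal{Y}_1$, one obtains finite-rank $T_n x=\sum_k \phi_{n,k}(x)\,y_{n,k}$ with $\|T-T_n\|_{\mathcal{X}_1\to\mathcal{Y}_1}\to 0$. The representation of $[\mathcal{X}_0,\mathcal{X}_1]_\theta$ through the analytic family $F(\mathcal{X}_0,\mathcal{X}_1)$ from the excerpt, combined with the fundamental lemma that permits the choice $\|f\|_{F}\approx\|f(\theta)\|_\theta$, is exactly the device that transfers endpoint information to the interior point $\theta$. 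The difficulty is that for $T_n$ to be bounded on the $\mathcal{X}_0$-endpoint with a \emph{uniformly} controlled norm, one needs the functionals $\phi_{n,k}$ to lie in $(\mathcal{X}_0+\mathcal{X}_1)^{*}$ and the vectors $y_{n,k}$ in $\mathcal{Y}_0\cap\mathcal{Y}_1$, with norms that do not blow up in $n$. Arbitrary finite-rank approximants at the $\mathcal{X}_1$-endpoint carry no such guarantee.

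This is precisely where the four hypotheses enter, each furnishing a tailored ``approximation of the identity'' compatible with the complex method. When $\mathcal{X}_0,\mathcal{X}_1$ are Banach lattices (condition (4)), I would invoke Calderón's product description $[\mathcal{X}_0,\mathcal{X}_1]_\theta=\mathcal{X}_0^{1-\theta}\mathcal{X}_1^{\theta}$ and truncate through the order structure, excising where the relevant functions are very large or very small; the truncations are compact-compatible and their tails are uniformly small at both endpoints. When a reiteration identity $\mathcal{X}_1=[\mathcal{X}_0,E]_\alpha$ holds with $\mathcal{X}_1$ reflexive (condition (2)), or the analogue on the target side $\mathcal{Y}_1=[\mathcal{Y}_0,F]_\beta$ holds (condition (3)), I would reiterate so that $[\mathcal{X}_0,\mathcal{X}_1]_\theta$ is itself realized as an interpolation space of the enlarged scale; reflexivity supplies the weak compactness needed to extract convergent subsequences, and the built-in interpolation structure automatically tames the $\mathcal{X}_0$-endpoint norms. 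When $\mathcal{X}_1$ is UMD (condition (1)), I would use the boundedness of Haar or martingale-difference projections on the vector-valued boundary spaces underlying the analytic representation to obtain uniformly bounded finite-rank projections $P_n\to I$ on the couple, set $T_n=TP_n$, and use reflexivity of UMD spaces to upgrade the strong convergence $P_n\to I$ to operator-norm convergence $TP_n\to T$ against the compact $T$.

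I expect the genuine obstacle to be exactly the simultaneous endpoint control demanded above. In complete generality compactness does \emph{not} interpolate under the complex method, so one cannot merely approximate $T$ at the $\mathcal{X}_1$-endpoint and hope the $\mathcal{X}_0$-endpoint norms behave; each structural hypothesis must be leveraged to build approximants whose endpoint operator norms are simultaneously controlled, and verifying the requisite uniform estimates through the analytic-function representation and the fundamental lemma is the technical heart of the matter. For this reason I would follow \cite{CK1995} and isolate a single abstract approximation lemma, then verify it separately under each of (1)--(4), rather than attempt one uniform treatment.
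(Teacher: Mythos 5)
You should first know that the paper contains no proof of this proposition to compare against: it is quoted as a black box from Cwikel and Kalton \cite{CK1995}, so your attempt must stand on its own. Judged that way, it is a correct reduction followed by a deferral, not a proof. The reduction---norm-closedness of the compact operators plus the Calder\'on interpolation inequality applied to $T-T_n$, so that it suffices to build approximants $T_n$ that are compact on the interpolation space, converge to $T$ in the $\mathcal{X}_1\to\mathcal{Y}_1$ norm, and stay uniformly bounded from $\mathcal{X}_0$ to $\mathcal{Y}_0$---is fine and standard. But constructing such $T_n$ under each of hypotheses (1)--(4) is the entire content of the theorem, and your treatment of each case is a single sentence of intent (``I would invoke\ldots'', ``I would reiterate\ldots'', ``I would use\ldots'') ending with an explicit deferral to \cite{CK1995}. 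None of the four cases is actually verified, and these verifications are precisely what you yourself call the technical heart.

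There are also concrete errors in the parts that are specific. Your opening step produces finite-rank $T_n$ with $\|T-T_n\|_{\mathcal{X}_1\to\mathcal{Y}_1}\to 0$ from compactness alone; by Grothendieck's characterization, such approximation is available for every compact operator only when the target $\mathcal{Y}_1$ has the approximation property, which is not assumed here (conditions (1) and (4) constrain only the domain couple, and spaces failing AP exist even among reflexive spaces), so the plan cannot start in the stated generality. In the lattice case, the formula $[\mathcal{X}_0,\mathcal{X}_1]_\theta=\mathcal{X}_0^{1-\theta}\mathcal{X}_1^{\theta}$ is not valid for arbitrary complexified Banach lattices: in general the complex interpolation space is only the closure of $\mathcal{X}_0\cap\mathcal{X}_1$ in the Calder\'on product, with equality requiring extra hypotheses such as order continuity of one of the norms, and any truncation argument has to respect this. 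In the UMD case, the UMD property of $\mathcal{X}_1$ yields uniformly bounded vector-valued Fourier-analytic projections on function spaces over the boundary of the strip, which is not the same as uniformly bounded finite-rank projections $P_n\to I$ acting on the couple itself; transferring the former to the latter is a nontrivial step you do not supply. Finally, your assertion that ``in complete generality compactness does not interpolate under the complex method'' overstates what is known: no counterexample exists, and whether one-sided compactness always passes to $[\cdot,\cdot]_\theta$ is a long-standing open problem---which is exactly why \cite{CK1995}, and the extrapolation-of-compactness route of \cite{HL2020} used elsewhere in this paper, impose structural hypotheses like (1)--(4).
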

\noindent Clearly, condition (4) of Proposition \ref{CompactnessInterpolation} is satisfied when $\mathcal{X}_j= \mathcal{A}^{p_j}_{\sigma_j}$ for any $p_j \in [1,\infty)$ and any weight $\sigma_j$, so we are able to interpolate compactness in our setting.

We have the following version of the Stein-Weiss interpolation with change in measure.
\begin{prop}\label{SteinWeissInterpolation}
If $p_0, p_1 \in [1,\infty)$, $\theta \in (0,1)$, and $\sigma_0,\sigma_1$ are weights on $\mathbb{B}_n$ with $\sigma_0 \in B_{p_0}$ and $\sigma_1 \in B_{p_1}$, then
$$
    [\mathcal{A}_{\sigma_0}^{p_0},\mathcal{A}_{\sigma_1}^{p_1}]_{\theta}=\mathcal{A}_{\sigma}^p, 
$$
where 
$$
    \frac{1}{p}=\frac{\theta}{p_0}+\frac{1-\theta}{p_1} \quad\quad\text{and}\quad\quad \sigma^{\frac{1}{p}}=\sigma_0^{\frac{\theta}{p_0}}\sigma_1^{\frac{1-\theta}{p_1}}.
$$
Moreover, the norms of the spaces $[\mathcal{A}_{\sigma_0}^{p_0},\mathcal{A}_{\sigma_1}^{p_1}]_{\theta}$ and $\mathcal{A}_{\sigma}^p$  are equivalent.
\end{prop}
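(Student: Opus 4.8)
The plan is to deduce the statement from the classical Stein--Weiss interpolation theorem for weighted Lebesgue spaces and then transfer the resulting identity down to the holomorphic subspaces, using the Bergman projection as a bounded idempotent acting consistently across the interpolation scale.

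First I would observe that the exponents arrange so that, setting $a:=p\theta/p_0$ and $b:=p(1-\theta)/p_1$, we have $a,b\ge 0$, $a+b=1$, and $\sigma=\sigma_0^{a}\sigma_1^{b}$. The Stein--Weiss theorem with change of measure (\cite{BerghLofstrom}*{Theorem 5.5.3}) then gives
$$[L_{\sigma_0}^{p_0},L_{\sigma_1}^{p_1}]_{\theta}=L_{\sigma}^{p}$$
with equivalent norms. This step is purely measure-theoretic and uses none of the B\'ekoll\`e--Bonami structure.

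Next I would check that $\sigma\in B_p$. Writing $1-p'=-p'/p$ and using $\sigma=\sigma_0^a\sigma_1^b$, an application of H\"older's inequality on each Carleson tent $\mathcal{T}_z$ factors the product $\langle\sigma\rangle_{\mathcal{T}_z}\langle\sigma^{1-p'}\rangle_{\mathcal{T}_z}^{p-1}$ into appropriate powers of $\langle\sigma_0\rangle_{\mathcal{T}_z}\langle\sigma_0^{1-p_0'}\rangle_{\mathcal{T}_z}^{p_0-1}$ and $\langle\sigma_1\rangle_{\mathcal{T}_z}\langle\sigma_1^{1-p_1'}\rangle_{\mathcal{T}_z}^{p_1-1}$, yielding $[\sigma]_{B_p}\le[\sigma_0]_{B_{p_0}}^{\theta}[\sigma_1]_{B_{p_1}}^{1-\theta}$; this is the exact analogue of the stability of the Muckenhoupt class under this interpolation. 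In particular, by Theorem B the Bergman projection $P$ is simultaneously bounded on $L_{\sigma_0}^{p_0}$, $L_{\sigma_1}^{p_1}$, and $L_{\sigma}^{p}$.

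Finally I would invoke the retract principle for complex interpolation. The single integral operator $P$ is bounded on each endpoint space, satisfies $P^2=P$, and reproduces holomorphic functions; since $L_{\sigma_j}^{p_j}\subseteq L^1$ for $\sigma_j\in B_{p_j}$ (as noted after Corollary \ref{OperatorCompactnessTest}) the reproducing formula applies, so the range of $P$ on $L_{\sigma_j}^{p_j}$ is exactly $\mathcal{A}_{\sigma_j}^{p_j}$. Thus the couple $(\mathcal{A}_{\sigma_0}^{p_0},\mathcal{A}_{\sigma_1}^{p_1})$ is a retract of $(L_{\sigma_0}^{p_0},L_{\sigma_1}^{p_1})$ with retraction $P$ and coretraction the inclusion, and \cite{BerghLofstrom}*{Theorem 6.4.2} gives
$$[\mathcal{A}_{\sigma_0}^{p_0},\mathcal{A}_{\sigma_1}^{p_1}]_{\theta}=P\big([L_{\sigma_0}^{p_0},L_{\sigma_1}^{p_1}]_{\theta}\big)=P(L_{\sigma}^{p})$$
with equivalent norms. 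Because $\sigma\in B_p$, the right-hand side is precisely $\mathcal{A}_{\sigma}^{p}$, which finishes the argument. I expect the main obstacle to be the careful verification that $P$ is a genuine idempotent with the correct range at every point of the scale---i.e.\ that one and the same operator $P$ is bounded and reproducing on all of $L_{\sigma_0}^{p_0}$, $L_{\sigma_1}^{p_1}$, and $L_{\sigma}^{p}$---so that the retract theorem may be applied consistently; by comparison the computation that $\sigma\in B_p$ is routine once the dual exponents are tracked.
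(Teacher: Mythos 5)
Your proposal is correct in substance and is built from exactly the same three ingredients as the paper's proof: the Stein--Weiss identity $[L_{\sigma_0}^{p_0},L_{\sigma_1}^{p_1}]_{\theta}=L_{\sigma}^{p}$ from \cite{BerghLofstrom}*{Theorem 5.5.3}, the weighted boundedness of the Bergman projection (Theorem B), and the fact that $P$ reproduces holomorphic functions in $L^1$ (applicable because $\sigma_j\in B_{p_j}$ forces $L^{p_j}_{\sigma_j}\subseteq L^1$). The only real difference is packaging. The paper, following \cite{AP2015}, performs the transfer by hand: for the containment $\mathcal{X}_\theta\subseteq\mathcal{A}^p_\sigma$ it interpolates the inclusions, and for the reverse containment it takes the analytic family $F_\zeta$ furnished by Stein--Weiss with $F_\theta=f$ and replaces it by $G_\zeta:=P F_\zeta$, which lies in $F(\mathcal{A}^{p_0}_{\sigma_0},\mathcal{A}^{p_1}_{\sigma_1})$ and still passes through $f$ at $\theta$ by the reproducing property. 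You instead cite the retract principle, whose proof is precisely this computation (interpolate the inclusion one way, interpolate $P$ the other way, and use $P\circ\iota=\mathrm{id}$ on the holomorphic subspaces). Your version is more modular and makes clear that nothing beyond functoriality of $[\cdot,\cdot]_\theta$ is being used; the paper's version is self-contained and does not require checking the hypotheses of a cited black box --- in particular the point you correctly identify as the crux, namely that one and the same operator $P$ is defined consistently on $L^{p_0}_{\sigma_0}+L^{p_1}_{\sigma_1}\subseteq L^1$ and is bounded and reproducing at every point of the scale.

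Two minor points. First, your claimed bound $[\sigma]_{B_p}\le[\sigma_0]_{B_{p_0}}^{\theta}[\sigma_1]_{B_{p_1}}^{1-\theta}$ has the wrong exponents: H\"older with exponents $1/a$ and $1/b$, where $a=p\theta/p_0$ and $b=p(1-\theta)/p_1$ are the quantities you yourself introduce, yields $[\sigma]_{B_p}\le[\sigma_0]_{B_{p_0}}^{a}[\sigma_1]_{B_{p_1}}^{b}$. Only the qualitative membership $\sigma\in B_p$ is needed, so this is harmless (the paper also leaves this as a ``straightforward computation''). Second, your retraction $P:L^{p_j}_{\sigma_j}\to\mathcal{A}^{p_j}_{\sigma_j}$ requires strong-type boundedness, which Theorem B supplies only for $p_j>1$; for $p_j=1$ and $\sigma_j\in B_1$ one has only the weak-type bound, so the retract structure degenerates at that endpoint. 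This is not a defect relative to the paper --- its proof applies Theorem B to conclude $\|G_\zeta\|_{\mathcal{A}^{p_0}_{\sigma_0}}\le C\|f\|_{\mathcal{A}^p_\sigma}$ and so needs the same strong-type bounds --- and the proposition is only ever invoked with $p_0,p_1\in(1,\infty)$ (in Lemma \ref{ExtrapolationLemma}), but it is worth noting that both arguments, as written, cover that range rather than all of $[1,\infty)$.
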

\begin{proof}
We follow the general outline of the argument in \cite{AP2015}*{Theorem 5.1}, with some modifications. Let $\mathcal{X}_\theta=[\mathcal{A}_{\sigma_0}^{p_0},\mathcal{A}_{\sigma_1}^{p_1}]_{\theta}.$ First, with $\sigma_0, \sigma_1,$ and $\sigma$ as defined above, straightforward computations using H\"{o}lder's inequality show that $\sigma \in B_p.$ The containment $\mathcal{X}_{\theta} \subseteq \mathcal{A}_{\sigma}^p$ follows from the fact that we have the isometric inclusions $\mathcal{A}^{p_j}_{\sigma_j} \subseteq L^{p_j}_{\sigma_j}$ for $j=0,1$, properties of complex interpolation, and the fact that
$$
    [L_{\sigma_0}^{p_0},L_{\sigma_1}^{p_1}]_{\theta}=L_{\sigma}^p
$$
with equal norms, which is a version of the Stein-Weiss interpolation result, see \cite{BerghLofstrom}*{Theorem 5.5.3}. In particular, for $f \in \mathcal{X}_{\theta},$ we have the norm inequality $\|f\|_{L^p_\sigma}=\|f\|_{\mathcal{A}^p_\sigma} \leq \|f\|_{\mathcal{X}_\theta}.$

We now want to show that the reverse containment holds continuously; that is, there exists $C>0$ such that $\|f\|_{\mathcal{X}_\theta} \leq C \|f\|_{\mathcal{A}^p_\sigma}$ for any $f \in \mathcal{A}^p_\sigma$. By Stein-Weiss interpolation, we know there exist functions $F_{\zeta}$ so that the mapping $F_{(\cdot)}: \overline{S} \rightarrow L^{p_0}_{\sigma_0}+ L^{p_1}_{\sigma_1} $ has the following properties:
\begin{enumerate}
\item $F_{(\cdot)}$ is bounded and continuous on $\overline{S};$ 
\item $F_{(\cdot)}$ is analytic on $S$;
\item The mappings $y \mapsto F_{j+ \mathrm{i}y}$ are continuous for $j=0,1$, and there holds
$$
    \|F_{\zeta}\|_{L^{p_0}_{\sigma_0}} \leq  \|f\|_{L^p_{\sigma}}
$$ 
for all $\zeta$ with $\text{Re}(\zeta)=0$ and 
$$
    \|F_{\zeta}\|_{L^{p_1}_{\sigma_1}} \leq  \|f\|_{L^p_{\sigma}}
$$ 
for all $\zeta$ with $\text{Re}(\zeta)=1$;
\item $F_\theta=f.$
\end{enumerate}

Define $G_{\zeta}=P(F_{\zeta}),$ for $\zeta \in \overline{S}.$ By Theorem B, we deduce the following properties of $G_{(\cdot)}: \overline{S} \rightarrow \mathcal{A}^{p_0}_{\sigma_0}+ \mathcal{A}^{p_1}_{\sigma_1} $: 
\begin{enumerate}
\item $G_{(\cdot)}$ is bounded and continuous on $\overline{S};$ 
\item $G_{(\cdot)}$ is analytic on $S$;
\item The mappings $y \mapsto G_{j+ \mathrm{i}y}$ are continuous for $j=0,1$, and there exists a positive constant $C$ so that 
$$
    \|G_{\zeta}\|_{\mathcal{A}^{p_0}_{\sigma_0}} \leq C \|f\|_{\mathcal{A}^p_{\sigma}}
$$ 
for all $\zeta$ with $\text{Re}(\zeta)=0$ and 
$$
    \|G_{\zeta}\|_{\mathcal{A}^{p_1}_{\sigma_1}} \leq C \|f\|_{\mathcal{A}^p_{\sigma}}
$$ 
for all $\zeta$ with $\text{Re}(\zeta)=1$;
\item $G_{\theta}=f$.
\end{enumerate}
Indeed, to see that (4) holds, note that by definition $G_{\theta}=Pf$ and since $f \in \mathcal{A}^p_{\sigma}$, we have $f \in L^1$ as $\sigma$ is a $B_p$ weight. Since $P$ reproduces holomorphic functions in $L^1$ (see \cite{Ru1980}*{Theorem 3.1.3}), (4) follows. This shows that $\|f\|_{\mathcal{X}_\theta} \leq C \|f\|_{\mathcal{A}^p_\sigma}$ and completes the proof. 
\end{proof}

The following lemma is the final ingredient needed for the compactness extrapolation. 
\begin{lemma}\label{ExtrapolationLemma}
Let $p, p_0 \in (1,\infty)$ and $r>1$. If $\sigma \in B_p\cap\text{RH}_r$ and $\sigma_0 \in B_{p_0}\cap \text{RH}_r$ satisfy $\sigma^{1-p'},\sigma_0^{1-p_0'} \in \text{RH}_r$, then there exists $p_1 \in (1,\infty)$, $\sigma_1 \in B_{p_1}$, and $\theta \in (0,1)$ such that
$$
    \mathcal{A}_\sigma^p=[\mathcal{A}_{\sigma_0}^{p_0},\mathcal{A}_{\sigma_1}^{p_1}]_{\theta}.
$$
\end{lemma}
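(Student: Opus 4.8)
The plan is to deduce the identity from the Stein--Weiss interpolation theorem, Proposition~\ref{SteinWeissInterpolation}. By that result it suffices to produce $\theta\in(0,1)$, $p_1\in(1,\infty)$, and a weight $\sigma_1\in B_{p_1}$ with
\begin{equation*}
    \frac1p=\frac{\theta}{p_0}+\frac{1-\theta}{p_1}
    \qquad\text{and}\qquad
    \sigma^{1/p}=\sigma_0^{\theta/p_0}\,\sigma_1^{(1-\theta)/p_1}.
\end{equation*}
Once $\theta$ is fixed these relations force the remaining data: the first gives $p_1$ through $\tfrac{1-\theta}{p_1}=\tfrac1p-\tfrac{\theta}{p_0}$, and, writing $q:=p_1/(1-\theta)$ so that $q^{-1}=p^{-1}-\theta p_0^{-1}$, the second gives $\sigma_1=\sigma^{q/p}\sigma_0^{-q\theta/p_0}$. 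Thus $\theta$ is the only free parameter, and I would take it to be a small positive number, to be fixed at the very end. Since $q\to p$, $p_1\to p$, and $\sigma_1\to\sigma$ as $\theta\to0^+$, the requirement $p_1\in(1,\infty)$ holds for all sufficiently small $\theta$, and the whole difficulty reduces to checking $\sigma_1\in B_{p_1}$.

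To verify $\sigma_1\in B_{p_1}$ I would estimate $\langle\sigma_1\rangle_{\widehat K}$ and $\langle\sigma_1^{1-p_1'}\rangle_{\widehat K}$ separately over dyadic tents $\widehat K$, $K\in\mathcal{D}$. Rewriting the negative power of $\sigma_0$ in $\sigma_1$ as a positive power of $\sigma_0^{1-p_0'}$, namely $\sigma_0^{-q\theta/p_0}=(\sigma_0^{1-p_0'})^{q\theta/p_0'}$, and applying H\"older's inequality on $\widehat K$ with a conjugate pair of exponents taken in $(1,r)$, I would split $\langle\sigma_1\rangle_{\widehat K}$ into a factor carrying a power of $\sigma$ and a factor carrying a power of $\sigma_0^{1-p_0'}$. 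The reverse H\"older hypotheses $\sigma\in\text{RH}_r$ and $\sigma_0^{1-p_0'}\in\text{RH}_r$, supplemented by Jensen's inequality for any exponent below $1$, then let me pull the averaging inside each factor, producing $\langle\sigma_1\rangle_{\widehat K}\lesssim\langle\sigma\rangle_{\widehat K}^{q/p}\langle\sigma_0^{1-p_0'}\rangle_{\widehat K}^{q\theta/p_0'}$. The symmetric computation for $\langle\sigma_1^{1-p_1'}\rangle_{\widehat K}$, this time invoking $\sigma^{1-p'}\in\text{RH}_r$ and $\sigma_0\in\text{RH}_r$ and using $(p_1-1)(p_1'-1)=1$, would give $\langle\sigma_1^{1-p_1'}\rangle_{\widehat K}^{p_1-1}\lesssim\langle\sigma^{1-p'}\rangle_{\widehat K}^{q/p'}\langle\sigma_0\rangle_{\widehat K}^{q\theta/p_0}$. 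Multiplying and regrouping, the exponents align exactly into
\begin{equation*}
    \langle\sigma_1\rangle_{\widehat K}\langle\sigma_1^{1-p_1'}\rangle_{\widehat K}^{p_1-1}
    \lesssim\bigl(\langle\sigma\rangle_{\widehat K}\langle\sigma^{1-p'}\rangle_{\widehat K}^{p-1}\bigr)^{q/p}
    \bigl(\langle\sigma_0\rangle_{\widehat K}\langle\sigma_0^{1-p_0'}\rangle_{\widehat K}^{p_0-1}\bigr)^{q\theta/p_0}
    \le[\sigma]_{B_p^d}^{q/p}\,[\sigma_0]_{B_{p_0}^d}^{q\theta/p_0},
\end{equation*}
which is finite and uniform in $K$; taking the supremum and recalling that the $B_{p_1}$ and $B_{p_1}^d$ characteristics are comparable yields $\sigma_1\in B_{p_1}$.

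I expect the main obstacle to be the coordination of the H\"older exponents with the smallness of $\theta$ so that every power of $\sigma$, $\sigma^{1-p'}$, $\sigma_0$, $\sigma_0^{1-p_0'}$ that appears lands in the range $(0,r]$ where $\text{RH}_r$ (or, below $1$, Jensen) applies. Fixing the conjugate pairs with their first exponents in $(1,r)$ keeps the powers of $\sigma$ and $\sigma^{1-p'}$ just under $r$ once $q/p$ is close to $1$, while the powers of $\sigma_0$ and $\sigma_0^{1-p_0'}$ all carry a factor of $\theta$ and so become arbitrarily small; hence after fixing the exponents one last choice of $\theta$ small enough makes all the constraints, and $p_1\in(1,\infty)$, hold simultaneously. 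With $\sigma_1\in B_{p_1}$ established, Proposition~\ref{SteinWeissInterpolation} completes the proof.
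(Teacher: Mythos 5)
Your proposal is correct, and its skeleton is the same as the paper's: reduce to Proposition \ref{SteinWeissInterpolation}, solve the interpolation relations for $p_1(\theta)$ and $\sigma_1(\theta)=\sigma^{q/p}\sigma_0^{-q\theta/p_0}$, get $p_1\in(1,\infty)$ by continuity at $\theta=0$, and verify $\sigma_1\in B_{p_1}$ over dyadic tents by H\"older plus reverse H\"older. The genuine difference lies in how the H\"older exponents are coordinated with $\theta$ in that last verification, and it has a real consequence. The paper takes the H\"older exponents themselves $\theta$-dependent ($1+\varepsilon$, $1+\delta$ with $\varepsilon=\theta p/p_0'$, $\delta=\theta p'/p_0$), tuned so that the two factors in each application carry equal powers $r(\theta)=s(\theta)$ and $t(\theta)=u(\theta)$, and then applies the reverse H\"older hypothesis to all four weights $\sigma$, $\sigma^{1-p'}$, $\sigma_0$, $\sigma_0^{1-p_0'}$. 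You instead fix the H\"older exponents $a,b\in(1,r)$ once and for all, so the powers landing on $\sigma$ and $\sigma^{1-p'}$ tend to $a$ and $b$ respectively as $\theta\to 0^+$ (hence stay below $r$, where $\text{RH}_r$ applies via the power-mean inequality), while the powers landing on $\sigma_0$ and $\sigma_0^{1-p_0'}$ are $O(\theta)$, hence below $1$ for small $\theta$, where Jensen's inequality alone suffices. Your exponent bookkeeping is exact: $aq/p\to a$, $bq/(p'(p_1-1))\to b$, the $\sigma_0$-exponents tend to $0$, and the regrouping into $B_p^d$ and $B_{p_0}^d$ characteristics uses $(p-1)/p=1/p'$ and $(p_0-1)/p_0=1/p_0'$ as you state. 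The byproduct is that your argument never uses the hypotheses $\sigma_0\in\text{RH}_r$ and $\sigma_0^{1-p_0'}\in\text{RH}_r$; only $\sigma,\sigma^{1-p'}\in\text{RH}_r$ and the $B_p$, $B_{p_0}$ conditions enter. So your proof establishes the lemma under formally weaker assumptions, which would in turn allow the reverse H\"older conditions on $\sigma_0$ to be dropped from the hypotheses of Theorem \ref{CompactnessExtrapolation}.
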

\begin{proof}
We closely follow the estimates of \cite{HL2020}*{Lemma 4.4}. Solving the equations from Proposition \ref{SteinWeissInterpolation} for $p_1$ and $\sigma_1$, we must show that there exists $\theta \in (0,1)$ such that 
$$
    p_1=p_1(\theta)=\frac{1-\theta}{\frac{1}{p}-\frac{\theta}{p_0}} \in (1,\infty)
$$
and
$$
    \sigma_1=\sigma_1(\theta)=\sigma^{\frac{p_1}{p(1-\theta)}}\sigma_0^{-\frac{p_1\theta}{p_0(1-\theta)}} \in B_{p_1}.
$$
Since $p_1(\theta)$ is continuous at $0$ and $p_1(0)=p \in (1,\infty)$, we have that $p_1(\theta) \in (1,\infty)$ for sufficiently small $\theta>0$. 

To show $\sigma_1(\theta) \in B_{p_1}$ for some sufficiently small $\theta>0$, fix $K \in \mathcal{D}$. For $\varepsilon,\delta>0$ to be chosen later, estimate
\begin{align*}
    \langle\sigma_1\rangle_{\widehat{K}}\left\langle \sigma_1^{1-p_1'}\right\rangle_{\widehat{K}}^{p_1-1}&=\left\langle\sigma^{\frac{p_1}{p(1-\theta)}}\sigma_0^{-\frac{p_1\theta}{p_0(1-\theta)}}\right\rangle_{\widehat{K}}\left\langle\sigma^{-\frac{p_1'}{p(1-\theta)}}\sigma_0^{\frac{p_1'\theta}{p_0(1-\theta)}}\right\rangle_{\widehat{K}}^{p_1-1}\\
    &= \left\langle \sigma^{\frac{p_1}{p(1-\theta)}}\left(\sigma_0^{-\frac{1}{p_0-1}}\right)^{\frac{p_1\theta}{p_0'(1-\theta)}}\right\rangle_{\widehat{K}}\left\langle\left(\sigma^{-\frac{1}{p-1}}\right)^{\frac{p_1'}{p'(1-\theta)}}\sigma_0^{\frac{p_1'\theta}{p_0(1-\theta)}}\right\rangle_{\widehat{K}}^{p_1-1}\\
    &\leq \left\langle \sigma^{r(\theta)}\right\rangle_{\widehat{K}}^{\frac{1}{1+\varepsilon}}\left\langle\left(\sigma_0^{-\frac{1}{p_0-1}}\right)^{s(\theta)}\right\rangle_{\widehat{K}}^{\frac{\varepsilon}{1+\varepsilon}}\left\langle\left(\sigma^{-\frac{1}{p-1}}\right)^{t(\theta)}\right\rangle_{\widehat{K}}^{\frac{p_1-1}{1+\delta}}\left\langle\sigma_0^{u(\theta)}\right\rangle_{\widehat{K}}^{\frac{\delta(p_1-1)}{1+\delta}},
\end{align*}
where we have used H\"older's inequality with $1+\varepsilon, (1+\varepsilon)'$ and $1+\delta,(1+\delta)'$ and
\begin{align*}
    r(\theta):=\frac{p_1(1+\varepsilon)}{p(1-\theta)}, \,\,\,\,\,
    s(\theta):=\frac{p_1\theta(1+\varepsilon)}{p_0'(1-\theta)\varepsilon}, \,\,\,\,\,
    t(\theta):=\frac{p_1'(1+\delta)}{p'(1-\theta)}, \,\,\,\,\,\text{and}\,\,\,\,\,
    u(\theta):=\frac{p_1'\theta(1+\delta)}{p_0(1-\theta)\delta}.
\end{align*}
Setting $\varepsilon = \frac{\theta p}{p_0'}$ and $\delta=\frac{\theta p'}{p_0}$, we have
$$
    r(\theta)=s(\theta)=\frac{p_1(\theta)(p_0'+\theta p)}{pp_0'(1-\theta)}\quad\,\,\text{and}\quad\,\, t(\theta)=u(\theta)=\frac{p_1(\theta)'(p_0+\theta p')}{p'p_0(1-\theta)}.
$$
Since $r(\theta)$ and $t(\theta)$ are continuous at $0$ and $r(0)=t(0)=1$, we may choose $\theta>0$ small enough such that $\max(r(\theta),t(\theta))\leq r$. We continue estimating the above quantity using our reverse-H\"older hypotheses and the $B_p, B_{p_0}$ conditions of $\sigma,\sigma_0$ by a constant times
\begin{align*}
    \langle\sigma\rangle_{\widehat{K}}^{\frac{r(\theta)}{1+\varepsilon}}
    &\left\langle \sigma_0^{-\frac{1}{p_0-1}}\right\rangle_{\widehat{K}}^{\frac{s(\theta)\varepsilon}{1+\varepsilon}}\left\langle\sigma^{-\frac{1}{p-1}}\right\rangle_{\widehat{K}}^{\frac{t(\theta)(p_1-1)}{1+\delta}}\langle\sigma_0\rangle_{\widehat{K}}^{\frac{u(\theta)\delta(p_1-1)}{1+\delta}}\\
    &=\left(\langle \sigma\rangle_{\widehat{K}}\left\langle\sigma^{-\frac{1}{p-1}}\right\rangle_{\widehat{K}}^{p-1}\right)^{\frac{p_1}{p(1-\theta)}}\left(\langle \sigma_0\rangle_{\widehat{K}}\left\langle \sigma_{0}^{-\frac{1}{p_0-1}}\right\rangle_{\widehat{K}}^{p_0-1}\right)^{\frac{\theta p_1}{p_0(1-\theta)}} \\
    &\leq [\sigma]_{B_p}^{\frac{p_1}{p(1-\theta)}}[\sigma_0]_{B_{p_0}}^{\frac{\theta p_1}{p_0(1 -\theta)}}.
\end{align*}
Taking a supremum over $K \in \mathcal{D}$ completes the proof.
\end{proof}

\begin{proof}[Proof of Theorem \ref{CompactnessExtrapolation}]
Let $p \in (1,\infty)$ and $\sigma \in B_p\cap\text{RH}_r$ with $\sigma^{1-p'}\in\text{RH}_r$. By Lemma \ref{ExtrapolationLemma}, there exist $p_1 \in (1,\infty)$, $\sigma_1 \in B_{p_1}$, and $\theta \in (0,1)$ such that 
$$
    \mathcal{A}_{\sigma}^p=[\mathcal{A}_{\sigma_0}^{p_0},A_{\sigma_1}^{p_1}]_{\theta}.
$$
By Proposition \ref{BpBoundednessExtrapolation}, $T$ acts boundedly on $\mathcal{A}_{\sigma_1}^{p_1}$. Thus, by Proposition \ref{CompactnessInterpolation}, we have that $T$ acts compactly on $\mathcal{A}_{\sigma}^p$ as required.
\end{proof}


\begin{proof}[Proof of Theorem \ref{LpCompactnessCharacterization}]
For the forward direction, since $T_u$ is bounded on $\mathcal{A}_{{\sigma}}^p$ for all ${\sigma} \in B_p$ (by Theorem B) and since $T_u$ is compact on $\mathcal{A}_{\sigma}^p$, by Theorem \ref{CompactnessExtrapolation}, we have that $T_u$ is compact on $\mathcal{A}_{{\sigma}}^q$ for all $q \in (1,\infty)$ and all ${\sigma} \in B_q \cap \text{RH}_r$. In particular, $T_u$ is compact on $\mathcal{A}^2$. Therefore, by Theorem A, we have that $\widetilde{T}_u(z)\rightarrow 0$ as $|z|\rightarrow 1^-$.

For the reverse direction, Theorem A gives that $T_u$ is compact on $\mathcal{A}^2$. Since $T_u$ is bounded on $\mathcal{A}_{{\sigma}}^2$ for all ${\sigma} \in B_2$ (by Theorem B), Theorem \ref{CompactnessExtrapolation} implies that $T_u$ is compact on $\mathcal{A}_{\sigma}^p$.
\end{proof}

\begin{proof}[Proof of Corollary \ref{LpCompactnessCharacterizationCorollary}]
This follows immediately since $\sigma\in B_p$ satisfying \eqref{DyadicWeightRegularity} implies that $\sigma \in \text{RH}_r$ for some $r>1$ by Theorem \ref{ReverseHolder}.
\end{proof}

\section{Weighted bounds beyond B\'ekoll\`e-Bonami weights}\label{BoundednessSection}
\subsection{Weighted $L^p$ bounds}\label{LpBoundsSection}
The following pointwise bound of the Bergman projection was obtained in \cite{RTW2017}:
$$
    |Pf(z)|\lesssim \sum_{K\in \mathcal{D}}\langle |f|\rangle_{\widehat{K}}\chi_{\widehat{K}}(z)
$$
for $f \in L^1$ and almost every $z \in \mathbb{B}_n$. From this bound, one immediately has the following proposition. 
\begin{prop}\label{SparseToeplitz}
If $u \in L^{\infty}$, then
$$
    |T_uf(z)|\lesssim \sum_{K \in \mathcal{D}}\langle |uf|\rangle_{\widehat{K}}\chi_{\widehat{K}}(z),
$$
for all $f \in L^1$ and almost every $z \in \mathbb{B}_n$.
\end{prop}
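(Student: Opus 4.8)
The plan is to combine the pointwise dyadic domination of the Bergman projection recalled from \cite{RTW2017} with the very definition $T_u f = P(uf)$. First I would observe that if $u \in L^{\infty}$ and $f \in L^1$, then $uf \in L^1$ as well, so the hypotheses of the stated pointwise bound for $P$ are met with $uf$ playing the role of $f$. Applying that bound directly gives, for almost every $z \in \mathbb{B}_n$,
$$
    |T_u f(z)| = |P(uf)(z)| \lesssim \sum_{K \in \mathcal{D}} \langle |uf| \rangle_{\widehat{K}}\, \chi_{\widehat{K}}(z).
$$

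Next I would estimate each dyadic average $\langle |uf| \rangle_{\widehat{K}}$ by localizing the essential supremum of $u$ to the tent $\widehat{K}$. Since $|u(w)| \le \|u\|_{L^{\infty}(\widehat{K})}$ for almost every $w \in \widehat{K}$, pulling this constant out of the integral yields
$$
    \langle |uf| \rangle_{\widehat{K}} = \frac{1}{|\widehat{K}|} \int_{\widehat{K}} |u|\,|f| \, dV \le \|u\|_{L^{\infty}(\widehat{K})}\, \frac{1}{|\widehat{K}|} \int_{\widehat{K}} |f| \, dV = \|u\|_{L^{\infty}(\widehat{K})}\, \langle |f| \rangle_{\widehat{K}}.
$$
Substituting this termwise into the preceding display produces exactly the claimed majorant.

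There is no genuine obstacle in this argument: the implied constant is inherited entirely from the constant in the \cite{RTW2017} domination, and the termwise step is a one-line Hölder-type bound with uniform constant $1$. The only point worth emphasizing is that the majorant carries the \emph{localized} quantity $\|u\|_{L^{\infty}(\widehat{K})}$ rather than the global norm $\|u\|_{L^{\infty}}$; this localization is precisely what will later make the sparse bound sensitive to the decay of $u$ near $\partial \mathbb{B}_n$ and thereby drive the $uB_p$ boundedness results of Theorem \ref{ToeplitzLpBoundedness}.
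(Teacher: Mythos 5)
Your proposal is correct and matches the paper's argument exactly: the paper likewise applies the pointwise dyadic domination of $P$ from \cite{RTW2017} to $uf \in L^1$ and then bounds $\langle |uf|\rangle_{\widehat{K}} \leq \|u\|_{L^{\infty}(\widehat{K})}\langle |f|\rangle_{\widehat{K}}$ termwise. Nothing is missing; the localization of $\|u\|_{L^{\infty}}$ to each tent is indeed the whole point, just as you note.
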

For $\ell \in \{1,2,\ldots,M\}$, $u \in L^{\infty}(\mathbb{B}_n)$, and $f \in L^1$, set
$$
    S_{u,\ell}f:=\sum_{K \in \mathcal{D}_{\ell}}\langle |uf|\rangle_{\widehat{K}}\chi_{\widehat{K}}
    \quad\text{and}\quad
    S_uf:=\sum_{\ell=1}^{M} S_{u,\ell}f.
$$
Proposition \ref{SparseToeplitz} lets us work with the dyadic operator $S_u$ in place of $T_u$ in our arguments.

\begin{proof}[Proof of Theorem \ref{ToeplitzLpBoundedness}]
Suppose that $p\ge 2$ and set $\sigma'=\sigma^{1-p'}$. We use the equivalence 
$$
    \|T_u\|_{L^p_{\sigma}\rightarrow L^p_{\sigma}} = \|T_u(\cdot\,\sigma')\|_{L^p_{\sigma'}\rightarrow L^p_{\sigma}}
$$
and proceed by duality. Let $f\in L^p_{\sigma'}$ and $g\in L^{p'}_{\sigma}$ be nonnegative functions. Apply 
Proposition \ref{SparseToeplitz} 
to obtain the estimate
$$
    \langle |T_u(f\sigma')|, g\sigma\rangle 
    \lesssim \langle S_{u}(f\sigma'),g\sigma\rangle 
    \leq\sum_{K \in \mathcal{D}}\|u\|_{L^{\infty}(\widehat{K})}\langle f\sigma'\rangle_{\widehat{K}}\langle g\sigma\rangle_{\widehat{K}}|\widehat{K}|.
$$

Using the $u B_p$ condition for $\sigma$, \eqref{SparseCondition}, and the containment $K\subseteq \widehat{K}$, we have
\begin{align*}
    \langle S_{u}(f\sigma'),g\sigma\rangle 
    &\leq\sum_{K \in \mathcal{D}}\|u\|_{L^{\infty}(\widehat{K})}\langle f\sigma'\rangle_{\widehat{K}}\langle g\sigma\rangle_{\widehat{K}}|\widehat{K}|\\
    &=\sum_{K \in \mathcal{D}}\|u\|_{L^{\infty}(\widehat{K})}\frac{\sigma(\widehat{K})\sigma'(\widehat{K})^{p-1}}{|\widehat{K}|^p}\frac{|\widehat{K}|^{p-1}}{\sigma(\widehat{K})\sigma'(\widehat{K})^{p-1}}\int_{\widehat{K}}f\sigma'\,dV \int_{\widehat{K}}g\sigma\,dV\\
    &\leq [\sigma]_{u B_p} \sum_{K\in\mathcal{D}}\left(\frac{1}{\sigma'(\widehat{K})}\int_{\widehat{K}}f\sigma' dV\right)\left(\frac{1}{\sigma(\widehat{K})}\int_{\widehat{K}}g\sigma\,dV\right)|\widehat{K}|^{p-1}\sigma'(\widehat{K})^{2-p}\\
    &\lesssim [\sigma]_{u B_p}\sum_{K\in\mathcal{D}}\left(\frac{1}{\sigma'(\widehat{K})}\int_{\widehat{K}}f\sigma' dV\right)\left(\frac{1}{\sigma(\widehat{K})}\int_{\widehat{K}}g\sigma\,dV\right)|K|^{p-1}\sigma'(K)^{2-p}.
\end{align*}
By H\"older's inequality, we have
$$
    |K|\leq \sigma(K)^{\frac{1}{p}}\sigma'(K)^{\frac{1}{p'}},
$$
and so
$$
    |K|^{p-1}\sigma'(K)^{2-p}\leq
    \sigma(K)^{\frac{p-1}{p}}\sigma'(K)^{\frac{p-1}{p'}}\sigma'(K)^{2-p}
    =\sigma(K)^{\frac{1}{p'}}\sigma'(K)^{\frac{1}{p}},
$$
since $\frac{p-1}{p'}+2-p=\frac{1}{p}$.
Using the above estimates, H\"older's inequality, the disjointness of the kubes $K \in \mathcal{D}_{\ell}$, and Lemma \ref{WeightedMaximal}, we bound $\langle S_{u}(f\sigma'),g\sigma\rangle$ by a constant times
\begin{align*}
    \nonumber
    &[\sigma]_{u B_p}\sum_{\ell=1}^{M}\sum_{K\in\mathcal{D}_{\ell}}\left(\frac{1}{\sigma'(\widehat{K})}\int_{\widehat{K}}f\sigma' dV\right)\left(\frac{1}{\sigma(\widehat{K})}\int_{\widehat{K}}g\sigma\,dV\right)\sigma(K)^{\frac{1}{p'}}\sigma'(K)^{\frac{1}{p}}\\
    \nonumber
     &\quad\leq [\sigma]_{u B_p}\sum_{\ell=1}^{M}\left(\sum_{K\in\mathcal{D}_{\ell}}\left(\frac{1}{\sigma'(\widehat{K})}\int_{\widehat{K}}f\sigma' dV\right)^p\sigma'(K)\right)^{\frac{1}{p}}\left(\sum_{K\in\mathcal{D}_{\ell}}\left(\frac{1}{\sigma(\widehat{K})}\int_{\widehat{K}}g\sigma\,dV\right)^{p'}\sigma(K)\right)^{\frac{1}{p'}}\\
    &\quad\lesssim [\sigma]_{u B_p}\|M_{\sigma'}f\|_{L^p_{\sigma'}}\|M_{\sigma}g\|_{L^{p'}_{\sigma}}\\
    &\quad\lesssim [\sigma]_{u B_p}\|f\|_{L^p_{\sigma'}}\|g\|_{L^{p'}_{\sigma}}.
\end{align*}
The case $1<p<2$ follows from duality since $\sigma \in u^{p-1}B_p$ if and only if $\sigma' \in uB_{p'}$, and $[\sigma']_{uB_{p'}}=[\sigma]_{u^{p-1}B_p}^{\frac{p'}{p}}$. Thus
$$
    \|T_u\|_{L^p_{\sigma}\rightarrow L^p_{\sigma}} = \|T_u^*\|_{L^{p'}_{\sigma'}\rightarrow L^{p'}_{\sigma'}}\lesssim [\sigma']_{u B_{p'}}=[\sigma]_{u^{p-1} B_p}^{\frac{p'}{p}}.
$$
Indeed, note that the adjoint $T_u^*$ (with respect to the $L^2$ pairing) admits the following pointwise bound:
\begin{align*}
|T_u^*f(z)| & = |\overline{u}(z) Pf(z)|\\
& \lesssim \sum_{K \in \mathcal{D}} \langle |f| \rangle_{\widehat{K}} \chi_{\widehat{K}}(z) |u(z)|\\
& \leq \sum_{K \in \mathcal{D}} \|u\|_{L^\infty(\widehat{K})} \langle |f| \rangle_{\widehat{K}} \chi_{\widehat{K}}(z),
\end{align*}
which shows that we can apply the same reasoning to the adjoint $T_u^*.$
\end{proof}

\subsection{Weighted weak-type $(1,1)$ bound}\label{WeaktypeSubsection}

Our methods of proving weighted weak-type $(1,1)$ bounds rely on Proposition \ref{SparseToeplitz} and modifications of arguments from \cite{DsLR2016}.
\begin{thm}\label{SparseGeneralWeakType}
Let $u \in L^{\infty}$ and $\sigma$ be a weight. If $\Phi$ is a Young function with complementary function $\Psi$ such that there exists $C>0$ with
\begin{align}\label{YoungCondition}
    c_{\Phi}:=1+\sum_{k=1}^{\infty}\frac{1}{\Psi^{-1}(\rho^{-C^k})}<\infty,
\end{align}
then 
$$
    \|T_{u}f\|_{L^{1,\infty}_{\sigma}}\lesssim c_{\Phi} \|f\|_{L^1_{M_{u,\Phi}\sigma}}
$$
for all $f \in L^1_{M_{u,\Phi}\sigma}$. 
\end{thm}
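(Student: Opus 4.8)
The plan is to reduce the assertion to a weak-type bound for a single dyadic sparse operator and then adapt the argument of \cite{DsLR2016}. By Proposition \ref{SparseToeplitz} we have $|T_uf|\lesssim S_uf=\sum_{\ell=1}^{M}S_{u,\ell}f$ pointwise, so by the quasi-triangle inequality for $L^{1,\infty}_\sigma$ and the finiteness of $M$, it suffices to prove
$$\lambda\,\sigma\bigl(\{z:S_{u,\ell}f(z)>\lambda\}\bigr)\lesssim c_{\Phi}\,\|f\|_{L^1_{M_{u,\Phi}\sigma}}$$
for each fixed $\ell$, uniformly in $\lambda>0$. Since $S_{u,\ell}$ is positively homogeneous and depends only on $|f|$, I may normalize $\lambda=1$ and take $f\ge 0$. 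I then fix the structure $\mathcal{D}_\ell$, drop $\ell$ from the notation, and write $a_K:=\|u\|_{L^{\infty}(\widehat K)}\langle f\rangle_{\widehat K}$, so that $S_{u,\ell}f=\sum_{K\in\mathcal{D}_\ell}a_K\chi_{\widehat K}$ and $\sup_{K\in\mathcal{D}_\ell}a_K\chi_{\widehat K}\le M_uf$.

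First I would run a Calder\'on--Zygmund stopping-time decomposition adapted to the quantity $a_K$. Let $\{F\}=\{\widehat{K_j}\}$ be the maximal tents with $a_{K_j}>1$; these are pairwise disjoint with union $\{M_uf>1\}$, and iterating the selection inside each such tent (stopping when the relevant average next increases by a fixed factor) assigns to every $K$ with $a_K\neq 0$ a principal ancestor $F$ and a stopping-generation index $k\ge 0$. Splitting $\sum_K a_K\chi_{\widehat K}$ according to $k$ writes $S_{u,\ell}f$ as a principal (generation-zero) part, which is dominated pointwise by a constant multiple of $M_uf$, plus a tail collecting the deeper generations.

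The principal part is controlled directly: by Lemma \ref{FeffermanStein} together with the pointwise bound \eqref{MaximalPointwise} applied to $\sigma$,
$$\|M_uf\|_{L^{1,\infty}_\sigma}\lesssim \|f\|_{L^1_{M_u\sigma}}\lesssim \|f\|_{L^1_{M_{u,\Phi}\sigma}},$$
so this piece satisfies even better than the claimed weak-type bound. The heart of the matter is to show that the tail is bounded on $L^1_\sigma$ (whence in $L^{1,\infty}_\sigma$). Grouping the tail by principal ancestor $F$ and generation $k$, the contributing sub-tents at generation $k$ below $F$ are disjoint with union $E_k\subseteq F$; on them the stopping rule gives $\langle f\rangle_{\widehat K}\lesssim \langle f\rangle_F$ and $\|u\|_{L^\infty(\widehat K)}\le\|u\|_{L^\infty(F)}$, while the sparseness condition \eqref{SparseCondition} and the doubling Lemma \ref{DyadicDoubling} force a geometric measure ratio $|F|/|E_k|\approx \rho^{-C^k}$ for a suitable $C>0$. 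Applying Lemma \ref{OrliczHolderApplied} with $A=F$ and $A'=E_k$ converts the generation-$k$ contribution over $F$ into a bound of the form
$$\frac{1}{\Psi^{-1}(\rho^{-C^k})}\,\|u\|_{L^\infty(F)}\,\|\sigma\|_{\Phi,F}\int_{F}f\,dV\le \frac{1}{\Psi^{-1}(\rho^{-C^k})}\int_{F}f\,M_{u,\Phi}\sigma\,dV,$$
using that $\|u\|_{L^\infty(F)}\|\sigma\|_{\Phi,F}\le M_{u,\Phi}\sigma$ on $F$. Summing in $k\ge 1$ produces the factor $1+\sum_{k\ge1}\Psi^{-1}(\rho^{-C^k})^{-1}=c_{\Phi}$, and summing over the disjoint principal tents $F$ gives $c_\Phi\|f\|_{L^1_{M_{u,\Phi}\sigma}}$.

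I expect the tail estimate to be the main obstacle, on two counts. The combinatorial core is to organize the stopping decomposition so that the measure ratio fed into Lemma \ref{OrliczHolderApplied} at the $k$-th layer is genuinely of the (doubly exponential) order $\rho^{-C^k}$ dictated by the definition of $c_\Phi$, so that the layered Orlicz bumping matches the convergent series exactly; this is precisely the delicate part of \cite{DsLR2016} that must be transported to the tent--kube tree. A secondary difficulty, absent from the classical Euclidean setting, is that the factors $\|u\|_{L^\infty(\widehat K)}$ are not monotone along the tree, so the stopping construction must be governed by $a_K$ (equivalently by $M_uf$) rather than by the averages $\langle f\rangle_{\widehat K}$ alone, and the correct tent must be kept attached to the Orlicz norm $\|\sigma\|_{\Phi,\cdot}$ throughout the summation in order to land on $M_{u,\Phi}\sigma$.
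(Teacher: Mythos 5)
Your reduction to the dyadic operators $S_{u,\ell}$ via Proposition \ref{SparseToeplitz}, and your identification of Lemma \ref{FeffermanStein}, \eqref{MaximalPointwise}, and Lemma \ref{OrliczHolderApplied} as the analytic ingredients, all match the paper. But the decomposition you build on top of them does not deliver the two claims you need from it, and both claims are in fact false. First, the generation-zero part of a principal-ancestor/stopping-time decomposition is \emph{not} pointwise dominated by a constant times $M_uf$: along a chain of nested tents on which $\langle f\rangle_{\widehat K}$ stays comparable to $\langle f\rangle_F$ (so your stopping rule never triggers), every kube of the chain is assigned generation zero, and the sum $\sum_K a_K\chi_{\widehat K}$ at a point deep in the chain grows linearly in the chain's length, while $M_uf$ stays of size $a_F$. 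Second, no stopping-time construction produces the ratio $|F|/|E_k|\approx\rho^{-C^k}$ at stopping generation $k$: the sparseness \eqref{SparseCondition} loses only a fixed factor $\rho$ per dyadic generation, and an ``average increases by a fixed factor'' rule loses only a fixed factor per stopping generation, so after $k$ generations the measure ratio is geometric (of order $\rho^{-k}$ or $2^{k}$), never doubly exponential. You flag this as ``precisely the delicate part of \cite{DsLR2016} that must be transported,'' but transporting it \emph{is} the proof, and your scheme contains no mechanism for it; with only geometric ratios your argument would need $\sum_k 1/\Psi^{-1}(\rho^{-k})<\infty$, which is a strictly stronger hypothesis than \eqref{YoungCondition}.

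The paper's mechanism is structurally different from stopping on $f$-averages. After the rescaling/annular reduction $\sigma(\{S_{u,\ell}f>\lambda\})=\sum_{k}\sigma(\{C<S_{u,\ell}(Cf/(2^k\lambda))\le 2C\})$ and the removal of $\{M_uf>C^{-k_0}\}$ via Lemma \ref{FeffermanStein}, the kubes are layered by the size of $a_K$, namely $\mathcal{S}^k_{u,\ell}=\{K:\,C^{-k-1}<\|u\|_{L^{\infty}(\widehat K)}\langle f\rangle_{\widehat K}\le C^{-k}\}$. Within a layer every kube carries the same small coefficient $\sim C^{-k}$, so each maximality-generation contributes at most $\sim C^{-k}\sigma(\mathcal{E})$ to $\int_{\mathcal{E}}S^k_{u,\ell}f\,\sigma\,dV$; this is what permits descending $\lceil C^{k/2}\rceil$ generations, paying an absorbable error of order $C^{-k/2}\sigma(\mathcal{E})$, before applying Lemma \ref{OrliczHolderApplied} to the deep remainder $\widetilde K$, whose measure has by then decayed to $\rho^{C^{k/2}}|\widehat K|$ --- this is where the doubly exponential gain in \eqref{YoungCondition} is earned. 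In your scheme the kubes at shallow stopping generations under a principal tent $F$ have $a_K$ comparable to or larger than $a_F$, so their cumulative contribution is of the order of the whole level set and cannot be absorbed. Note also that the key estimate \eqref{GeneralWeakTypeClaim} is not a clean $L^1_\sigma$ bound of a tail, as you propose: it carries the term $(C^{-k}+C^{-k/2})\sigma(\mathcal{E})$, whose absorption into the left-hand side is legitimate only because one works on the two-sided level set $\{C<S_{u,\ell}f\le 2C\}$, which guarantees $\sigma(\mathcal{E})<\infty$; your normalization to $\lambda=1$ with the one-sided set $\{S_{u,\ell}f>1\}$ does not set this up.
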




\begin{proof}
Let $f \in L^1_{M_{u,\Phi}\sigma}$ be nonnegative. By Proposition \ref{SparseToeplitz}, it suffices to prove the estimate with $S_{u,\ell}$ for $\ell=1,\ldots,M$ in place of $T_u$. Fix $C \in (1,\frac{1}{\rho})$ and let $k_0 \in \mathbb{N}$ be such that $\frac{1}{C}\sum_{k=k_0}^{\infty}(C^{-\frac{k}{2}}+C^{-k})<1$. We can further reduce to proving
$$
    \sigma(\{C<S_{u,\ell}f\leq 2C\}) \lesssim c_{\Phi}\|f\|_{L^1_{M_{u,\Phi}\sigma}},
$$
since then 
$$
    \sigma(\{S_{u,\ell}f>\lambda\})=\sum_{k=0}^{\infty}\sigma\left(\left\{C<S_{u,\ell}\left(\frac{Cf}{2^{k}\lambda}\right)\leq 2C\right\}\right) \lesssim \frac{c_{\Phi}}{\lambda}\|f\|_{L^1_{M_{u,\Phi}\sigma}}
$$
for any $\lambda>0$. Notice that Lemma \ref{FeffermanStein} and \eqref{MaximalPointwise} imply
$$
    \sigma(\{M(uf)>C^{-k_0}\}) \leq \sigma\left(\left\{M_{u}f>C^{-k_0}\right\}\right)\lesssim \|f\|_{L_{M_{u}\sigma}^1}\leq c_{\Phi}\|f\|_{L_{M_{u,\Phi}\sigma}^1},
$$
and so it suffices to estimate $\sigma(\mathcal{E})$, where 
$$
    \mathcal{E}:=\{C<S_{u,\ell}f\leq 2C\}\setminus \{M(uf)>C^{-k_0}\}.
$$

For $k\ge k_0$, define $\mathcal{S}_{u,\ell}^{k}:=\{K \in \mathcal{D}_{\ell}: C^{-k-1} < \langle |u|f\rangle_{\widehat{K}} \leq C^{-k}\}$ and set
$$
    S_{u,\ell}^{k}f(z):= \sum_{K \in \mathcal{S}_{u,\ell}^{k}}\langle |u|f\rangle_{\widehat{K}}\chi_{\widehat{K}}(z).
$$
Note that if $z \in \mathcal{E}$, then 
$$
    S_{u,\ell}f(z)=\sum_{k=k_0}^{\infty} S_{u,\ell}^{k}f(z).
$$
Therefore, it suffices to prove the following claim: there exists $C'>0$ such that 
\begin{equation}\label{GeneralWeakTypeClaim}
    \int_{\mathcal{E}}S_{u,\ell}^{k}f(z)\sigma(z)\,dV(z) \leq (C^{-k}+C^{-\frac{k}{2}})\sigma(\mathcal{E})+ \frac{C'}{\Psi^{-1}(\rho^{-C^k})}\|f\|_{L^1_{M_{u,\Phi}\sigma}}
\end{equation}
for each $k \ge k_0$. Indeed, assuming \eqref{GeneralWeakTypeClaim}, we have 
\begin{align*}
    \sigma(\mathcal{E})&\leq \frac{1}{C}\sum_{k=k_0}^{\infty}\int_{\mathcal{E}}S_{u,\ell}^{k}f(z)\sigma(z)\,dV(z)\\
    &\leq \frac{1}{C}\sum_{k=k_0}^{\infty}\left(C^{-k}+C^{-\frac{k}{2}}\right)\sigma(\mathcal{E})+ C'c_{\Phi}\|f\|_{L_{M_{u,\Phi}\sigma}^1}
\end{align*}
which implies $\sigma(\mathcal{E}) \lesssim c_{\Phi}\|f\|_{L_{M_{u,\Phi}\sigma}^1}$ and completes the proof.

It remains to prove \eqref{GeneralWeakTypeClaim}. Let $\mathcal{S}_{u,\ell}^{k,0}$ be the collection of $K\in\mathcal{S}_{u,\ell}^{k}$ such that $\widehat{K}$ is maximal with respect to inclusion in $\{\widehat{K}:K \in \mathcal{S}_{u,\ell}^k\}$, and inductively let $\mathcal{S}_{u,\ell}^{k,v}$ be the collection of $K \in \mathcal{S}_{u,\ell}^{k}$ such that $\widehat{K}$ is maximal in $\{\widehat{K}: K \in \mathcal{S}_{u,\ell}^{k}\setminus\bigcup_{j=0}^{v-1}\mathcal{S}_{u,\ell}^{k,j}\}$. Then $\mathcal{S}_{u,\ell}^{k}=\bigcup_{v=0}^{\infty}\mathcal{S}_{u,\ell}^{k,v}$ 
and therefore we should estimate
$$
    \int_{\mathcal{E}}S_{u,\ell}^{k}f(z)\sigma(z)\,dV(z) = \sum_{v=0}^{\infty}\sum_{K \in \mathcal{S}_{u,\ell}^{k,v}}\langle |u|f \rangle_{\widehat{K}}\sigma(\mathcal{E}\cap \widehat{K}).
$$

For $K \in \mathcal{S}_{u,\ell}^{k,v}$, set 
$$
    E_K:= \widehat{K}\setminus \bigcup_{K' \in \mathcal{S}_{u,\ell}^{k,v+1}}\widehat{K'}.
$$
By construction, $E_{K}\cap E_{K'}=\emptyset$ for distinct $K,K' \in \mathcal{S}_{u,\ell}^k$. Further, using the fact that $C^{-k-1}< \langle |u|f\rangle_{\widehat{K}} \leq C^{-k}$ for $K \in \mathcal{S}_{u,\ell}^{k}$ and \eqref{SparseCondition}, we have for each $K \in \mathcal{S}_{u,\ell}^{k}$ that
\begin{align*}
    \int_{\widehat{K}} |u(z)|f(z)\,dV(z) &= \int_{E_K}|u(z)|f(z)\,dV(z)+\sum_{K' \in \text{ch}_{\mathcal{S}_{u,\ell}^{k}}(K)}\int_{\widehat{K'}}|u(z)|f(z)\,dV(z)\\
    &\leq \int_{E_K}|u(z)|f(z)\,dV(z) + C^{-k}\sum_{K' \in \text{ch}_{\mathcal{S}_{u,\ell}^{k}}(K)}|\widehat{K'}|\\
    &\leq \int_{E_K}|u(z)|f(z)\,dV(z) + C^{-k}\rho|\widehat{K}|\\
    &\leq \int_{E_K}|u(z)|f(z)\,dV(z) + C\rho\int_{\widehat{K}}|u(z)|f(z)\,dV(z),
\end{align*}
where $\text{ch}_{\mathcal{S}_{u,\ell}^{k}}(K):=\{K' \in \mathcal{S}_{u,\ell}^{k}: \widehat{K'} \subsetneq \widehat{K} \,\, \text{and} \,\, \widehat{K'} \,\,\text{is maximal}\}$. Since $C\rho<1$, this implies that $\int_{\widehat{K}} |u(z)|f(z)\,dV(z) \lesssim \int_{E_K}|u(z)|f(z)\,dV(z)$. 

For fixed $v\ge 0$ and $K \in \mathcal{S}_{u,\ell}^{k,v}$, set 
$$
    \widetilde{K}:= \bigcup_{\substack{K' \in \mathcal{S}_{u,\ell}^{k,v+\lceil C^{k/2}\rceil}\\ \widehat{K'} \subseteq \widehat{K}}}\widehat{K'},
$$
and note that, by repeatedly applying \eqref{SparseCondition}, we have
$$
    |\widetilde{K}|\leq \rho^{C^{\frac{k}{2}}}|\widehat{K}|.
$$
Decompose 
$$
    \widehat{K}=\widetilde{K} \cup \bigcup_{l=0}^{\lfloor C^{k/2}\rfloor}\bigcup_{\substack{K' \in \mathcal{S}_{u,\ell}^{k,v+l}\\ \widehat{K'} \subseteq \widehat{K}}}E_{K'}.
$$

Using the above facts and Lemma \ref{OrliczHolderApplied}, we see that 
\begin{align*}
    \langle |u|f\rangle_{\widehat{K}}\sigma(\mathcal{E}\cap \widetilde{K})&=\int_{\widehat{K}}|u(z)|f(z)\,dV(z) \langle \sigma\chi_{\mathcal{E}\cap \widetilde{K}}\rangle_{\widehat{K}}\\
    &\lesssim \int_{E_K}f(z)\|u\|_{L^{\infty}(\widehat{K})}\frac{\|\sigma\chi_{\mathcal{E}\cap\widetilde{K}}\|_{\Phi,\widehat{K}}}{\Psi^{-1}\left(|\widehat{K}|/|\widetilde{K}|\right)}\,dV(z)\\
    &\lesssim \frac{1}{\Psi^{-1}\left(\rho^{-C^{\frac{k}{2}}}\right)}\int_{E_K}f(z)M_{u,\Phi}\sigma(z)\,dV(z).
\end{align*}
Using the disjointness of the $E_K$ and the above estimate, we sum 
\begin{align*}
    \sum_{v=0}^{\infty}\sum_{K\in\mathcal{S}_{u,\ell}^{k,v}}\langle |u|f\rangle_{\widehat{K}}\sigma(\mathcal{E}\cap \widetilde{K})&\lesssim \frac{1}{\Psi^{-1}\left(\rho^{-C^{\frac{k}{2}}}\right)}\sum_{v=0}^{\infty}\sum_{K\in\mathcal{S}_{u,\ell}^{k,v}}\int_{E_K}f(z)M_{u,\Phi}\sigma(z)\,dV(z)\\
    &\leq \frac{1}{\Psi^{-1}\left(\rho^{-C^{\frac{k}{2}}}\right)}\|f\|_{L_{M_{u,\Phi}\sigma}^1}.
\end{align*}

On the other hand, since $\langle |u|f\rangle_{\widehat{K}}\leq C^{-k}$ for $K \in \mathcal{S}_{u,\ell}^{k}$ and the sets $E_K$ are pairwise disjoint, we have
\begin{align*}
    \sum_{v=0}^{\infty}\sum_{K \in \mathcal{S}_{u,\ell}^{k,v}}\sum_{l=0}^{\lfloor C^{k/2}\rfloor}\sum_{\substack{K' \in \mathcal{S}_{u,\ell}^{k,v+l}\\\widehat{K'}\subseteq \widehat{K}}} \langle |u|f\rangle_{\widehat{K}}\sigma(\mathcal{E}\cap E_{K'})&\leq C^{-k}\lceil C^{k/2}\rceil\sum_{v=0}^{\infty}\sum_{K \in \mathcal{S}_{u,\ell}^{k,v}}\sigma(\mathcal{E}\cap E_{K})\\ 
    &\leq (C^{-\frac{k}{2}}+C^{-k})\sigma(\mathcal{E}).
\end{align*}
This establishes the claim \eqref{GeneralWeakTypeClaim}.
\end{proof}

\begin{cor}\label{SparseMrWeakType}
Let $u\in L^{\infty}$ and $\sigma$ be a weight. If $r \in (1,\infty)$, then 
$$
    \|T_{u}f\|_{L^{1,\infty}_{\sigma}}\lesssim \left(1+\log r'\right)\|f\|_{L^1_{M_{u,r}\sigma}}
$$
for all $f \in L^1_{M_{u,r}\sigma}$, where $\displaystyle M_{u,r}f:=\sup_{K \in \mathcal{D}}\|u\|_{L^{\infty}(\widehat{K})}\langle |f|^r\rangle_{\widehat{K}}^{1/r}\chi_{\widehat{K}}$.
\end{cor}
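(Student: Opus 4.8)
The plan is to deduce this directly from Theorem~\ref{SparseGeneralWeakType} by taking the power Young function $\Phi(t)=t^r$. The first step is to identify the maximal operator this produces. Computing the Luxemburg norm, $\|f\|_{\Phi,\widehat{K}}=\inf\{\lambda>0:\lambda^{-r}\langle|f|^r\rangle_{\widehat{K}}\le 1\}=\langle|f|^r\rangle_{\widehat{K}}^{1/r}$, so that $M_{u,\Phi}\sigma=M_{u,r}\sigma$ exactly. Hence the right-hand side of Theorem~\ref{SparseGeneralWeakType} already has the desired form, and it remains only to verify the hypothesis \eqref{YoungCondition} and to extract the factor $1+\log r'$ from the constant $c_\Phi$.

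Next I would compute the complementary function. Since $\Phi(t)=t^r$ with $r>1$ satisfies $\lim_{t\to\infty}\Phi(t)/t=\infty$, its complementary function has the form $\Psi(t)=c_r t^{r'}$ for an explicit constant $c_r>0$, so $\Psi^{-1}(s)=(s/c_r)^{1/r'}$. Fixing the same $C\in(1,1/\rho)$ used in the proof of Theorem~\ref{SparseGeneralWeakType}, this reduces the condition \eqref{YoungCondition} to
\[
    c_\Phi=1+\sum_{k=1}^{\infty}\frac{1}{\Psi^{-1}(\rho^{-C^k})}=1+c_r^{1/r'}\sum_{k=1}^{\infty}\rho^{C^k/r'}.
\]
A short computation shows $c_r^{1/r'}=\big(\tfrac{r-1}{r}\big)^{1/r'}r^{-1/r}\le 1$ for every $r\in(1,\infty)$, since both factors have base in $(0,1)$ raised to a positive power; so this prefactor is harmless and the whole matter comes down to the sum.

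The main step, and the one I expect to be the crux, is to estimate $\sum_{k\ge 1}\rho^{C^k/r'}$ and to see that it is comparable to $\log r'$ as $r\to 1^+$. The idea is to split at $N:=\lceil\log_C r'\rceil$. For $1\le k\le N$ I would use only the trivial bound $\rho^{C^k/r'}\le 1$, which contributes at most $N\lesssim 1+\log r'$ terms (the logarithm coming precisely from the range where $C^k\le r'$). For $k>N$ the doubly-exponential growth of $C^k$ takes over: since $C^N\ge r'$ we have $C^k/r'\ge C^{k-N}$, hence $\rho^{C^k/r'}\le\rho^{C^{k-N}}$, and the tail is dominated by $\sum_{j\ge 1}\rho^{C^j}$, a convergent constant depending only on $\rho$ and $C$. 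The delicate point is verifying that this tail bound is uniform in $r$, so that the full sum is $\lesssim 1+\log r'$ with no hidden $r$-dependence.

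Finally, with \eqref{YoungCondition} verified and $c_\Phi\lesssim 1+\log r'$ established, applying Theorem~\ref{SparseGeneralWeakType} and the identity $M_{u,\Phi}\sigma=M_{u,r}\sigma$ yields
\[
    \|T_u f\|_{L^{1,\infty}_\sigma}\lesssim c_\Phi\|f\|_{L^1_{M_{u,\Phi}\sigma}}\lesssim(1+\log r')\|f\|_{L^1_{M_{u,r}\sigma}},
\]
which is the claimed estimate.
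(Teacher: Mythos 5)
Your proof is correct, and it follows the paper's skeleton---apply Theorem~\ref{SparseGeneralWeakType} with $\Phi(t)=t^r$ and show $c_\Phi\lesssim 1+\log r'$---but the two technical steps are executed differently, and your execution of the crucial one is genuinely distinct. First, you compute the complementary function exactly, $\Psi(t)=(r-1)r^{-r'}t^{r'}$, and observe that the resulting prefactor $c_r^{1/r'}=\bigl(\tfrac{r-1}{r}\bigr)^{1/r'}r^{-1/r}\le 1$ can be dropped; the paper avoids the exact formula and instead proves the two-sided bound $\Psi^{-1}(t)\approx t^{1/r'}$ by verifying $\Psi(ct^{1/r'})\ge t$ for $c>2$ and $\Psi(t^{1/r'})\le t$. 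These are equivalent in substance. Second---and this is the heart of the corollary, since the entire content is the $1+\log r'$ growth---you bound $\sum_{k\ge1}\rho^{C^k/r'}$ by splitting at $N=\lceil\log_C r'\rceil$: the first $N$ terms are each at most $1$ (this is where the logarithm appears, counting the indices $k$ with $C^k\le r'$), while for $k>N$ one has $C^k/r'\ge C^{k-N}$, so the tail is dominated by the convergent series $\sum_{j\ge1}\rho^{C^j}$, which depends only on the fixed structural constants $\rho$ and $C$ and is therefore uniform in $r$. The paper instead compares the sum with the integral $\int_1^\infty\rho^{C^x/r'}\,dx$, changes variables to reduce to the exponential integral $\int_x^\infty e^{-t}t^{-1}\,dt$, and invokes the Abramowitz--Stegun estimate $e^x\int_x^\infty e^{-t}t^{-1}\,dt\le\log(1+1/x)$. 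Your argument is more elementary and self-contained (no special-function bound, no external reference) and makes the source of the logarithm transparent; the paper's argument is shorter on the page but leans on a classical asymptotic. Both yield the same conclusion with constants independent of $r$, which is the point you correctly flagged as the delicate issue.
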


\begin{proof}
Take $\Phi(t)=t^r$ in Theorem \ref{SparseGeneralWeakType}. We will show that the constant $c_{\Phi}$ in \eqref{YoungCondition} is proportional to $1+\log r'$. We first sketch an argument showing $\Psi^{-1}(t) \approx t^{\frac{1}{r'}}.$ Let $c>2.$ Since $\Psi$ is increasing, to prove $\Psi^{-1}(t) \leq c t^{\frac{1}{r'}}$, it suffices to show $\Psi(c t^{\frac{1}{r'}}) \geq t$. A calculus computation establishes that 
$$
    \Psi\left(ct^{\frac{1}{r'}}\right)= t\left(c\left(\frac{c}{r}\right)^{\frac{1}{r-1}}-\left(\frac{c}{r}\right)^{\frac{r}{r-1}}\right).
$$
Straightforward calculations verify that since $c>2$, there holds, for $r  \in (1, \infty)$:
$$ 
    \left(c\left(\frac{c}{r}\right)^{\frac{1}{r-1}}-\left(\frac{c}{r}\right)^{\frac{r}{r-1}}\right) \geq 1,
$$
as required. Replacing $c$ with $1$, one can show that similarly 
$$
    \Psi\left(t^{\frac{1}{r'}}\right)= t\left(\left(\frac{1}{r}\right)^{\frac{1}{r-1}}-\left(\frac{1}{r}\right)^{\frac{r}{r-1}}\right)
$$
and that for $r \in (1, \infty),$
$$ \left(\left(\frac{1}{r}\right)^{\frac{1}{r-1}}-\left(\frac{1}{r}\right)^{\frac{r}{r-1}}\right) \leq 1,$$
which shows $\Psi(t^{\frac{1}{r'}}) \leq t$ and hence $\Psi^{-1}(t) \geq t^{\frac{1}{r'}}.$

Using the fact that $\Psi^{-1}(t) \approx t^{\frac{1}{r'}}$ and the notation of Theorem \ref{SparseGeneralWeakType}, we estimate:
$$ 
    c_{\Phi}=1+\sum_{k=1}^{\infty}\frac{1}{\Psi^{-1}(\rho^{-C^k})} \approx 1+\sum_{k=1}^{\infty} \rho^{\frac{C^k}{r'}}.
$$
We will use the facts that $\rho<1$ and $C>1$. Note that $\rho^{\frac{C}{r'}}\leq 1,$ so we can ignore the first term in the summation. Interpreting the summation as a Riemann sum and making a couple of simple substitutions, we have
\begin{align*} \sum_{k=2}^{\infty} \rho^{\frac{C^k}{r'}} & \leq \int_{1}^{\infty} \rho^{\frac{C^x}{r'}} \mathop{d x}\\
& = \int_{\frac{C}{r'}}^{\infty} \frac{\rho^u}{(\log{C}) u} \mathop{d u}\\
& = \int_{\frac{C}{r'}}^{\infty} \frac{e^{(\log{\rho})u}}{(\log{C}) u} \mathop{d u}\\
& = \frac{1}{\log{C}} \int_{\frac{-C \log{\rho}}{r'}}^{\infty} \frac{e^{-v}}{v} \mathop{d v}.
\end{align*}
Then, using the estimate 
$$
e^x \int_{x}^{\infty} \frac{e^{-t}}{t} \mathop{dt} \leq \log \left(1+ \frac{1}{x} \right)
$$
from \cite{AS1964} and noting that $r' \in (1, \infty)$, this last integral is dominated by
$$
    \frac{e^{C \log{\rho}/r'}}{\log{C}} \log \left(1- \frac{r'}{C \log{\rho}}\right)  \lesssim \log \left(1- \frac{r'}{C \log{\rho}}\right) \lesssim 1+\log{r'}
$$
which establishes the result.
\end{proof}

\begin{proof}[Proof of Theorem \ref{ToeplitzL1Boundedness}]
Using the fact that $\sigma\in uB_1 \cap \text{RH}_r$, we have
$$
    M_{u,r}\sigma \lesssim [\sigma]_{\text{RH}_r} M_{u}\sigma \lesssim [\sigma]_{uB_1}[\sigma]_{\text{RH}_r} \sigma.
$$ 
Thus, using Corollary \ref{SparseMrWeakType}, we obtain
$$
    \|T_uf\|_{L_{\sigma}^{1,\infty}}\lesssim (1+\log r') \|f\|_{L_{M_{u,r}\sigma}^1}\leq [\sigma]_{uB_1}[\sigma]_{\text{RH}_r}(1+\log r')\|f\|_{L_{\sigma}^1}
$$
as required.
\end{proof}

\begin{proof}[Proof of Corollary \ref{ToeplitzL1BoundednessCorollary}]
Take $r=1+\frac{1}{2\alpha[\sigma]_{B_\infty}}$ and compute $r' = 1+ 2\alpha[\sigma]_{B_{\infty}}$. Applying Theorem \ref{ToeplitzL1Boundedness} and Theorem \ref{ReverseHolder}, we conclude
\begin{align*}
\|T_u\|_{L^{1}_{\sigma}\rightarrow L^{1,\infty}_{\sigma}} &\lesssim [\sigma]_{uB_1}[\sigma]_{\text{RH}_r}(1+\log r')\\
&\lesssim [\sigma]_{uB_1}c_{\sigma}^{ \frac{1}{2\alpha[\sigma]_{B_\infty}+1}}\log(e+[\sigma]_{B_\infty}).
\end{align*}
\end{proof}


\begin{bibdiv}
\begin{biblist}

\bib{AS1964}{book}{
title={Handbook of mathematical functions with formulas, graphs, and mathematical tables},
author={M. Abramowitz},
author={I. Stegun},
series={National Bureau of Standards Applied Mathematics Series},
publisher={ U.S. Government Printing Office},
address={Washington D.C.},
date={1964},
review={\MR{0167642}}
}

\bib{APR2019}{article}{
title={Characterizations of a limiting class $B_{\infty}$ of B\'ekoll\'e-Bonami weights},
author={A. Aleman},
author={S. Pott},
author={M. C. Reguera},
journal={Rev. Mat. Iberoam.},
volume={35},
date={2019},
number={6},
pages={1677--1692},
review={\MR{4029779}}
}


\bib{ARS2006}{article}{
title={Carleson measures and interpolating sequences for Besov spaces on complex balls},
author={N. Arcozzi},
author={R. Rochberg},
author={E. Sawyer},
journal={Mem. Amer. Math. Soc.},
volume={182},
date={2006},
number={859},
pages={vi+163},
review={\MR{2229732}}
}

\bib{AP2015}{article}{
   author={H. Arroussi},
   author={J. Pau},
   title={Reproducing kernel estimates, bounded projections and duality on
   large weighted Bergman spaces},
   journal={J. Geom. Anal.},
   volume={25},
   date={2015},
   number={4},
   pages={2284--2312},
   issn={1050-6926},
   review={\MR{3427125}},
   doi={10.1007/s12220-014-9513-2},
}

\bib{AZ1998}{article}{
    title={Compact operators via the Berezin transform},
    author={S. Axler},
    author={D. Zheng},
    journal={Indiana Univ. Math. J.},
    volume={47},
    date={1998},
    number={2},
    pages={387--400},
    review={\MR{1647896}},
}

\bib{B198182}{article}{
title={In\'egalit\'e \`a poids pour le projecteur de Bergman dans la boule unit\'e de $\mathbb{C}^n$},
author={D. B\'ekoll\'e},
journal={Studia Math.},
volume={71},
date={1981/82},
number={3},
pages={305--323},
review={\MR{0667319}}
}

\bib{BB1978}{article}{
title={In\'egalit\'es \`a poids pour le noyau de Bergman},
author={D. B\'ekoll\`e},
author={A. Bonami},
journal={C. R. Acad. Sci. Paris S\'er. A-B},
volume={286},
date={1978},
number={18},
pages={A775--A778},
review={\MR{0497663}}
}

\bib{BerghLofstrom}{book}{
title={Interpolation spaces. An introduction.},
author={J. Bergh},
author={J. L\"ofstr\"om},
series={Grundlehren der Mathematischen Wissenschaften},
publisher={Springer-Verlag},
address={Berlin-New York},
volume={223},
date={1976},
review={\MR{0482275}}
}

\bib{B1987}{article}{
title={Extension of Kerzman's theorem on differentiability of the Bergman kernel function},
author={H. P. Boas},
journal={Indiana Univ. Math. J.},
volume={36},
date={1987},
number={3},
pages={495--499},
review={\MR{0905608}}
}

\bib{Ch2013}{article}{
   author={G. R. Chac\'{o}n},
   title={Toeplitz operators on weighted Bergman spaces},
   journal={J. Funct. Spaces Appl.},
   date={2013},
   pages={Art. ID 753153, 5},
   issn={0972-6802},
   review={\MR{3111833}},
   doi={10.1155/2013/753153},
}

\bib{CP1998}{article}{
    author={F. Cobos},
    author={L. E. Persson},
    title={Real interpolation of compact operators between quasi-Banach spaces},
    journal={Math. Scand.},
    volume={82},
    date={1998},
    number={1},
    pages={138--160},
    review={\MR{1634592}}
}

\bib{Co2010}{article}{
   author={O. Constantin},
   title={Carleson embeddings and some classes of operators on weighted
   Bergman spaces},
   journal={J. Math. Anal. Appl.},
   volume={365},
   date={2010},
   number={2},
   pages={668--682},
   issn={0022-247X},
   review={\MR{2587070}},
   doi={10.1016/j.jmaa.2009.11.035},
}

\bib{CUMP2011}{book}{
title={Weights, extrapolation and the theory of Rubio de Francia},
author={D. V. Cruz-Uribe},
author={J. M. Martell},
author={C. P\'erez},
series={Operator Theory: Advances and Applications},
volume={215},
publisher={Birkhäuser/Springer Basel},
date={2011},
review={\MR{2797562}}
}

\bib{CM2006}{article}{
   author={\v{Z}. \v{C}u\v{c}kovi\'{c}},
   author={J. D. McNeal},
   title={Special Toeplitz operators on strongly pseudoconvex domains},
   journal={Rev. Mat. Iberoam.},
   volume={22},
   date={2006},
   number={3},
   pages={851--866},
   issn={0213-2230},
   review={\MR{2320404}},
   doi={10.4171/RMI/476},
}

\bib{CSZ2018}{article}{
    title={A local weighted Axler-Zheng theorem in $\mathbb{C}^n$},
    author={\v{Z}. \v{C}u\v{c}kovi\'{c}},
    author={\c{S}. Sahuto\u{g}lu},
    author={Y. E. Zeytuncu},
    journal={Pacific J. Math.},
    volume={294},
    date={2018},
    number={1},
    pages={89--106},
    review={\MR{3743367}}
}

\bib{CK1995}{article}{
title={Interpolation of compact operators by the methods of Calder\'on and Gustavsson-Peetre},
author={M. Cwikel},
author={N. J. Kalton},
journal={Proc. Edinburgh Math. Soc. (2)},
volume={38},
date={1995},
number={2},
pages={261--276},
review={\MR{1335873}}
}

\bib{DHZZ2001}{article}{
title={Bergman projection and Bergman spaces},
author={Y. Deng},
author={L. Huang},
author={T. Zhao},
author={D. Zheng},
journal={J. Operator Theory},
volume={46},
date={2001},
number={1},
pages={3--24},
review={\MR{1862176}}
}

\bib{DsLR2016}{article}{
title={Borderline weak-type estimates for singular integrals and square functions},
author={C. Domingo-Salazar},
author={M. Lacey},
author={G. Rey},
journal={Bull. Lond. Math. Soc.},
volume={48},
date={2016},
number={1},
pages={63--73},
review={\MR{3455749}}
}


\bib{E1999}{article}{
   author={M. Engli\v{s}},
   title={Compact Toeplitz operators via the Berezin transform on bounded
   symmetric domains},
   journal={Integral Equations Operator Theory},
   volume={33},
   date={1999},
   number={4},
   pages={426--455},
   issn={0378-620X},
   review={\MR{1682815}},
   doi={10.1007/BF01291836},
}

\bib{HWW20201}{article}{
title={A B\'ekoll\`e-Bonami Class of Weights for Certain Pseudoconvex Domains},
author={Z. Huo},
author={N. A. Wagner},
author={B. D. Wick},
date={2021},
journal={J. Geom. Anal.},
volume={31},
number={6},
pages={6042--6066},
review={\MR{4267636}}
}

\bib{HWW20202}{article}{
title={Bekoll\'{e}-Bonami estimates on some pseudoconvex domains},
author={Z. Huo},
author={N. A. Wagner},
author={B. D. Wick},
journal={Bull. Sci. Math.},
volume={170},
date={2021},
review={\MR{4263006}}
}


\bib{HL2020}{article}{
title={Extrapolation of compactness on weighted spaces},
author={T. P. Hyt\"onen},
author={S. Lappas},
date={2023},
volume={39},
journal={Rev. Mat. Iberoam.},
number={1},
pages={91--122},
review={\MR{4571600}}
}



\bib{HPR2012}{article}{
title={Sharp reverse H\"older property for $A_{\infty}$ weights on spaces of homogeneous type},
author={T. Hyt\"onen},
author={C. P\'erez},
author={E. Rela},
journal={J. Funct. Anal.},
volume={263},
date={2012},
number={12},
pages={3883--3899},
review={\MR{2990061}}
}

\bib{IMW2015}{article}{
title={Localization and compactness in Bergman and Fock spaces},
author={J. Isralowitz},
author={M. Mitkovski},
author={B. D. Wick},
journal={Indiana Univ. Math. J.},
volume={64},
date={2015},
number={5},
pages={1553--1573},
review={\MR{3418451}}
}

\bib{LS2004}{article}{
   author={Lanzani, Loredana},
   author={Stein, Elias M.},
   title={Szeg\"{o} and Bergman projections on non-smooth planar domains},
   journal={J. Geom. Anal.},
   volume={14},
   date={2004},
   number={1},
   pages={63--86},
   issn={1050-6926},
   review={\MR{2030575}},
   doi={10.1007/BF02921866},
}

\bib{M1994}{article}{
title={The Bergman projection as a singular integral operator},
author={J. D. McNeal},
journal={J. Geom. Anal.},
volume={4},
date={1994},
number={1},
pages={91--103},
review={\MR{1274139}}
}

\bib{MSWW2021}{article}{
title={Riesz-Kolmogorov type compactness criteria in function spaces with applications},
author={M. Mitkovski},
author={C. B. Stockdale},
author={N. A. Wagner},
author={B. D. Wick},
journal={Complex Anal. Oper. Theory},
volume={17},
date={2023},
number={3},
pages={Paper No. 40},
review={\MR{4569081}}
}

\bib{MSW2013}{article}{
title={The essential norm of operators on $A_a^p(\mathbb{B}_n)$},
author={M. Mitkovski},
author={D. Su\'arez},
author={B. D. Wick},
journal={Integral Equations Operator Theory},
volume={75},
date={2013},
number={2},
pages={197--233},
review={\MR{3008923}}
}




\bib{PR2013}{article}{
title={Sharp B\'ekoll\'e estimates for the Bergman projection},
author={S. Pott},
author={M. C. Reguera},
journal={J. Funct. Anal.},
volume={265},
date={2013},
number={12},
pages={3233–3244},
review={\MR{3110501}}
}



\bib{RTW2017}{article}{
title={Weighted estimates for the Berezin transform and Bergman projection on the unit ball},
author={R. Rahm},
author={E. Tchoundja},
author={B. D. Wick},
journal={Math. Z.},
volume={286},
date={2017},
number={3-4},
pages={1465--1478},
review={\MR{3671584}}
}


\bib{RdF1984}{article}{
    title={Factorization theory of $A_p$ weights},
    author={J. L. Rubio de Francia},
    journal={Amer. J. Math.},
    volume={106},
    date={1984},
    number={3},
    pages={533--547},
    review={\MR{0745140}}
}

\bib{Ru1980}{book}{
   author={W. Rudin},
   title={Function theory in the unit ball of ${\bf C}^{n}$},
   series={Grundlehren der Mathematischen Wissenschaften [Fundamental
   Principles of Mathematical Sciences]},
   volume={241},
   publisher={Springer-Verlag, New York-Berlin},
   date={1980},
   pages={xiii+436},
   isbn={0-387-90514-6},
   review={\MR{601594}},
}

\bib{SVW2019}{article}{
title={Sparse domination results for compactness on weighted spaces},
author={C. B. Stockdale},
author={P. Villarroya},
author={B. D. Wick},
journal={Collect. Math.},
date={2022},
volume={73},
number={3},
pages={535--563},
review={\MR{4467913}}
}

\bib{SW2020}{article}{
title={Weighted endpoint bounds for the Bergman and Cauchy-Szeg\H o projections on domains with near minimal smoothness},
author={C. B. Stockdale},
author={N. A. Wagner},
journal={Indiana Univ. Math. J.},
date={2022},
volume={71},
number={5},
pages={2099--2125},
review={\MR{4509829}}
}


\bib{S2007}{article}{
title={The essential norm of operators in the Toeplitz Algebra on $A^p(\mathbb{B}_n)$},
author={D. Su\'arez},
journal={Indiana Univ. Math. J.},
volume={56},
date={2007},
number={5},
pages={2185--2232},
review={\MR{2360608}}
}

\bib{WW2020}{article}{
title={Weighted $L^p$ estimates for the Bergman and Szeg\H{o} projections on strongly pseudoconvex domains with near minimal smoothness},
author={N. A. Wagner},
author={B. D. Wick},
journal={Adv. Math.},
volume={384},
date={2021},
review={\MR{4246099}}
}

\bib{WX2021}{article}{
author={Y. Wang},
   author={J. Xia},
   title={Essential commutants on strongly pseudo-convex domains},
   journal={J. Funct. Anal.},
   volume={280},
   date={2021},
   number={1},
   pages={Paper No. 108775, 56 pp.},
   review={\MR{4157673}}
}


\bib{Z2020}{article}{
title={A survey of the $L^p$ regularity of the Bergman projection},
author={Y. E. Zeytuncu},
journal={Complex Anal. Synerg.},
volume={6},
date={2020},
number={2},
pages={Paper No. 19, 7 pp.},
review={\MR{4109791}}
}

\bib{Zhu1}{book}{
   author={K. Zhu},
   title={Operator theory in function spaces},
   series={Mathematical Surveys and Monographs},
   volume={138},
   edition={2},
   publisher={American Mathematical Society, Providence, RI},
   date={2007},
   pages={xvi+348},
   isbn={978-0-8218-3965-2},
   review={\MR{2311536}},
   doi={10.1090/surv/138},
}

\end{biblist}
\end{bibdiv}

\end{document}